\renewcommand{\(}{\left(}
\renewcommand{\)}{\right)}
\newcommand{\E}{\mathcal{E}}
\renewcommand{\L}{\mathcal{L}}
\newcommand{\R}{\mathcal{R}}
\newcommand{\Ord}{\mathcal{O}}
\renewcommand{\P}{\mathcal{P}}
\newcommand{\IN}{\mathbb{N}}
\newcommand{\IR}{\mathbb{R}}
\newcommand{\nti}{n \to \infty}
\newcommand{\bs}{\boldsymbol}
\newcommand{\True}{\mathtt{True}}
\newcommand{\False}{\mathtt{False}}
\newenvironment{ex}[0]{\vspace{3mm}\noindent \textbf{Example: }}{\vspace{3mm}}
\newenvironment{rmq}[0]{\vspace{3mm}\noindent \textbf{Remark: }}{\vspace{3mm}}
\newcommand{\mc}[1]{\ensuremath{\mathcal{#1}}}
\def\indi{\mbox{\hspace{0.2em}l\hspace{-0.55em}1}}
\newtheorem{thm}{Theorem}[section]
\newtheorem{prop}[thm]{Proposition}
\newtheorem{df}[thm]{Definition}
\newtheorem{lem}[thm]{Lemma}
\newcommand{\brem}{\begin{rmq}}
\newcommand{\erem}{\end{rmq}}
\title[tree size complexity and probability for Boolean functions]{The relation between tree size complexity and probability for Boolean functions
generated by uniform random trees$^\ast$}\thanks{$^\ast$) This research was partially supported by the P.H.C. AMADEUS project 29281NE, the \"OAD project
F03/2013 and the FWF grant SFB F50-03.}
\author{Veronika Daxner}
\author[Antoine Genitrini]{Antoine Genitrini$^\dag$}
\thanks{$^\dag$ Sorbonne Universit\'es, UPMC Univ Paris 06, CNRS, LIP6 UMR 7606, 4 place Jussieu 75005 Paris.
\url{Antoine.Genitrini@lip6.fr}}
\author[Bernhard Gittenberger]{Bernhard Gittenberger$^\ddag$}
\thanks{$^\ddag$ Technische Universit\"at
Wien, Wiedner Hauptstrasse 8-10/104, A-1040 Wien, Austria.
\url{gittenberger@dmg.tuwien.ac.at}}
\author[C\'ecile Mailler]{C\'ecile Mailler$^\P$}
\thanks{$^\P$ Department of Mathematical Sciences, University of Bath, BA2 7AY Bath, UK.
\url{c.mailler@bath.ac.uk}}
\begin{document}

\maketitle

\begin{abstract}
We consider a probability distribution on the set of Boolean functions in $n$ variables which is
induced by random Boolean expressions. Such an expression is a random rooted plane tree
where the internal vertices are labelled with connectives AND or OR 
and the leaves are labelled with variables or negated variables. We study the limiting
distribution when the tree size tends to infinity and derive a relation between the tree size
complexity and the probability of a function. This is done by first expressing trees representing
a particular function as expansions of minimal trees representing this function and then computing
the probabilities by means of combinatorial counting arguments relying on generating functions and
singularity analysis. 
\end{abstract}

\keywords{
Boolean functions; Probability distribution;  Random Boolean formulas;
Tree size complexity; Formula size complexity; Analytic combinatorics.
}

\section{Introduction}

A Boolean function of $n$ variables is a mapping from $\{\True,\False\}^n$ onto $\{\True,\False\}$.
The interest in these objects dates back to the 1940ies when Riordan and Shannon
\cite{RS42,Sh49} discovered the so-called Shannon effect: the uniformly random $n$ variables Boolean function
has asymptotically almost surely exponential {\it complexity} when $n$ goes to infinity. 
Since then numerous papers have been devoted to developing a better
understanding of various aspects of Boolean functions. Concerning random Boolean functions and the
Shannon effect, further investigations were carried out by Lupanov~\cite{lupanov62, lupanov65} and
a proof based on simple combinatorial counting arguments is presented in Flajolet and Sedgewick's 
book~\cite{FlSe}. All these results concern the uniform probability distribution on the set of
Boolean functions in $n$ variables. 

In the last two decades people became interested in non-uniform distributions. 
A natural way to pick a Boolean function at random is to pick a Boolean formula 
at random and look at the function it represents.
It is convenient to see Boolean formulas as rooted plane trees whose 
internal nodes are labelled by logical connectives like $\land$, $\lor$ or $\implies$ 
(being respectively the conjunction, disjunction and implication operators) and
whose leaves are labelled by (possibly negated) variables $\{x_1, \ldots, x_n\}$ (where $n$ is a fixed integer).

The first efforts in generating a random Boolean function via a random Boolean formula/tree 
go back to Paris et al.~\cite{PVW94} and Lefmann and Savick{\'y}~\cite{LS97} 
where the authors pick a tree uniformly at random among all binary and/or 
(meaning that only the connectives $\land$ and $\lor$ are allowed) trees 
having $m$ leaves.
Lefmann and Savick\'y \cite{LS97} showed the existence of
a limiting distribution when the size $m$ of the tree (i.e. its number of leaves) 
tends to infinity and also bounded the probability of a Boolean function in terms of its complexity, 
the complexity of a Boolean function being the size of the smallest trees representing it. 
The existence of a limit distribution when $m$ goes to infinity
was shown independently for non binary trees by Woods~\cite{W97}. 
Woods' method is basically a special case of a more general
result, the Drmota-Lalley-Woods theorem originating in the works \cite{D97, L93, W97}. See
Flajolet and Sedgewick~\cite{FlSe} for an easy accessible formulation, 
Drmota~\cite{Dr09} for a detailed discussion, and
Fournier et al.~\cite{FGGG12} for an application in the context of Boolean formulas. 
The bounds of Lefmann and Savick\'y were later refined in Chauvin et al.\cite{CFGG04} 
by Analytic Combinatorics' methods. 
A survey of this topic was written by Gardy~\cite{G06}. 

Different but somehow related problems in the framework of balanced trees have been pursued by
Valiant~\cite{valiant} who wanted to generate a particular Boolean function with high probability.
His results were extended in Boppana~\cite{boppana85} and Gupta and Mahajan~\cite{gm97}. 
The existence of a limit distribution for
balanced and/or trees can be found in Fournier, Gardy and Genitrini~\cite{FGG09}.
The influence of different connectives was studied in Savick{\'y}~\cite{savicky90} and 
Brodsky and Pippenger~\cite{bp05}, but under different distributions. 

The study of the relation between the probability of a given Boolean function and its complexity under models
similar to the Lefmann-Savick{\'y}'s model was resumed recently by Kozik~\cite{kozik08} 
(and by Fournier et al.~\cite{FGGG08,FGGG12} for the implicational model) 
who was able to obtain asymptotic equivalents for the probability of a fixed Boolean function~$f$.
More precisely, they prove that if $p_{n,m}(f)$ is the probability that a uniform binary and/or tree of size~$m$ (and on~$n$ variables)
calculates $f$, then
\begin{equation}\label{eq:kozik}
\lim_{m\to+\infty} p_{n,m}(f) =: p_n(f) = \Theta\left(\frac1{n^{\ell(f)+1}}\right), \text{ when } n\to+\infty
\end{equation}
where $\ell(f)$ is the complexity of the Boolean function $f$.

The drawback of the models based on plane binary trees is that basic algebraic properties like
commutativity and associativity of the connectives are not incorporated into the model. The papers
\cite{GGKM15,GGKM12} extend the binary plane models to more general tree classes.
For instance, due to the associativity of the connectives $\land$ and $\lor$, 
it is very natural to consider non binary trees, but in that case there is no 
justification why the size of an and/or tree should be its number of leaves  
(as considered in~\cite{GGKM15})
and not its number of internal nodes or its total number of nodes.
And since the complexity of a Boolean function is defined as the size of the smallest trees calculating this function,
changing the notion of size changes the notion of complexity.

In computer science, the \emph{formula size complexity} (often called formula complexity or only
complexity) is an important quantity in the investigation of Boolean functions. When looking at
the tree representation of a Boolean formula, the formula size is the number of leaves of the tree.  
It is the size notion and thus the complexity notion used in the and/or trees literature mentioned 
above~\cite{PVW94, LS97, CFGG04, kozik08}.
This notion of size is natural in the binary connectives context (i.e. with gates of fanin~2)
because, in that context, the number of connectives is the number of leaves shifted by~1.
However, when one turns to Boolean circuits, and consequently to (computational) complexity theory questions
(see e.g. Graham, Gr\"{o}tschel and Lov\'{a}sz~\cite[Chapter~40]{GGL95}), the natural notion of size in this context
is the number of connectives (or gates).

In this article, we consider non binary and/or trees. The leaves are not shared like in circuits, 
and thus both their number and the number of connectives
are important when defining the size of a Boolean function.
Moreover, from a computer science point of view, the size of the storage is a function of the number of all vertices.
We thus define the {\it tree size} of a Boolean formula as the total number of nodes of its tree representation.
The complexity of Boolean functions associated to this new notion of size is called the {\it tree size complexity},
as opposed to the {\it formula size complexity} used in the literature.

In this article, we show that the typical uniform and/or tree of size $m$ changes drastically when changing the size notion.
This comes from the two successive limits taken in our set-up and in the literature: in Equation~\eqref{eq:kozik},
the size of the trees first goes to infinity and then, we let the number $n$ of variables labelling the trees go to infinity.
The typical and/or tree of size $m$ in the tree size model depends on the number of variables $n$:
the larger $n$ is, the more likely it is for the typical tree to have many leaves,
whereas in the formula size model, the shape of the typical tree of size $m$ does not depend on $n$.
This difference between the two models is the reason why we need to develop a whole new approach in this article since 
this difference of typical shapes make the proofs developed in the formula size literature collapse.

However, it eventually turns out that the two distributions induced on the set of Boolean functions 
by the tree size model and the formula size model very well fall under the same paradigms.
As in the formula size model (see Equation~\eqref{eq:kozik}), we can prove that there is a strong
relation between the (tree size) complexity and the probability of a function.
More precisely, if we pick uniformly at random an and/or tree of tree size $m$ (labelled on $n$ variables) and 
denote by $\mathbb P_{m,n}(f)$ the probability that this tree represents the Boolean function $f$, 
then we prove in this article that
\begin{equation}\label{eq:kozik_our}
\lim_{m\to+\infty} \mathbb P_{m,n}(f)=: \mathbb P_n(f) = \Theta\left(\frac1{n^{L(f)}}\right),
\end{equation}
where $L(f)$ is the tree size complexity.
This result is very close to Equation~\eqref{eq:kozik}, but the difference in the exponent of $n$
actually makes a clear difference, namely that in our model, the asymptotic probability of the two constant functions
$\True$ and $\False$ (which have complexity~$0$) is of constant order while it was tending to zero with $n$ in the formula size model.
This immediately implies that the tree size model does not exhibit the Shannon effect,
whereas proving that the formula size model does not exhibit the Shannon effect is proved but far from trivial 
(see~\cite{GG10,GGKM12,GGM14}).

\vspace{\baselineskip}
The present paper is organized as follows: In the next section we introduce the model and present
our main result. In Section~\ref{part:existence} we introduce the combinatorial setting
(generating functions for the basic tree classes) and briefly discusses the existence of the
limit distribution $\mathbb P_n$ (see Equation~\eqref{eq:kozik_our}). 
In Section~\ref{part:misc} is devoted to proving key properties of our model:
In particular, we investigate in this part the typical shape of the random uniform and/or tree of size $m$, 
when $m$ and then $n$ tend to infinity. This section shows how different the typical tree is our model
and the typical tree in the formula size model are different.
In Section~\ref{part:useful} we study a
particular subfamily of trees which will be used as an auxiliary structure in
Sections~\ref{part:tauto} and~\ref{part:general}. 
In Section~\ref{part:tauto}, we prove that the probability of the two constant 
functions is of constant order when $n$ tends to infinity, meaning that we prove Equation~\eqref{eq:kozik_our}
in the special cases $f=\True$ and $f=\False$. 
A fixed Boolean function divides the Boolean lattice into
larger and smaller functions (as well as non-comparable functions). 
This subdivision is quantified in Section~\ref{part:larger} and the
result will help us to estimate the limiting probability of literal functions, the simplest
non-constant functions, in Section~\ref{part:simple_x}. 
We finally have all ingredients needed to prove our main result (i.e. Equation~\eqref{eq:kozik_our}), 
which we do in Section~\ref{part:general}.

\section{Model and results}
\begin{df}
\label{df:assoctree}
An {\bf associative tree} is a rooted plane tree whose nodes have arity in $\mathbb{N}\setminus
\{1\}$, and such that each internal node is labelled by a connector AND (denoted by $\land$ in the
following) or by a connector OR (denoted by $\vee$ in the following) such that two identical
connectives cannot be neighbours (trees are {\bf stratified}), and where each leaf is labelled by
a literal taken from $\{x_1,\bar{x}_1,\ldots,x_n,\bar{x}_n\}$ (see Figure~\ref{fig:example} for an
example). 
\end{df}

\begin{figure}
\begin{center}
\fbox{\includegraphics[width=0.5\textwidth]{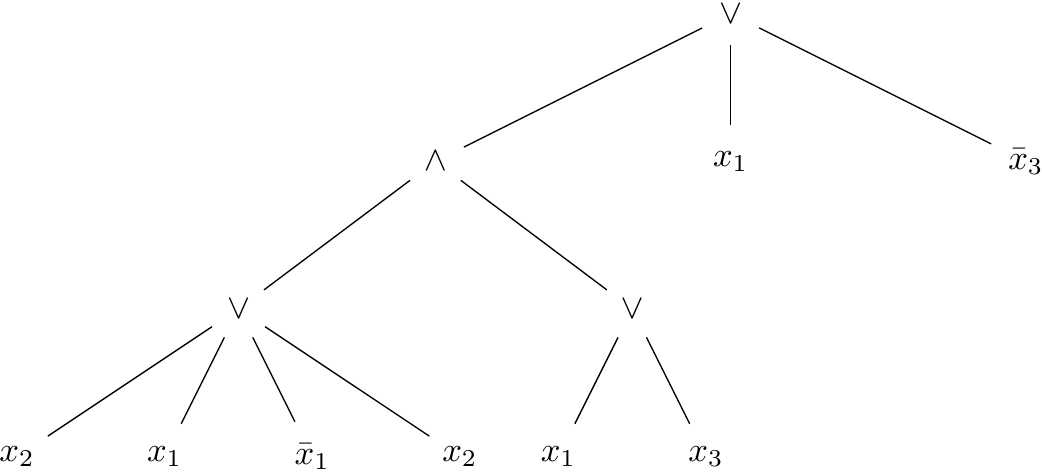}}
\end{center}
\caption{An associative tree which represents the constant function $\True$.}
\label{fig:example}
\end{figure}

By interpreting $\bar x_i$ as the negation of $x_i$, 
every such tree represents a Boolean formula (or expression) and therefore calculates a Boolean
function of $n$ variables. We denote by $\mathcal{F}_n$ the set of such Boolean functions.

\begin{df}
\label{df:complexity}
The {\bf size} $|t|$ of an associative tree $t$ is the number of its nodes (internal nodes and leaves).
We denote by $\mathcal{A}_{m,n}$ the set of associative trees of size $m$, and by $A_{m,n}$ the
cardinality of this set. Let $f\in \mathcal{F}_n$ be a Boolean function, its {\bf complexity} is
\begin{itemize}
\item 0 if $f$ is the function $\True : (x_1,\ldots,x_n)\mapsto \True$ or $\False : (x_1,\ldots,x_n)\mapsto \False$;
\item 2 if $f$ is a literal function, \emph{i.e.}\ there exist $x_i\in\{x_1,\ldots,x_n\}$ such that $f : (x_1,\ldots,x_n)\mapsto x_i$ or $f : (x_1,\ldots,x_n)\mapsto \bar{x}_i$;
\item the size of the smallest trees computing~$f$ if $f$ is neither a literal function nor a constant function. These trees of size $L(f)$ computing $f$ are called {\bf minimal trees} of $f$ and their set is denoted by $\mathcal{M}_f$.
\end{itemize}
\end{df}

\begin{ex}
The tree pictured in Figure~\ref{fig:example} has size 11.
The function $(x_1, \ldots, x_n)\mapsto x_1\text{ \sc xor }x_2 = (\bar{x}_1\land x_2) \lor (x_1\land \bar{x}_2)$ has complexity $7$.
\end{ex}

As already mentioned in the introduction, this notion of size differs from the notion used in the literature about and/or tree, 
where the size of a tree is the number of its leaves. Changing the notion of size also changes the notion of complexity of Boolean functions.
In all discussions, we will call the complexity defined in Definition~\ref{df:complexity} the {\it tree size} complexity, as opposed to the
{\it formula size complexity} of the literature.
As discussed in the introduction, this alternative definition is at least as natural and it is an interesting question to know how this change of definition will impact the distribution induced on the set of Boolean functions by the uniform distribution on the set of and/or trees of size $m$ and labelled on $n$ variables.

\begin{rmq}
The literal functions will be treated separately in the whole paper. But they could in fact be
treated as any other non-constant function by considering that its two minimal trees are the tree
rooted by an $\land$ with a single child labelled by $x_i$ and the tree rooted by an $\lor$ with a
single child labelled by $x_i$, even if those two trees are not associative trees according to
Definition~\ref{df:assoctree}!
\end{rmq}

\begin{df}
For all Boolean function $f$, we denote by $\mathbb{P}_{m,n}(f)$ the proportion of trees calculating $f$ among all trees of size $m$.
\end{df}

The aim of the present paper is to prove the following result, which sums up the asymptotic behaviour of this distribution $\mathbb{P}_{m,n}$ when the size $m$ of the considered tree tends to infinity.

\begin{thm}
\label{thm:general}
For all Boolean function $f$, the following limit exists and is positive 
\[\mathbb{P}_n(f) = \lim_{m\to\infty} \mathbb{P}_{m,n}(f).\]
Moreover, 
\begin{itemize}
\item There exist two constants $\alpha$ and $\beta$ such that, for all integer $n$,
\[0< \alpha \leq
\mathbb{P}_n(\True) = \mathbb{P}_n(\False) \leq\beta < \frac12.\]
\item If $f$ is a literal function, then
\[\mathbb{P}_n(f) = \Theta\left(\frac1{n^2}\right)\text{ when }n\to+\infty.\]
\item For all Boolean function $f$ such that $L(f)\geq 3$,
\[\mathbb{P}_n(f) = \Theta\left(\frac1{n^{L(f)}}\right)\text{ when }n\to+\infty.\]
\end{itemize}
\end{thm}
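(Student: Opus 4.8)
The plan is to prove all three regimes of Theorem~\ref{thm:general} through a common mechanism: every associative tree computing $f$ is obtained by ``expanding'' a minimal tree of $f$, and the generating function of these expansions can be analysed by singularity analysis to extract the exponent of $n$ in the limiting probability. First I would establish the existence and positivity of $\mathbb P_n(f)=\lim_{m\to\infty}\mathbb P_{m,n}(f)$; this follows from the combinatorial setting of Section~\ref{part:existence} via a Drmota--Lalley--Woods type argument, since the bivariate generating function $A(x)=\sum_{m} A_{m,n} x^m$ (with $n$ fixed) and the corresponding series $A_f(x)$ counting trees that compute $f$ both have the same dominant singularity $\rho=\rho(n)$ with square-root behaviour, so that $\mathbb P_n(f)$ is a ratio of the associated singular coefficients, which is strictly positive because $\M_f\neq\emptyset$ and every minimal tree admits an infinite family of expansions. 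The three bullets then split according to $L(f)=0$, $f$ a literal ($L(f)=2$), and $L(f)\geq 3$.

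For the constant functions, the strategy is to show $\mathbb P_n(\True)$ is bounded away from $0$ and from $\frac12$ uniformly in $n$; this is exactly the content flagged for Section~\ref{part:tauto}, and the key point is that a tautology can be built from a constant-size ``core'' (e.g.\ $x_i\vee\bar x_i$) grafted into an arbitrary large context, giving a lower bound of constant order, while the upper bound $\mathbb P_n(\True)=\mathbb P_n(\False)<\frac12$ follows from the symmetry $f\mapsto\neg f$ combined with the observation that not every tree is constant. For a literal function $f=x_i$, I would use the subfamily of trees studied in Section~\ref{part:useful} together with the subdivision of the Boolean lattice from Section~\ref{part:larger}: a tree computes $x_i$ essentially when it is an expansion of one of the two ``degenerate'' minimal trees (the unary $\land$- or $\vee$-node over $x_i$), and counting such expansions introduces a factor $1/n$ for each of the two occurrences of the literal $x_i$ that must be ``pinned down'' against the $2n$ available literals, yielding $\Theta(1/n^2)$; the matching lower and upper bounds come from the lattice estimates controlling the contribution of trees strictly above or below $x_i$.

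For a general $f$ with $L(f)\geq 3$, the heart of the argument is the \emph{expansion} construction: I would show that any associative tree computing $f$ can be uniquely decomposed as a minimal tree $t_0\in\M_f$ (with $L(f)$ nodes, hence $L(f)$ ``slots'' up to the stratification constraints) together with a collection of trees substituted at or appended to its nodes in a way that does not change the computed function. Passing to generating functions, this makes $A_f(x)$ comparable to $|\M_f|$ times a product over the $L(f)$ nodes of $t_0$ of factors each of which, after singularity analysis at $\rho(n)$ and after tracking the number of literal choices, contributes one power of $1/n$; summing over the finitely many $t_0\in\M_f$ gives $\mathbb P_n(f)=\Theta(1/n^{L(f)})$. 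The upper bound requires showing that \emph{every} tree computing $f$ arises this way (no tree computes $f$ ``more cheaply'' than through a minimal tree), which is where the definition of tree size complexity is used essentially.

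The main obstacle I expect is precisely this last point: controlling the upper bound, i.e.\ proving that the expansions of minimal trees account for \emph{all} of the probability mass (up to constants) and that there is no accumulation of non-minimal but still ``efficient'' trees that would inflate the exponent. Concretely, one must rule out that a function $f$ is computed with overwhelming probability by trees whose structure is not an expansion of any $t_0\in\M_f$ but which are nonetheless numerous enough to dominate; this is the reason the authors remark that the formula-size proofs ``collapse'' here, because the typical tree in the tree-size model has many leaves (its shape depends on $n$), so the pruning/reduction lemmas must be redone. I would handle this by a reduction argument: iteratively simplify any tree computing $f$ (absorbing redundant subtrees, collapsing same-connective neighbours forced by the reduction) until a minimal tree is reached, and bound the number of trees reducing to a fixed $t_0$ by a generating-function estimate whose singular coefficient at $\rho(n)$ carries exactly the claimed power of $n$; the careful bookkeeping of the literal labels under this reduction, and the uniformity of all error terms in $n$, is the technically delicate part.
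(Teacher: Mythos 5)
Your overall architecture matches the paper's: existence via Drmota--Lalley--Woods, the lower bound for general $f$ via tautology expansions of minimal trees (each of the $L(f)$ nodes of the minimal tree contributing a factor $\rho\sim\nicefrac1{2n}$), and the upper bound via a reduction of arbitrary trees computing $f$ to minimal or ``irreducible'' ones whose expansions are shown to be negligible. You also correctly identify the upper bound as the hard part. However, there are two concrete gaps in the constant-function case, which is not a side issue here: the entire general case rests on $\mathbb{P}_n(\True)=\Theta(1)$, because the expansion generating function is $\asymp m_f\,\rho^{L(f)}L(f)\,T(z)$ and the exponent $L(f)$ (rather than $L(f)+1$ as in the formula-size model) survives only because tautologies themselves carry constant mass.

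First, your mechanism for the lower bound --- ``a constant-size core (e.g.\ $x_i\vee\bar x_i$) grafted into an arbitrary large context'' --- does not produce a tautology: grafting $x_i\vee\bar x_i$ at an internal node of a generic tree does not make the whole tree compute $\True$. What is needed is that $x_i$ and $\bar x_i$ both appear as leaves \emph{on the first level} of an $\lor$-root (a ``simple tautology''), and the reason this event has probability bounded below by a constant \emph{uniformly in $n$} is precisely the feature you mention elsewhere but do not use: the root of a typical tree in this model has $\Theta(\sqrt n)$ first-level leaves, so a birthday-paradox argument on the $\lfloor\sqrt n\rfloor$ leftmost leaves yields a complementary pair with probability bounded away from $0$. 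A fixed pair of labels costs $\rho^2\asymp n^{-2}$ and naively summing over the $n$ variables requires controlling severe over-counting; without the birthday-paradox (or a Bonferroni second-moment) step the argument only delivers $\mathbb{P}_n(\True)=\Omega(\nicefrac1n)$, which would propagate to $\Omega(n^{-L(f)-1})$ in the general case and lose the theorem. Second, for $\beta<\frac12$ the symmetry $\True\leftrightarrow\False$ plus ``not every tree is constant'' gives strictness for each fixed $n$ but not a uniform constant $\beta$; one must exhibit an explicit family of provably non-constant trees whose limiting ratio is bounded below uniformly in $n$ (the paper takes $\lor$-rooted trees with $k\asymp\sqrt n$ distinct, non-clashing literals on the first level and all non-leaf subtrees contradictions, computing $\alpha_1\lor\dots\lor\alpha_k$). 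Finally, a small accounting slip: for a literal the $\Theta(n^{-2})$ does not come from ``two occurrences of $x_i$'' but from the two nodes of the degenerate minimal tree (one connective, one leaf labelled $x_i$), i.e.\ from $\rho^2$ times the constant-order mass of the tautology attached beside the leaf.
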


\begin{rmq}
Thanks to the \emph{ad hoc} definition of the complexity (\emph{cf.}
Definition~\ref{df:complexity}), the above theorem can be expressed as: 
For all Boolean function $f$,
\[\mathbb{P}_n(f) = \Theta\left(\frac1{n^{L(f)}}\right), \text{ as $n$ tends to infinity.}\]
\end{rmq}

As already mentioned in the introduction, our main result shows that changing the notion of size/complexity 
does not affect the behaviour of the induced distribution on Boolean functions: 
the main result above is very similar to Equation~\eqref{eq:kozik}, valid in the formula size complexity.
Therefore, one could expect that a slight generalisation of the proofs developed in the literature for the
formula size complexity might be enough to prove the above main result.
It might come as a surprise that generalising the formula size complexity proofs
collapse in our context due to the difference of the shapes of a typical uniform tree of size $m$ in both models 
- a glimpse of these differences will be given in Section~\ref{part:misc}, but roughly speaking, 
the number of leaves attached at the root is of order $\sqrt n$ while it did not depend on $n$ in the literature model.
Our main contribution is thus the development of a whole new approach to prove our main result.

\section{Existence of the limiting distribution $\mathbb{P}_n$}
\label{part:existence}

In this section, we prove the existence of the limiting distribution $\mathbb{P}_n$
on the set $\mathcal{F}_n$, which correspond to the first
assertion of Theorem~\ref{thm:general}. In the whole paper, we deeply use generating functions,
singularity analysis and the symbolic method. We refer the reader to~\cite{FlSe} for a
comprehensive introduction to this domain.

An associative tree can be formally described by a grammar: If $\hat{\mathcal{A}}$ (resp. $\check{\mathcal{A}}$) denotes the set of all associative trees rooted by an $\land$-connective (resp. an $\lor$-connective), $\mathcal{L}$ the set of literals and $\mathcal{C} = \{\land\}$, then $\hat{\mathcal{A}} =\mathcal{L} + \mathcal{C} \times \mbox{\tt seq}_{\geq 2}(\check{\mathcal{A}})$. Let us denote by $\hat{A}(z)$ (resp. $\check{A}(z)$) the generating function of associative trees rooted by an $\land$-connective (resp. an $\lor$-connective). The grammar can be translated into the functional equation
\[\hat{A}(z) = 2nz + z \cdot \frac{\check{A}(z)^2}{1-\check{A}(z)}.\]
Since, by symmetry, $\check{A}(z)= \hat{A}(z)$, we get 
\begin{equation}
\label{FG:A}
\hat{A}(z) = \frac{2nz+1 - \sqrt{(4n^2-8n)z^2-4nz+1}}{2(z+1)}.
\end{equation}
If we denote by $A(z)=\sum_{m\geq 0}A_{m,n}z^m$ the generating function of all associative trees,
then
\[A(z)= 2\hat{A}(z) - 2nz.\]

Moreover, let $\hat{A}_f(z)$ (resp. $\check{A}_f(z)$) denote the generating function of
associative trees rooted by an $\land$ (resp. an~$\lor$) connective and computing the Boolean
function $f$. Using the symbolic method, we get 
\begin{align*}
\hat{A}_f(z) 
&= z\cdot \indi_{f\text{lit}} + z\cdot \sum_{\ell \geq 2} \sum_{g_1 \land \cdots \land g_\ell = f} \check{A}_{g_1}(z) \ldots \check{A}_{g_\ell}(z)\\
\check{A}_f(z)
&= z\cdot \indi_{f\text{lit}} + z\cdot \sum_{\ell \geq 2} \sum_{g_1 \lor \cdots \lor g_\ell = f} \hat{A}_{g_1}(z) \ldots \hat{A}_{g_\ell}(z).
\end{align*}

We thus have a system of functionnal equations to which we will apply the Drmota-Lalley-Woods. Let
us check the hypothesis of this theorem: We refer the reader to~\cite[page~489]{FlOd} for a
precise statement of the Drmota-Lalley-Woods theorem for polynomial systems. A more general form
can be found in the original work \cite{D97} and in \cite[Section~2.2.5]{Dr09}. The above system
is non-linear, it has nonnegative coefficients and it satisfies a Lipschitz condition. It is
irreducible, because for every 
function $f$ we have $f\land \True \equiv f$ and $f\lor \True \equiv \True$. Finally, note that a
tree having a $\lor$-root and two subtrees, one being only one leaf labelled by $x_1$ and the
other one a tree of arbitrary size but with all leaves labelled by $\bar x_1$, is a
tautology. Thus, for all $m\geq 3$, there exists a tree of size $m$ calculating the constant
function $\True$ and hence $\check A_{\True}(z)$ is a-periodic. This is enough to imply that all the
functions $\hat A_f(z)$ and $\check A_f(z)$ are also a-periodic. Thus we have shown that all
hypotheses of the Drmota-Lalley-Woods are true.

We can therefore infer that all the generating functions $\hat A_f(z)$, $\check A_f(z)$ and $A(z)$ 
have the same unique singularity $\rho>0$ on their (common) circle of convergence, 
it is a square-root singularity and $\hat A_f(z)$, $\check A_f(z)$ as well as $A(z)$ admit a
singular expansion of the same type (Puiseux expansion in terms of powers of $\sqrt{\rho-z}$ 
at $\rho$. Applying a transfer lemma (see \cite{FlOd}) to $A(z)$, $\hat A_f(z)$, and
$\check A_f(z)$ we can conclude that the limiting distribution $\mathbb{P}_n(f) =
\lim_{m\to\infty} \frac{[z^m](\hat A_f(z)+\check A_f(z)}{[z^m]A(z)}$ exists for all $f$, 
proving the first statement of Theorem~\ref{thm:general}

\vspace{\baselineskip}
Knowing the generating function of $\land$--rooted associative trees (given in Equation~\eqref{FG:A}), 
we can state the following proposition that will be widely used later on.
\begin{prop}
\label{prop:sing}
The singularity $\rho$ of $\hat A(z)$ satisfies, as $\nti$,
\begin{equation}\label{rho_asym}
\rho = \frac12\cdot\frac1{n+\sqrt{2n}} =
\frac1{2n}-\frac1{n\sqrt{2n}}+\mathcal{O}\left(\frac1{n^2}\right),
\end{equation}
in particular, for all large enough $n$, we have $\frac1{2n}-\frac1{2n\sqrt n}\le \rho <\frac1{2n}$. 
Moreover,
\[A(\rho) = 1-\frac1{n} + \mathcal{O}\left(\frac1{n\sqrt{n}}\right) \quad \text{ and }\quad
\hat{A}(\rho) = 1-\frac1{\sqrt{2n}} + \mathcal{O}\left(\frac1{n}\right).\]
Finally, if $\hat{B}(z)$ denotes the generating function of $\land$--rooted associative trees, then 
$\hat{B}(z) = \hat{A}(z) - 2nz$. For all integer $n$,
\[\hat{B}(\rho)<\frac1{\sqrt{2n}} \quad\text{ and } \quad
\hat{B}(\rho) = \frac1{\sqrt{2n}} + \Ord \left(\frac{1}{n}\right)\text{ when }n\to+\infty.\]
\end{prop}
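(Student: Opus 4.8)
The plan is to start from the explicit formula \eqref{FG:A} for $\hat A(z)$ and read off all the quantities from the discriminant under the square root. Write $\Delta(z) = (4n^2-8n)z^2 - 4nz + 1$. Since $\hat A$ is a square-root singularity, $\rho$ is the smallest positive root of $\Delta(z)=0$. Solving the quadratic explicitly gives
\[
\rho = \frac{4n - \sqrt{16n^2 - 4(4n^2-8n)}}{2(4n^2-8n)} = \frac{4n - \sqrt{32n}}{2(4n^2-8n)} = \frac{n - \sqrt{2n}}{2n(n-2)},
\]
and a short manipulation (multiplying numerator and denominator by $n+\sqrt{2n}$, using $(n-\sqrt{2n})(n+\sqrt{2n}) = n^2-2n = n(n-2)$) collapses this to $\rho = \dfrac{1}{2(n+\sqrt{2n})} = \dfrac12\cdot\dfrac1{n+\sqrt{2n}}$, which is the closed form in \eqref{rho_asym}. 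The asymptotic expansion then follows by expanding $\frac1{n+\sqrt{2n}} = \frac1n\cdot\frac1{1+\sqrt{2/n}} = \frac1n\bigl(1 - \sqrt{2/n} + \Ord(1/n)\bigr)$, and the two-sided bound $\frac1{2n} - \frac1{2n\sqrt n} \le \rho < \frac1{2n}$ is checked directly: the upper bound is immediate since $\sqrt{2n}>0$, and the lower bound amounts to the elementary inequality $n + \sqrt{2n} \le n + \sqrt{n} = n(1 + 1/\sqrt n)$ together with $\frac1{1+1/\sqrt n} \ge 1 - 1/\sqrt n$, valid for all $n\ge 1$.

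Next I would evaluate $A(\rho)$ and $\hat A(\rho)$. At $z=\rho$ the radical vanishes, so from \eqref{FG:A} we get $\hat A(\rho) = \dfrac{2n\rho + 1}{2(\rho+1)}$, and then $A(\rho) = 2\hat A(\rho) - 2n\rho = \dfrac{2n\rho+1}{\rho+1} - 2n\rho = \dfrac{1 - 2n\rho^2 + \text{(lower)}}{\rho+1}$; more cleanly, $A(\rho) = \dfrac{2n\rho + 1 - 2n\rho(\rho+1)}{\rho+1} = \dfrac{1 - 2n\rho^2}{\rho+1}$. Now substitute the bounds $\rho = \frac1{2n} + \Ord(n^{-3/2})$: one finds $2n\rho^2 = \Ord(1/n)$ and $\frac1{\rho+1} = 1 - \rho + \Ord(\rho^2) = 1 - \frac1{2n} + \Ord(n^{-3/2})$, so $A(\rho) = 1 - \frac1{2n} + \Ord(n^{-3/2})$. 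Here I should double-check whether the claimed constant is $1 - \frac1n$ or $1 - \frac1{2n}$; the computation should be carried through carefully, keeping the $\rho$-expansion to the needed order, since the $\sqrt{2n}$-term in $\rho$ does feed into the $1/n$-coefficient of $A(\rho)$ through the $2n\rho$ term — indeed $2n\rho = 1 - \sqrt{2/n} + \Ord(1/n)$, so expanding $\hat A(\rho) = \frac{2n\rho+1}{2(\rho+1)}$ gives $\hat A(\rho) = 1 - \frac1{\sqrt{2n}} + \Ord(1/n)$ as stated, and then $A(\rho) = 2\hat A(\rho) - 2n\rho = 2 - \frac2{\sqrt{2n}} - 1 + \frac{\sqrt2}{\sqrt n} + \Ord(1/n) = 1 + \Ord(1/n)$, with the $1/n$-term requiring one more order of expansion to pin down.

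Finally, $\hat B(z) = \hat A(z) - 2nz$ is by definition $\check A_{\text{non-lit, }\land\text{-rooted}}$, i.e. the $\land$-rooted trees minus the single leaves; so $\hat B(\rho) = \hat A(\rho) - 2n\rho = \bigl(1 - \frac1{\sqrt{2n}} + \Ord(1/n)\bigr) - \bigl(1 - \sqrt{2/n} + \Ord(1/n)\bigr) = \bigl(\sqrt2 - \tfrac1{\sqrt2}\bigr)\frac1{\sqrt n} + \Ord(1/n) = \frac1{\sqrt{2n}} + \Ord(1/n)$, giving the asymptotic claim; and the uniform bound $\hat B(\rho) < \frac1{\sqrt{2n}}$ for all $n$ would be obtained by keeping the exact expressions, writing $\hat B(\rho) = \frac{2n\rho+1}{2(\rho+1)} - 2n\rho = \frac{1 - 2n\rho(2\rho+1)}{2(\rho+1)}$ and plugging in the closed form $\rho = \frac1{2(n+\sqrt{2n})}$ to reduce the inequality to an algebraic one in $\sqrt n$ that can be verified for all $n\ge1$. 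The main obstacle is purely bookkeeping: making sure the error terms are tracked to consistent order so that the stated $1/n$- and $1/\sqrt n$-coefficients come out exactly, and being careful that the constant in the $A(\rho)$ estimate is genuinely $1 - \frac1n$ (as opposed to $1-\frac1{2n}$) — this forces the $\rho$-expansion to be carried to order $n^{-3/2}$ before substituting.
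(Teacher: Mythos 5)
Your overall route is the right one (and essentially the only one available): everything in the proposition is read off from the closed form \eqref{FG:A}, the singularity is the smaller root of the discriminant $(4n^2-8n)z^2-4nz+1$, and your simplification to $\rho=\frac1{2(n+\sqrt{2n})}$ and the expansions of $\hat A(\rho)$ and $\hat B(\rho)$ are correct. But there are two genuine problems. First, your verification of the lower bound $\frac1{2n}-\frac1{2n\sqrt n}\le\rho$ rests on the inequality $n+\sqrt{2n}\le n+\sqrt n$, which is false ($\sqrt{2n}>\sqrt n$). This is not a repairable slip: since $\rho=\frac1{2n}-\frac1{\sqrt2\,n^{3/2}}+\mathcal{O}(n^{-2})$ and $\frac1{\sqrt2}>\frac12$, one actually has $\rho<\frac1{2n}-\frac1{2n\sqrt n}$ for large $n$ (check $n=100$: $\rho\approx 0.004381<0.0045$), so the printed lower bound cannot be proved as stated --- it is presumably a typo for $\frac1{2n}-\frac1{n\sqrt n}$, which does hold since $\frac1{\sqrt 2}<1$. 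The fact that your "elementary verification" goes through a false inequality to a false conclusion means the check was not actually performed; you should have flagged the discrepancy with the asymptotic expansion you had just derived one line earlier.

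Second, the $A(\rho)$ estimate is left unresolved, and your first-pass answer $1-\frac1{2n}$ is wrong: in $A(\rho)=\frac{1-2n\rho^2}{1+\rho}$ the term $2n\rho^2=\frac1{2n}+\mathcal{O}(n^{-3/2})$ cannot be discarded as a mere $\mathcal{O}(1/n)$, because it contributes to the very coefficient you are computing. Keeping it, $(1-\frac1{2n}+\mathcal{O}(n^{-3/2}))(1-\frac1{2n}+\mathcal{O}(n^{-3/2}))=1-\frac1n+\mathcal{O}(n^{-3/2})$, which is the claimed value; no expansion of $\rho$ beyond $\rho=\frac1{2n}+\mathcal{O}(n^{-3/2})$ is needed here, contrary to what you suggest. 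Your sketch for the uniform bound $\hat B(\rho)<\frac1{\sqrt{2n}}$ is viable and worth completing: with $s=\sqrt{2n}$ one gets $\rho=\frac1{s(s+2)}$, $\hat B(\rho)=\frac{s}{(s+1)(s+2)}$, and the claim reduces to $s^2<(s+1)(s+2)$, true for all $s>0$.
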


\section{Miscellaneous properties of the model.}
\label{part:misc}

In this section, we prove several propositions that lead to a better understanding of the model
and that will be useful throughout the paper. We prove among other results that the expected
number of leaves on the first level of a large associative tree behaves like $\sqrt{n}$, and that
there are few trees with no leaves on the first level. All these results are really surprising in
view of the formula size models of the literature where the number of leaves attached to the root of a typical tree of large size
does not depend on the number $n$ of variables.
Therefore, in the present model, it is quite likely to have a literal 
and its negation appearing as labels of two leaves attached to the root, 
making the whole tree calculate one of the two constant functions, 
whereas this configuration is very unlikely in the formula size model (see~\cite{GGKM15}).

\vspace{\baselineskip}
First of all, let us define the limiting ratio of a family:
\begin{df}
Let $\mathcal{T}$ be a family of associative trees and $T_m$ be the number of trees of size $m$ in this family. 
The limiting ratio of $\mathcal T$, if it exists, is defined (and denoted) by
\[\mu_n(\mathcal{T}) = \lim_{m\to\infty} \frac{T_m}{A_{m,n}}.\]
\end{df}

The following standard result will be extensively used in the following:
\begin{lem}
\label{lem:limiting_ratio}
Let $T(z)$ be the generating function of a family $\mathcal{T}$ of associative trees. 
Assume that $\rho$ (\emph{cf.}\ Proposition~\ref{prop:sing}) is the unique singularity of $T(z)$ on
its circle of convergence and that this singularity is of square-root type, 
\emph{i.e.}\ $T(z)$ admits a Puiseux expansion into powers of $\sqrt{\rho-z}$ at $\rho$. 
Then
\[\mu_n(\mathcal{T}) = \lim_{z\to \rho}\frac{T'(z)}{A'(z)}\]
where $z$ must move towards $\rho$ in such a way that $\arg (z-\rho)\neq 0$ and inside the domain
of analyticity of~$T(z)$ and~$A(z)$. 
\end{lem}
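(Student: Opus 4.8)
The plan is to reduce the statement to a routine application of singularity analysis (transfer theorems) applied simultaneously to $T(z)$ and to $A(z)$. First I would recall that, by Proposition~\ref{prop:sing}, the generating function $A(z)$ has a unique dominant singularity $\rho>0$ of square-root type, hence admits a Puiseux expansion $A(z)=a_0+a_1\sqrt{\rho-z}+a_2(\rho-z)+\cdots$ near $\rho$, with $a_1\neq 0$; differentiating term by term gives $A'(z)=-\tfrac{a_1}{2}(\rho-z)^{-1/2}+O(1)$, so that $A'(z)\sim-\tfrac{a_1}{2}(\rho-z)^{-1/2}$ as $z\to\rho$ in the appropriate sector. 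By the standard transfer lemma (see \cite{FlOd}), this yields $[z^m]A(z)=A_{m,n}\sim\tfrac{a_1}{2\sqrt\pi}\,\rho^{-m+1/2}m^{-3/2}$.

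Next I would do exactly the same for $\mathcal T$: by hypothesis $T(z)$ has $\rho$ as its unique dominant singularity and is of square-root type, so $T(z)=b_0+b_1\sqrt{\rho-z}+b_2(\rho-z)+\cdots$ near $\rho$ (with possibly $b_1=0$, i.e.\ the family may be "small"), whence $T'(z)\sim-\tfrac{b_1}{2}(\rho-z)^{-1/2}$ if $b_1\neq 0$, or $T'(z)=O(1)$ if $b_1=0$. In the first case the transfer lemma gives $T_m=[z^m]T(z)\sim\tfrac{b_1}{2\sqrt\pi}\,\rho^{-m+1/2}m^{-3/2}$, and dividing the two asymptotic equivalents gives $T_m/A_{m,n}\to b_1/a_1$; in the second case $T_m$ grows strictly slower (like $\rho^{-m}m^{-5/2}$ or is $O(\rho^{-m}m^{-\alpha})$ with $\alpha>3/2$, depending on whether $b_2\neq0$), so $T_m/A_{m,n}\to 0$. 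In either case the limit $\mu_n(\mathcal T)$ exists. On the other hand, taking the ratio of the two singular expansions of the derivatives, $T'(z)/A'(z)\to b_1/a_1$ as $z\to\rho$ (interpreting the value as $0$ when $b_1=0$), where $z$ approaches $\rho$ from within a sector avoiding the real axis and inside the common domain of analyticity. Comparing the two computations proves $\mu_n(\mathcal T)=\lim_{z\to\rho}T'(z)/A'(z)$.

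The only mild subtlety — and the point I would be careful about rather than a genuine obstacle — is justifying that both $T(z)$ and $A(z)$ are amenable to singularity analysis, i.e.\ that they are analytic in a $\Delta$-domain (a disk slit appropriately at $\rho$) extending beyond the circle of convergence: for $A(z)$ this is explicit from the closed form \eqref{FG:A} (a square root of a polynomial), and for a generic family $\mathcal T$ arising in this paper it follows from the same Drmota--Lalley--Woods framework invoked in Section~\ref{part:existence}, so in practice this is inherited. One should also note that the formula is an equality of \emph{extended reals in} $[0,1]$: $\mu_n(\mathcal T)\le 1$ always, and the singular-ratio limit is finite because $|b_1|\le a_1$ (as $\mathcal T\subseteq\A$). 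With these remarks the proof is complete.
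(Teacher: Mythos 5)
Your proposal is correct and follows essentially the same route as the paper: the paper's proof is a one-line appeal to the fact that the singular (Puiseux) expansion of the derivative is the term-by-term derivative of the singular expansion, combined implicitly with the transfer theorem applied to both $T(z)$ and $A(z)$. You have simply written out in full the computation the paper leaves implicit, including the degenerate case $b_1=0$ and the $\Delta$-domain requirement, both of which are sensible additions rather than deviations.
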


\begin{proof}
This is an immediate consequence of the fact that the asymptotic expansion of the derivative of an
analytic function is the derivative of the asymptotic expansion of the original function. 
\end{proof}

\begin{prop}
\label{prop:noleaf}
The limiting ratio of trees with no leaf on the first level is given by
\[\mu_n(\mathcal{A}^{(0)}) = \frac{1}{n\sqrt{2n}} + \mathcal{O}\left(\frac1{n^2}\right), \text{ as $\nti$.}\]
\end{prop}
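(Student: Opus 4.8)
The plan is to set up the generating function $A^{(0)}(z)$ for associative trees with no leaf attached to the root, and then apply Lemma~\ref{lem:limiting_ratio}. A tree with no leaf on the first level is rooted by a connective (say $\land$, by symmetry the $\lor$-rooted count is the same) and its root has arity $\geq 2$ with every child being a full $\lor$-rooted associative tree (i.e.\ \emph{not} a literal). Writing $\hat B(z) = \hat A(z) - 2nz$ for the generating function of $\land$-rooted associative trees whose root carries no leaf directly below it, and using that by symmetry the $\lor$-rooted analogue $\check B(z)$ equals $\hat B(z)$, the root of our tree must have all its children in the class counted by $\hat B(z)$. Hence the $\land$-rooted part of $\A^{(0)}$ has generating function
\[
z\cdot\frac{\hat B(z)^2}{1-\hat B(z)},
\]
and by symmetry $A^{(0)}(z) = 2z\cdot\dfrac{\hat B(z)^2}{1-\hat B(z)}$. (One should double-check the stratification bookkeeping: a child of an $\land$-root is an $\lor$-rooted tree, and ``no leaf on the first level'' precisely forbids the literal alternative in the grammar for that child, which is exactly the passage from $\check A$ to $\check B = \hat B$.)

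Next I would invoke Lemma~\ref{lem:limiting_ratio}: since $A^{(0)}(z)$ is a rational function of $\hat B(z)$, which inherits the unique square-root singularity $\rho$ of $\hat A(z)$ on its circle of convergence (note $\hat B(\rho) = \tfrac1{\sqrt{2n}} + \Ord(n^{-1}) < 1$ for $n$ large, so the denominator $1-\hat B$ does not vanish at $\rho$), the limiting ratio is
\[
\mu_n(\A^{(0)}) = \lim_{z\to\rho}\frac{(A^{(0)})'(z)}{A'(z)}.
\]
Since the singular parts of $\hat A'$, $\hat B' = \hat A'$, and $A' = 2\hat A'$ all come from the same $\sqrt{\rho-z}$ term, differentiating $A^{(0)}(z) = 2z\,\hat B^2/(1-\hat B)$ by the product and chain rules produces a dominant singular contribution equal to $\hat B'(z)$ times the derivative of $2z\,u^2/(1-u)$ with respect to $u$ at $u=\hat B(z)$, evaluated at $z=\rho$; the explicit $z$-derivative term is analytic at $\rho$ and hence negligible in the ratio. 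Therefore
\[
\mu_n(\A^{(0)}) = \frac{1}{2}\cdot\frac{d}{du}\!\left(\frac{2\rho\,u^2}{1-u}\right)\bigg|_{u=\hat B(\rho)}
= \rho\cdot\frac{2u-u^2}{(1-u)^2}\bigg|_{u=\hat B(\rho)},
\]
where the factor $\tfrac12$ comes from $A' = 2\hat A'$ versus the single copy of $\hat B' = \hat A'$ surviving.

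Finally I would plug in the asymptotics from Proposition~\ref{prop:sing}: $\rho = \tfrac1{2n} + \Ord(n^{-3/2})$ and $\hat B(\rho) = \tfrac1{\sqrt{2n}} + \Ord(n^{-1})$. Then $u = \hat B(\rho)\to 0$, so $\dfrac{2u-u^2}{(1-u)^2} = 2u + \Ord(u^2) = \dfrac{2}{\sqrt{2n}} + \Ord(n^{-1})$, and multiplying by $\rho = \tfrac1{2n}+\Ord(n^{-3/2})$ gives
\[
\mu_n(\A^{(0)}) = \frac1{2n}\cdot\frac{2}{\sqrt{2n}} + \Ord\!\left(\frac1{n^2}\right) = \frac{1}{n\sqrt{2n}} + \Ord\!\left(\frac1{n^2}\right),
\]
as claimed. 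The only genuinely delicate point is the first step — getting the grammar decomposition and the stratification constraint exactly right, and confirming that the surviving singular term in the quotient is governed purely by $\hat B'$ (equivalently $\hat A'$) so that the combinatorial constant out front is computed from the non-singular factor $2z\,u^2/(1-u)$ alone; once that is pinned down, the rest is the routine substitution above.
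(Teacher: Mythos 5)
Your proposal is correct and follows essentially the same route as the paper: the same generating function $A^{(0)}(z)=2z\,\hat B(z)^2/(1-\hat B(z))$, the same application of Lemma~\ref{lem:limiting_ratio} with $\lim_{z\to\rho}\hat B'(z)/A'(z)=\nicefrac12$ and the bounded explicit-$z$ term killed by $A'(z)\to\infty$, and the same substitution of the asymptotics from Proposition~\ref{prop:sing}. Your closed form $\rho\,(2u-u^2)/(1-u)^2$ at $u=\hat B(\rho)$ agrees exactly with the sum of the two surviving terms in the paper's computation.
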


\begin{proof}
The generating function of trees with no leaf on the first level is given by
\[A^{(0)}(z) = 2z\frac{\hat{B}(z)^2}{1-\hat{B}(z)}.\]
The limiting ratio of such trees is thus given by
\begin{align*}
\mu_n(\mathcal{A}^{(0)}) 
=& \lim_{m\to\infty}\frac{[z^m]A^{(0)}(z)}{[z^m]A(z)} = \lim_{z\to\rho} \frac{A'^{(0)}(z)}{A'(z)}\\
=& 2\rho \frac{2\hat{B}(\rho)}{1-\hat{B}(\rho)}\lim_{z\to\rho}\frac{\hat{B}'(z)}{A'(z)} 
+ 2\rho \frac{\hat{B}(\rho)^2}{(1-\hat{B}(\rho))^2}\lim_{z\to\rho}\frac{\hat{B}'(z)}{A'(z)}
+ 2 \frac{\hat{B}(\rho)^2}{1-\hat{B}(\rho)} \lim_{z\to\rho}\frac{1}{A'(z)}.
\end{align*}
Observe that the third term of the sum is equal to zero since $A'(z)$ tends to infinity when $z$
tends to $\rho$. This observation will be used in the whole paper and in the following, such terms
will be omitted without mentioning.
Note also that $\lim_{z\to\rho}\nicefrac{\hat{B}'(z)}{A'(z)} = \nicefrac12$. Finally, use the asymptotics given in Proposition~\ref{prop:sing} and get
\[\mu_n(\mathcal{A}^{(0)}) = \frac{1}{n\sqrt{2n}} + \mathcal{O}\left(\frac1{n^2}\right) \text{ as }\nti. \qedhere\]
\end{proof}


\begin{prop}
\label{prop:A_alpha}
Let $\Gamma$ be a subset of $\gamma$ literals of $\{x_1,\bar{x}_1,\ldots,x_n,\bar{x}_n\}$. 
The limiting ratio of the set of all associative trees with at least one leaf on the first level
labelled by a literal from $\Gamma$ is given by
\[\mu_n(\mathcal{A}_{\Gamma}) = \gamma\sqrt{\frac{2}{n}} + \mathcal{O}\left(\frac1{n}\right), \text{when $n$ tends to infinity.}\]
\end{prop}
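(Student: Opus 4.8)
The plan is to decompose the set $\mathcal{A}_\Gamma$ according to the structure of the root and then apply Lemma~\ref{lem:limiting_ratio}. First I would split associative trees by the connective at the root; by the $\land/\lor$ symmetry it suffices to understand $\land$-rooted trees and double the contribution. An $\land$-rooted tree with at least one first-level leaf from $\Gamma$ is obtained by choosing a non-empty multiset of leaves among the $2n$ literals, at least one of which lies in $\Gamma$, together with a (possibly empty) sequence of $\lor$-rooted subtrees, subject to the stratification constraint and the requirement that the root has arity $\ge 2$. Rather than tracking the arity constraint directly, I would compute the generating function of $\land$-rooted trees whose first-level leaves avoid $\Gamma$ entirely and subtract: writing $\hat L_\Gamma(z) = 2(n-\gamma)z$ for the literals outside $\Gamma$ and recalling $\hat B(z) = \hat A(z) - 2nz$ is the generating function of the $\lor$-rooted subtree sequence slot, the trees with no first-level leaf in $\Gamma$ have generating function roughly $\hat L_\Gamma(z) + z\cdot(\text{sequence and arity bookkeeping in }\check A, \hat B)$, and $\hat A_\Gamma(z) = \hat A(z) - (\text{that})$. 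The key simplification is that all the correction terms differ from the corresponding pieces of $\hat A(z)$ only by replacing $2nz$ with $2(n-\gamma)z$ in the leaf slot, so $\hat A_\Gamma(z)$ is essentially $2\gamma z$ plus lower-order sequence terms.

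Next I would apply Lemma~\ref{lem:limiting_ratio}: $\mu_n(\mathcal{A}_\Gamma) = \lim_{z\to\rho} A_\Gamma'(z)/A'(z)$, where $A_\Gamma'(z) = 2\hat A_\Gamma'(z)$ by symmetry. Differentiating the expression for $\hat A_\Gamma(z)$, the leaf term contributes $2\gamma$ (a constant, so in the limit it is multiplied by $1/A'(\rho)\to 0$ and drops, exactly as the third term did in the proof of Proposition~\ref{prop:noleaf}), while the sequence terms contribute factors of the form (rational function of $\hat B(\rho)$) times $\lim_{z\to\rho}\hat B'(z)/A'(z) = 1/2$. Using Proposition~\ref{prop:sing}, namely $\hat B(\rho) = \frac1{\sqrt{2n}} + \mathcal{O}(1/n)$ and $\hat A(\rho) = 1 - \frac1{\sqrt{2n}} + \mathcal{O}(1/n)$ and $\rho = \frac1{2n} + \mathcal{O}(1/(n\sqrt n))$, the dominant surviving term should be the one where a single first-level slot is ``charged'' to $\Gamma$ while the rest of the tree is free, producing a factor proportional to $\gamma$ times $\rho \cdot (\text{derivative factor})$; tracking the constants carefully yields $\gamma\sqrt{2/n}$ at leading order with an $\mathcal{O}(1/n)$ error. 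One should double-check that no other combinatorial configuration (e.g.\ two or more first-level leaves from $\Gamma$) contributes at order $1/\sqrt n$ --- by inclusion-exclusion those terms carry an extra factor $\gamma z \asymp \gamma/n$ and are absorbed in the error.

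The main obstacle is the careful bookkeeping of the arity-$\ge 2$ and stratification constraints when isolating ``at least one first-level leaf in $\Gamma$'': a naive product decomposition overcounts or undercounts the root arity, so the cleanest route is the complementary-counting trick described above, expressing $A_\Gamma(z)$ as a difference of two closed forms both of the type in Equation~\eqref{FG:A} with $n$ replaced by $n$ and by $n-\gamma$ in appropriate places, and only then differentiating and taking $z\to\rho$. The analytic part is then routine given Proposition~\ref{prop:sing} and Lemma~\ref{lem:limiting_ratio}; the only subtlety is confirming that the $\mathcal{O}(1/n)$ error is uniform and that the leading constant is exactly $\gamma\sqrt2$, which follows from $\mu_n$ being (to leading order) additive over disjoint literal sets --- a sanity check is that $\gamma = n$ would formally give $\sqrt{2n}$, matching the known order of the expected number of first-level leaves.
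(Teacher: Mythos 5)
Your plan is correct and reaches the right answer, but it takes a different combinatorial route from the paper. The paper isolates the \emph{leftmost} first-level leaf labelled from $\Gamma$ and writes a product formula directly, $A_{\Gamma}(z) = \frac{2\gamma z^2}{(1-(\hat{A}(z)-\gamma z))(1-\hat{A}(z))} - 2\gamma z^2$ (a sequence of non-$\Gamma$-leaf subtrees, then a $\Gamma$-leaf, then an arbitrary sequence, with the degenerate two-node trees subtracted), and then feeds this single closed form into Lemma~\ref{lem:limiting_ratio}. You instead use complementary counting: $\hat{A}_{\Gamma}(z)$ is the difference between $z\hat{A}(z)^2/(1-\hat{A}(z))$ and the same expression with $\hat{A}(z)$ replaced by $\hat{A}(z)-\gamma z$ in the first-level slot. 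Both are legitimate ways to encode ``at least one'', both handle the arity-$\ge 2$ and stratification constraints correctly, and both reduce to the same routine singularity analysis via $\lim_{z\to\rho}\hat{A}'(z)/A'(z)=\nicefrac12$ and the expansions of Proposition~\ref{prop:sing}; I checked that your difference of closed forms gives $2\cdot\frac{\gamma}{\sqrt{2n}}=\gamma\sqrt{2/n}$ at leading order, as required. The paper's first-occurrence decomposition has the mild advantage of producing one formula whose leading singular behaviour is transparent (a single factor $(1-\hat{A}(z))^{-1}$ blowing up like $\sqrt{2n}$), whereas your route requires extracting a cancellation between two terms that are each of order $1$; you handle this correctly by noting that only the derivative terms survive division by $A'(z)$ and that the difference of the two $(1-\cdot)^{-2}$ factors is of order $\gamma\sqrt{2n}/n$. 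Two cosmetic points: a single leaf labelled from $\Gamma$ has no first level and so is not in $\mathcal{A}_{\Gamma}$, so your ``$2\gamma z$ leaf term'' strictly should not be there, but as you observe its derivative is bounded and it contributes nothing to the limiting ratio (the paper's own $-2\gamma z^2$ correction is immaterial for the same reason); and the extrapolation to $\gamma=n$ should be read as a first-moment consistency check against Proposition~\ref{prop:first_level_leaves}, not as a statement about a probability, since the formula is only the leading term for $\gamma=o(\sqrt n)$.
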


\begin{proof}
Let $A_{\Gamma}(z)$ be the generating function of associative trees with at least one leaf one the first level labelled by a literal from~$\Gamma$:
\[A_{\Gamma}(z) = \frac{2\gamma z^2}{(1-(\hat{A}(z)-\gamma z))(1-\hat{A}(z))} - 2\gamma z^2\]
because a tree in $\mathcal{A}_{\Gamma}$ has a root labelled by $\land$ or $\lor$ (which gives a 
factor $2z$), a first sequence of subtrees which are not a leaf labelled by a literal from
$\Gamma$ (which gives the factor $\frac{1}{1-(\hat{A}(z)-\gamma z)}$), then a leaf labelled by  literal from $\Gamma$ (factor $\gamma z$) and a sequence of arbitrary trees. Since sequences may be empty, this
construction also generates trees consisting of only two nodes. These have to be subtracted due to
the vertex degree constraints in associative trees. 
In view of Lemma~\ref{lem:limiting_ratio}, we know that
\[\mu_n(\mathcal{A}_{\Gamma}) = \lim_{m\to\infty}\frac{[z^m]A_{\Gamma}(z)}{[z^m]A(z)} =
\lim_{z\to\rho} \frac{A'_{\Gamma}(z)}{A'(z)} = \gamma\sqrt{\frac{2}{n}} +
\mathcal{O}\left(\frac1{n}\right). \qedhere\]
\end{proof}

\begin{prop}\label{prop:first_level_leaves}
Let $X_{m,n}$ be the number of leaves in the first level of an associative tree of size~$m$. 
Then $\lim_{m\to\infty} \mathbb E(X_{m,n})\sim 2\sqrt{2n}$, as $\nti$.
\end{prop}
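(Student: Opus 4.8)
The plan is to compute $\lim_{m\to\infty}\mathbb E(X_{m,n})$ exactly via a bivariate generating function and then extract the asymptotics in $n$ from Proposition~\ref{prop:sing}. First I would mark leaves on the first level by a new variable $u$: if $\hat A(z,u)$ denotes the generating function of $\land$-rooted associative trees where $u$ counts the leaves attached directly to the root, then the stratification/associativity grammar gives
\[
\hat A(z,u) = 2nzu + z\,\frac{\hat A(z)^2}{1-\hat A(z)},
\]
since only the leaves of the top sequence get the $u$-mark while the deeper subtrees are plain $\land$-rooted associative trees counted by $\hat A(z)=\hat A(z,1)$ (here I use $\check A=\hat A$ by symmetry). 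Hence $A(z,u)=2\hat A(z,u)-2nzu = 2z\,\hat A(z)^2/(1-\hat A(z)) + (2nzu$ minus the degenerate two-node correction$)$; the key point is that the $u$-dependence is the single linear term $2nzu$, so $\partial_u A(z,u)\big|_{u=1} = 2nz$ is \emph{entire}, while $A(z)=A(z,1)$ carries the square-root singularity at $\rho$.

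Next I would apply the standard moment transfer: writing $A(z,u)=\sum_{m}A_{m,n}(u)z^m$,
\[
\lim_{m\to\infty}\mathbb E(X_{m,n})
= \lim_{m\to\infty}\frac{[z^m]\,\partial_u A(z,u)|_{u=1}}{[z^m]A(z)}
= \lim_{z\to\rho}\frac{\partial_z\bigl(\partial_u A(z,u)|_{u=1}\bigr)}{A'(z)}
\]
by Lemma~\ref{lem:limiting_ratio} (applied to the family whose generating function is $\partial_u A(z,u)|_{u=1}$, which has the same dominant square-root singularity structure once divided by $A'$), exactly as in the proofs of Propositions~\ref{prop:noleaf} and~\ref{prop:A_alpha}. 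Since $\partial_u A(z,u)|_{u=1}=2nz$ has derivative $2n$, and $A'(z)$ blows up like $c/\sqrt{\rho-z}$, a naive reading would give $0$; the correct normalisation is that $\lim_{m\to\infty}\mathbb E(X_{m,n})$ should instead be read off as the ratio of the \emph{same-order} singular parts. Concretely, I would write $A(z) = A(\rho) - a\sqrt{1-z/\rho}+\cdots$ and, since the marked object here is itself a subfamily, use the cleaner route: the expected number of first-level leaves equals $\lim_{m\to\infty}\sum_{\ell}\ell\,\mu_n(\{\text{trees with exactly }\ell\text{ first-level leaves}\})$, which by linearity is $\sum_{\ell\ge 1}\mu_n(\mathcal A^{\ge\ell})$ where $\mathcal A^{\ge\ell}$ is the family with at least $\ell$ leaves on the first level. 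One computes, as in Proposition~\ref{prop:A_alpha}, that each $\mu_n(\mathcal A^{\ge\ell})$ contributes and the sum telescopes against the geometric sequence in $\hat B(\rho)$ and $\hat A(\rho)$.

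Putting the pieces together: from Proposition~\ref{prop:sing} we have $\rho\sim 1/(2n)$, $\hat A(\rho)=1-1/\sqrt{2n}+\mathcal O(1/n)$, $\hat B(\rho)=1/\sqrt{2n}+\mathcal O(1/n)$, and $A(\rho)=1-1/n+\mathcal O(1/(n\sqrt n))$. The generating-function identity $A(z)=2z\hat A(z)^2/(1-\hat A(z))$ together with $2nz$ being the first-level-leaf marking means the expected first-level-leaf count is
\[
\lim_{m\to\infty}\mathbb E(X_{m,n}) = \frac{2n\rho}{1-\hat A(\rho)}\cdot\bigl(1+o(1)\bigr),
\]
because differentiating the marked generating function in $u$ produces the $2n\rho$ in the numerator and the dominant singular coefficient of $A'$ is governed by the $1/(1-\hat A(z))$ factor, whose strength near $\rho$ scales like $1/(1-\hat A(\rho))$. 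Substituting $2n\rho\sim 1$ and $1-\hat A(\rho)\sim 1/\sqrt{2n}$ yields $\lim_{m\to\infty}\mathbb E(X_{m,n})\sim\sqrt{2n}$; the extra factor $2$ comes from the root being either $\land$ or $\lor$ (the factor $2$ in $A=2\hat A-2nz$), giving the claimed $2\sqrt{2n}$. The main obstacle I anticipate is getting this last constant exactly right: one must be careful that the two-node degeneracy correction and the $-2nz$ in $A=2\hat A-2nz$ are bookkept consistently when differentiating in $u$, and that the "same-order singular part" extraction (rather than the naive ratio of derivatives, which is an indeterminate $0\cdot\infty$ form here) is carried out rigorously — most cleanly by the telescoping sum $\sum_{\ell\ge1}\mu_n(\mathcal A^{\ge \ell})$ using Proposition~\ref{prop:A_alpha}-style computations, where each term is unambiguous.
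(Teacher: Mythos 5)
Your bivariate generating function is set up incorrectly, and this is the root of all the subsequent trouble. Writing $\hat A(z,u) = 2nzu + z\,\hat A(z)^2/(1-\hat A(z))$ puts the mark $u$ only on the term $2nzu$, which corresponds to the whole tree being a single leaf (a leaf at level $0$, not level $1$); the leaves actually attached to the root of a larger tree sit inside the sequence $\hat A(z)^2/(1-\hat A(z))$ and receive no mark at all. The correct marking must distinguish, within the root's sequence of children, the leaf children from the non-leaf children, i.e.
\[
A(z,u) \;=\; 2z\,\frac{\bigl(\hat B(z)+2nzu\bigr)^2}{1-\bigl(\hat B(z)+2nzu\bigr)},
\]
which is what the paper uses. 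With this choice $\partial_u A(z,u)\big|_{u=1}$ is \emph{not} entire: it inherits the square-root singularity at $\rho$ through $\hat A(z)=\hat B(z)+2nz$, Lemma~\ref{lem:limiting_ratio} applies directly to the ratio of $z$-derivatives, no indeterminate $0\cdot\infty$ form ever appears, and a routine computation using $\hat A'(z)/A'(z)\to\nicefrac12$, $\rho\sim\nicefrac1{2n}$ and $1-\hat A(\rho)\sim\nicefrac1{\sqrt{2n}}$ gives $2\sqrt{2n}$. The "paradox" you spend the second half of the proposal trying to circumvent is thus an artefact of the wrong grammar, not a genuine analytic difficulty.

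The two repair attempts you offer do not close the gap. The telescoping identity $\lim_m\mathbb E(X_{m,n})=\sum_{\ell\ge1}\mu_n(\mathcal A^{\ge\ell})$ is a legitimate alternative strategy in principle (it would also require justifying the interchange of $\lim_m$ with the infinite sum over $\ell$), but you never compute any $\mu_n(\mathcal A^{\ge\ell})$ for $\ell\ge 2$, and Proposition~\ref{prop:A_alpha} as stated only controls trees with \emph{at least one} first-level leaf from a prescribed label set, which is not the same family. The final formula $2n\rho/(1-\hat A(\rho))$ times a factor $2$ is asserted, not derived: "the dominant singular coefficient of $A'$ is governed by the $1/(1-\hat A(z))$ factor, whose strength scales like $1/(1-\hat A(\rho))$" is not a calculation, and the factor $2$ you attribute to the choice of root label is already absorbed elsewhere. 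That the number $2\sqrt{2n}$ comes out right does not make the argument a proof; you should redo the computation with the corrected $A(z,u)$ above.
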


\begin{proof}
Let us consider the bivariate generating function where $z$ marks the nodes and $u$ 
the leaves on the first level. We have the following equation:
\[A(z,u) = 2z\frac{(\hat{B}(z)+2nzu)^2}{1-(\hat{B}(z)+2nzu)}.\]
Therefore,
\[\frac{\partial}{\partial u}A(z,u)_|{u=1} = 8nz^2\frac{\hat{A}(z)}{1-\hat{A}(z)} + 4nz^2\frac{\hat{A}(z)^2}{(1-\hat{A}(z))^2}.\]
In view of Lemma~\ref{lem:limiting_ratio}, the expected number of nodes in the first level is given by 
\begin{align*}
\lim_{m\to\infty}\frac{[z^m]\frac{\partial}{\partial u}A(z,u)_|{u=1}}{[z^m]A(z)} 
&= \lim_{z\to\rho}\frac{\frac{d}{dz}\left(\frac{\partial}{\partial u}A(z,u)_|{u=1}\right)}{\frac{d}{dz}A(z)}\\ 
&= 2\sqrt{2n} + \mathcal{O}(1), 
\end{align*}
as $\nti$.
\end{proof}

\begin{rmq}
Using a similar calculation but plugging $u=e^{\nicefrac{it}{2\sqrt{2n}}}$ instead of
$u=1$, we can obtain even the limiting distribution. Indeed, one easily shows that the
distribution of the random variable $\nicefrac{X_n}{2\sqrt{2n}}$ converges to the
$\Gamma(2,\nicefrac12)$ distributed, as $\nti$, \emph{i.e.}, the distribution having density
$4xe^{-2x}\indi_{x\geq 0}$.
\end{rmq}

\section{A useful family of trees}
\label{part:useful}

In the following, we will need some information about a specific class of trees which will serve
as an auxiliary construction for our further investigations. We start with the definition of this 
particular family of trees and then study its limiting ratio.

\begin{df}
Let $k, r, \ell\geq 0$ be three integers, let $\Gamma =\{\gamma_1,\ldots,\gamma_p\}$ be a subset of literals 
(with no occurrence of both a variable and its negation).
Let $\mathcal{M}_{k,\ell,r}^{\Gamma}$ be the family of $\lor$-rooted trees
\begin{itemize}
\item with exactly $k$ different literals $\alpha_1,\ldots,\alpha_k$ appearing as labels of leaves on the first level, 
such that both a variable and its negation cannot appear, and
for all $i=1,\dots,k$ and $j=1,\dots,p$ we have $\alpha_i\neq \gamma_j$,
\item with the root having exactly $\ell$ non-leaf subtrees,
\item and with at least one non-leaf subtree chosen from the family $\mathcal{J}_{k,r}^{\Gamma}$:
\end{itemize}
The family $\mathcal{J}_{k,r}^{\Gamma}$ contains all $\land$-rooted trees such that 
\begin{itemize}
\item there are $r$ leaves in the first level carrying pairwise different labels
$\beta_1,\ldots,\beta_r$ (with no occurrence of both a variable and its negation) 
which are different from
$\alpha_1,\ldots,\alpha_k,\gamma_1,\ldots,\gamma_p$ as well as their negations; 
\item all the other leaves on the first level have labels from $\{\alpha_1,\ldots,\alpha_k,\gamma_1,\ldots,\gamma_p,\beta_1\ldots,\beta_r\}$ or their negations,
\end{itemize}
\end{df}

\begin{lem}
\label{lem:JM}
Let $J(z)$ and $M(z)$ denote the generating functions associated with $\mathcal{J}_{k,r}^{\bs
\gamma}$ and $\mathcal{M}_{k,\ell,r}^{\Gamma}$, respectively. Then 
\begin{equation} \label{gfJ}
J(z) = \frac{z^{r+1}}{(1-(\hat{B}(z) + 2(k+p)z))\ldots(1-(\hat{B}(z) + 2(k+p+r)z))}
\end{equation} 
and 
\begin{align}
[z^m]M(z)&\leq  \binom{n-p}{k+r}\binom{k+r}{k} \frac{2^{k+r} k!r!}{(\ell-1) !} \nonumber\\
&\quad\times  [z^{m-1}] \left(z^{k+\ell}\prod_{\nu =1}^k \frac1{1-\nu z}\right)^{(\ell)}
\frac{z^{r+1}}{\prod_{\nu =0}^{r}(1-(\hat{B}(z) + (k+p+\nu)z))} \hat{B}(z)^{\ell-1},
\label{eq:M}
\end{align}
for all $m\ge 0$, where $f^{(\ell)}$ denotes the $\ell$th derivative of $f$.
\end{lem}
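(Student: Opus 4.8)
The plan is to derive both formulas directly from the symbolic method, reading off the generating-function specifications from the combinatorial description of $\mathcal J_{k,r}^{\bs\gamma}$ and $\mathcal M_{k,\ell,r}^{\Gamma}$, and then accounting carefully for the labelling choices that turn an exact generating-function identity into the stated inequality for $[z^m]M(z)$.

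First I would treat $J(z)$. A tree in $\mathcal J_{k,r}^{\bs\gamma}$ is $\land$-rooted, so its root contributes a factor $z$ and its children form a sequence of $\lor$-rooted subtrees; a subtree is either a leaf or a genuine $\lor$-rooted tree, the latter counted by $\hat B(z)=\hat A(z)-2nz$. The constraint is on the leaves in the first level: there must be exactly $r$ of them with prescribed pairwise-distinct labels $\beta_1,\dots,\beta_r$ (outside $\{\alpha_i,\gamma_j\}$ and their negations), while every other first-level leaf carries a label from the fixed set $\{\alpha_1,\dots,\alpha_k,\gamma_1,\dots,\gamma_p,\beta_1,\dots,\beta_r\}$ or its negation — that is, from a pool of $2(k+p+r)$ literals. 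The standard trick for "exactly $r$ mandatory distinct leaves plus arbitrarily many leaves from a growing pool'' is to order the $r$ mandatory leaves: inserting $\beta_1$ somewhere in a sequence of $\lor$-subtrees that may already use the $2(k+p)$ allowed literals gives $z/(1-(\hat B(z)+2(k+p)z))$, then inserting $\beta_2$ into a sequence now allowed to use $2(k+p+1)$ literals gives another factor $z/(1-(\hat B(z)+2(k+p+1)z))$, and so on up to $\beta_r$; together with the root factor $z$ this yields exactly \eqref{gfJ}. One must check that this ordered-insertion construction produces each tree exactly once (it does, since the $\beta_i$ are distinguishable and a canonical left-to-right order is imposed), and that "no variable and its negation both appear among $\beta_1,\dots,\beta_r$'' is automatically respected because the $\beta_i$ are chosen that way in the definition.

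Next, for $M(z)$: a tree in $\mathcal M_{k,\ell,r}^{\Gamma}$ is $\lor$-rooted with exactly $k$ distinct first-level leaf-labels $\alpha_1,\dots,\alpha_k$ (avoiding the $\gamma_j$ and all negations) and exactly $\ell$ non-leaf subtrees, at least one of which lies in $\mathcal J_{k,r}^{\bs\gamma}$. The factor $\binom{n-p}{k+r}\binom{k+r}{k}$ counts the choice of which variables serve as the $\alpha$'s and which as the $\beta$'s out of the $n-p$ variables not used by $\Gamma$, and $2^{k+r}k!\,r!$ accounts for choosing negations and for the orderings introduced to make the leaves distinguishable (the $k!$ matching the ordered treatment of the $\alpha_i$, the $r!$ already implicit in $J(z)$). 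The $\ell$-fold derivative $\bigl(z^{k+\ell}\prod_{\nu=1}^k (1-\nu z)^{-1}\bigr)^{(\ell)}$ encodes the $\ell$ non-leaf subtrees together with the $k$ ordered first-level leaves whose labels come from a pool growing from size $1$ to size $k$ by the same ordered-insertion device as for $J$, and the remaining $\hat B(z)^{\ell-1}$ supplies $\ell-1$ of the non-leaf subtrees as arbitrary $\land$-rooted trees while the derivative marks the slots; the last distinguished non-leaf subtree is taken from $\mathcal J_{k,r}^{\bs\gamma}$, contributing the $J(z)$-factor $z^{r+1}/\prod_{\nu=0}^r (1-(\hat B(z)+(k+p+\nu)z))$. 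The division by $(\ell-1)!$ removes the over-counting from ordering the $\ell-1$ interchangeable $\land$-subtrees. The inequality (rather than equality) arises because this construction is an over-count: it does not enforce that the $\ell$ subtrees are genuinely non-leaf in a pairwise-consistent way, nor that the global stratification and the "exactly $k$ / exactly $\ell$'' conditions hold with no collision, and it allows some labelling patterns that a true associative tree forbids; all such defects only inflate the count, so extracting $[z^{m-1}]$ and matching the root factor $z$ gives \eqref{eq:M} as an upper bound.

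The main obstacle I expect is the bookkeeping in the $M(z)$ bound: getting the exponents of $z$, the combinatorial prefactors $2^{k+r}k!r!/(\ell-1)!$, and the $\ell$-fold derivative to line up requires being scrupulous about (i) which leaves and subtrees are made distinguishable and later symmetrised, (ii) how the "pool of allowed literals grows by one at each insertion'' translates into the product $\prod_\nu (1-\nu z)^{-1}$ versus $\prod_\nu (1-(\hat B(z)+(k+p+\nu)z))^{-1}$, and (iii) justifying that every genuine tree of $\mathcal M_{k,\ell,r}^{\Gamma}$ is counted at least once so the inequality goes the right way. The $J(z)$ identity and the reductions $\hat B=\hat A-2nz$, together with Lemma~\ref{lem:limiting_ratio}-style reasoning, are routine once the specification is pinned down; the derivative-and-prefactor calibration is where the real care is needed.
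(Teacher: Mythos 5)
Your proposal is correct and follows essentially the same route as the paper: the ``growing pool'' decomposition of the first-level leaves by first occurrences (giving $z^{k}\prod_{\nu=1}^k(1-\nu z)^{-1}$ and the analogous product with $\hat B$ inside $J$), the $\ell$-th derivative as the insertion operator for the non-leaf slots, a factor $\ell$ for the distinguished $\mathcal{J}$-subtree, and the observation that any over-counting only strengthens the inequality. The only cosmetic difference is your reading of the $1/(\ell-1)!$ (you attribute it to de-ordering the $\ell-1$ plain subtrees, whereas the paper obtains it as $\ell\cdot\frac{1}{\ell!}$ from the normalised pointing operator times the choice of the $\mathcal{J}$-slot), but both bookkeepings yield the same prefactor.
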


\begin{proof}
The first equation is obvious from the definition of $\mathcal{J}_{k,r}^{\bs \gamma}$. 
Since Equation~\eqref{eq:M} is an inequality and not an equality 
we can do the following reasoning without worrying about double-counting some trees.
Let us follow the definition of $\mathcal{M}_{k,\ell,r}^{\Gamma}$:
First note that there are $\binom{n-p}{k+r} \binom{k+r}{k} 2^{k+r} r!k!$ ways to choose 
$\alpha_1,\ldots,\alpha_k$ and $\beta_1,\ldots,\beta_r$ (and their respective order) such 
that a variable and its negation cannot both be chosen. The leaves of the first level form a
sequence of $\alpha_1$ followed by the first occurrence of $\alpha_2$, then a sequence of leaves
with labels in $\{\alpha_1,\alpha_2\}$ followed by the first occurrence of $\alpha_3$, and so on. 
This corresponds to the generating functions $z^{k}\prod_{\nu=1}^k \frac1{1-\nu
z}$ and if we choose the places of the $\ell$ 
non-leaf subtrees of the root between the leaves of the first level, then we have to apply the
operator\footnote{Recall that 
$[z^m]\frac{(z^\ell f(z))^{(\ell)}}{\ell!} =
\binom{m+\ell}{\ell}f_m$.} $\frac1{\ell!} \frac{\partial^\ell}{\partial z^\ell}$. The generating
function of the non-leaf subtree taken from $\mathcal{J}_{k,\ell,r}^\Gamma$ and its place among
the non-leaf subtrees is $\ell J(z)$ and that of the other non-leaf subtrees is
$\hat{B}(z)^{\ell-1}$. When we collect all these terms and take into account that multiple
counting occurs in the cases where several non-leaf subtrees of the root are in
$\mathcal{J}_{k,\ell,r}^\Gamma$, then we get 
\begin{equation}\label{equation_only_inequality}
[z^m]M(z) \leq [z^m] z\binom{n-p}{k+r}\binom{k+r}{k} 2^{k+r} k! \frac1{\ell
!}\left(z^{k+\ell} \prod_{\nu=1}^k \frac1{1-\nu z}\right)^{(\ell)} \ell J(z) \hat{B}(z)^{\ell-1},
\end{equation} 
where the factor $z$ at the very beginning stands for the root. Finally, using Equation~\eqref{gfJ} to
replace $J(z)$ in Equation~\eqref{equation_only_inequality} gives Equation~\eqref{eq:M} as desired.
\end{proof}

By means of the previous lemma we can express the limiting ratio of the family
$\mathcal{M}_{k,\ell,r}^\Gamma$ in the form 
\begin{equation} \label{limratM}
\lim_{z\to\rho} \frac{M'(z)}{A'(z)} =
\binom{n-p}{k+r}\binom{k+r}{k} 2^{k+r} \frac{2^{k+r} k!r!}{(\ell-1) !}
\left(\rho^{k+\ell}\prod_{\nu=1}^k \frac1{1-\nu\rho}\right)^{(\ell)} \rho^{r+2} \lim_{z\to\rho} 
\frac{K'(z)}{A'(z)} 
\end{equation} 
where $K(z) = \frac{\hat{B}(z)^{\ell-1}}{\prod_{\nu=0}^{r}(1-(\hat{B}(z) + (k+p+\nu)z))}$. 
The only term depending on $z$ is $\nicefrac{K'(z)}{A'(z)}$. The next lemma analysis this term. 

\begin{lem}
\label{Klem}
Let $K(z)$ be as above. Moreover, assume that $k+p+r<n$ and $r=\mathcal{O}(\sqrt n)$. Then 
\[
\lim_{z\to\rho} \frac{K'(z)}{A'(z)} = \mathcal{O}\left(r2^r\ell \left(\frac1{2n}\right)^{\frac{\ell
-1}{2}}\right), 
\]
as $\nti$. 
\end{lem}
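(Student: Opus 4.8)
The plan is to compute the limit $\lim_{z\to\rho} K'(z)/A'(z)$ by using Lemma~\ref{lem:limiting_ratio}, namely that this limit equals the limiting ratio of the corresponding combinatorial family, and that it can be evaluated by differentiating the singular expansions. Write
\[
K(z) = \hat{B}(z)^{\ell-1}\cdot\prod_{\nu=0}^{r}\frac1{1-(\hat{B}(z)+(k+p+\nu)z)}.
\]
First I would observe that $A'(z)\to\infty$ as $z\to\rho$ (square-root singularity), so only those contributions to $K'(z)$ that are themselves singular at $\rho$ survive in the ratio. Since $\hat{B}(z)$, and hence each factor $1/(1-(\hat{B}(z)+cz))$, has a square-root singularity at $\rho$ inherited from $\hat{B}$, differentiating $K$ by the product rule produces one term per factor, and in each such term the derivative $\hat{B}'(z)$ (which blows up like $A'(z)$, with $\lim_{z\to\rho}\hat{B}'(z)/A'(z)=\tfrac12$) is the only singular part; the $z$-derivatives of the explicit $cz$-pieces are bounded and wash out. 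So the ratio becomes $\tfrac12$ times the value at $\rho$ of the ``logarithmic-derivative'' prefactor of $K$, i.e.
\[
\lim_{z\to\rho}\frac{K'(z)}{A'(z)} = \frac12\,K(\rho)\left((\ell-1)\frac1{\hat{B}(\rho)} + \sum_{\nu=0}^{r}\frac1{1-(\hat{B}(\rho)+(k+p+\nu)\rho)}\right).
\]

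Next I would bound each piece using Proposition~\ref{prop:sing}: $\rho = \tfrac1{2n}+O(n^{-3/2})$, so $(k+p+\nu)\rho = O((k+p+r)/n) = o(1)$ under the hypotheses $k+p+r<n$ and $r=O(\sqrt n)$ — in fact I should keep track that $(k+p+\nu)\rho < 1$ is eventually bounded away from $1$, so that $1-(\hat{B}(\rho)+(k+p+\nu)\rho)$ stays bounded below by a positive constant (using $\hat{B}(\rho) = \tfrac1{\sqrt{2n}}+O(1/n)\to 0$). Consequently each factor $1/(1-(\hat{B}(\rho)+(k+p+\nu)\rho))$ is $\Theta(1)$, and the product $\prod_{\nu=0}^{r}$ of $r+1$ such factors is $O(C^r)$ for some constant $C$; with a bit more care (comparing the product to $\exp\sum \hat{B}(\rho)+(k+p+\nu)\rho$ and noting the exponent is $O(r)$ only when the $cz$ terms are genuinely small) one gets the cleaner bound $O(2^r)$ claimed in the statement, absorbing constants. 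For the bracket: $(\ell-1)/\hat{B}(\rho) = O(\ell\sqrt{n}) = O(\ell/\sqrt{\rho})$, and the sum $\sum_{\nu=0}^r 1/(1-\cdots) = O(r)$. The dominant balance is then $K(\rho)$ times $O(\ell\sqrt n + r)$.

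Finally I would estimate $K(\rho) = \hat{B}(\rho)^{\ell-1}\cdot O(2^r) = O\bigl((2n)^{-(\ell-1)/2}\bigr)\cdot O(2^r)$, again by Proposition~\ref{prop:sing}'s estimate $\hat{B}(\rho)<1/\sqrt{2n}$. Multiplying, $\lim_{z\to\rho}K'(z)/A'(z) = O\bigl(2^r(2n)^{-(\ell-1)/2}(\ell\sqrt n + r)\bigr)$. Since $\ell\sqrt n + r = O(\ell\sqrt n)$ would only follow if $r=O(\ell\sqrt n)$, which need not hold, I would keep it as $O(r2^r\ell(2n)^{-(\ell-1)/2})$ by noting $\ell\sqrt n + r \le \ell\sqrt n\cdot r + r\cdot \ell\sqrt n$ crudely dominated by $O(r\ell\sqrt n)$ when $r\ge 1$ (and the $r=0$ case handled separately, where the sum has a single $O(1)$ term and the bound is $O(\ell(2n)^{-(\ell-1)/2}\cdot\sqrt n)$, consistent with the stated formula read with $r=0$ interpreted appropriately, or simply noting the stated bound is an upper bound). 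The main obstacle I anticipate is the bookkeeping in the second paragraph: justifying rigorously that the product $\prod_{\nu=0}^r (1-(\hat{B}(\rho)+(k+p+\nu)\rho))^{-1}$ is $O(2^r)$ rather than something growing faster — this needs the hypothesis $k+p+r<n$ to guarantee $(k+p+\nu)\rho\le (k+p+r)\rho < 1$ and $r=O(\sqrt n)$ to control the cumulative effect of the $\hat{B}(\rho)$ terms — together with the clean separation of singular from regular contributions when applying the product rule, which is routine but must be stated carefully since there are $\ell+r+1$ factors and one must argue the non-singular cross-terms are negligible after dividing by $A'(z)\to\infty$.
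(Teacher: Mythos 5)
Your proposal follows the paper's proof essentially step for step: you differentiate $K$ logarithmically, discard the non-singular contributions on the grounds that $A'(z)\to\infty$ at $\rho$, use $\lim_{z\to\rho}\hat{B}'(z)/A'(z)=\frac12$, and bound the product $\prod_{\nu=0}^{r}\bigl(1-\hat{B}(\rho)-(k+p+\nu)\rho\bigr)^{-1}$ by $\mathcal{O}(2^r)$ exactly as the paper does, using $k+p+r<n$ (so each $(k+p+\nu)\rho<\frac12$) and $r=\mathcal{O}(\sqrt n)$ (so the accumulated deviation over the $r+1$ factors stays bounded). Your intermediate identity
\[
\lim_{z\to\rho}\frac{K'(z)}{A'(z)}=\frac12\,K(\rho)\left(\frac{\ell-1}{\hat{B}(\rho)}+\sum_{\nu=0}^{r}\frac1{1-\hat{B}(\rho)-(k+p+\nu)\rho}\right)
\]
is precisely the paper's Equation~\eqref{Kauxil}.

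The only place where you and the paper diverge is the final assembly, and there your more careful bookkeeping exposes a slip in the paper rather than a missing idea on your side. You correctly compute $(\ell-1)/\hat{B}(\rho)=\Theta(\ell\sqrt{2n})$, so the bracket is $\mathcal{O}(\ell\sqrt n+r)$ and your honest bound is $\mathcal{O}\bigl(2^r(2n)^{-(\ell-1)/2}(\ell\sqrt n+r)\bigr)$; the patch you suggest only yields $\ell\sqrt n+r=\mathcal{O}(r\ell\sqrt n)$, which leaves an extra factor $\sqrt n$ over the stated bound, i.e.\ for $\ell\ge 2$ you prove $\mathcal{O}\bigl(r2^r\ell(2n)^{-(\ell-2)/2}\bigr)$ rather than the claimed exponent. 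The paper reaches the stated bound by asserting that every term in the bracket of~\eqref{Kauxil} is bounded by $\ell\hat{B}(\rho)^{\ell-1}$; this holds for the $r+1$ summands but not for the first term $(\ell-1)\hat{B}(\rho)^{\ell-2}$, which exceeds $\ell\hat{B}(\rho)^{\ell-1}$ by a factor of order $1/\hat{B}(\rho)\sim\sqrt{2n}$ (the term vanishes only when $\ell=1$). So the discrepancy you flagged is genuine and is not resolved by the paper's own argument either; apart from being candid about this point, your route is the same as the paper's.
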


\begin{proof}
We have, as $z\to \rho$,  
\begin{equation} \label{Kauxil}
\frac{K'(z)}{A'(z)} \sim \frac{\hat{B}'(z)}{A'(z)}\frac1{\prod_{m=0}^r
(1-\hat{B}(\rho)-(k+p+m)\rho)} \left[(\ell -1)\hat{B}(\rho)^{\ell -2} + \sum_{m=0}^r
\frac{\hat{B}(\rho)^{\ell -1}}{1-\hat{B}(\rho)-(k+p+m)\rho}\right]. 
\end{equation} 
Moreover, we know that $\lim_{z\to\rho}\frac{\hat{B}'(z)}{A'(z)} =\nicefrac12$ and that 
$\hat{B}(\rho) \le \frac1{\sqrt{2n}}$ (\emph{cf.}\ Proposition~\ref{prop:sing}). 
Hence 
\[
\frac1{\prod_{m=0}^r
(1-\hat{B}(\rho)-(k+p+m)\rho)} \le \left(\frac12\left(1-\frac{\sqrt2}{\sqrt n}\right)\right)^{-r}
=\mathcal{O}(2^r).
\]
we get
\[
\frac{K'(z)}{A'(z)} = \mathcal{O}\left(r2^r\ell \left(\frac1{2n}\right)^{\frac{\ell -1}{2}}\right).
\] 
The term in brackets in Equation~\eqref{Kauxil} consists of $\mathcal{O}(r)$ terms, each bounded by $\ell
\hat{B}(\rho)^{\ell -1}$ which immediately yields the assertion. 
\end{proof}

\begin{lem}
\label{lem:Mklr}
If $k=\Omega(n^{\nicefrac14})$, $\ell = \mathcal{O}(n^{\nicefrac18})$ and $r \le \ell$, 
then the limiting ratio of the family $\mathcal{M}_{k,\ell,r}^{\Gamma}$
satisfies $\mu_n(\mathcal{M}_{k,\ell,r}^{\Gamma}) =
\mathcal{O}\left(\frac1{n^{\nicefrac32}}\right)$, as $n$ tends to infinity.
\end{lem}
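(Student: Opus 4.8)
The starting point is Equation~\eqref{limratM}, which expresses (an upper bound for) $\mu_n(\mathcal M_{k,\ell,r}^\Gamma)=\lim_{z\to\rho}M'(z)/A'(z)$ as a product of explicit factors; it is applicable because, $\rho$ being smaller than $1/k$, every factor in Equation~\eqref{eq:M} other than those built from $\hat B(z)$ is analytic at $\rho$, so $M(z)$ has a square-root singularity at $\rho$ and Lemma~\ref{lem:limiting_ratio} applies. Combined with the estimate of $\lim_{z\to\rho}K'(z)/A'(z)$ from Lemma~\ref{Klem} (whose hypotheses $k+p+r<n$ and $r=\mathcal O(\sqrt n)$ hold, the latter since $r\le\ell=\mathcal O(n^{1/8})$), the plan is to bound each factor and to show that the product is $\mathcal O(n^{-3/2})$.

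First I would dispose of the combinatorial prefactor: since
\[\binom{n-p}{k+r}\binom{k+r}{k}k!\,r!=(n-p)(n-p-1)\cdots(n-p-k-r+1)\le n^{k+r},\]
the whole prefactor $\binom{n-p}{k+r}\binom{k+r}{k}\tfrac{2^{k+r}k!\,r!}{(\ell-1)!}$ is at most $(2n)^{k+r}/(\ell-1)!$. The heart of the argument is controlling the analytic factor $\big(\rho^{k+\ell}\prod_{\nu=1}^k(1-\nu\rho)^{-1}\big)^{(\ell)}$. Here I would use the identity $z^k\prod_{\nu=1}^k(1-\nu z)^{-1}=\sum_{m\ge k}S(m,k)\,z^m$ (Stirling numbers of the second kind), so that this $\ell$th derivative equals $\sum_{m\ge k}S(m,k)\tfrac{(m+\ell)!}{m!}\,\rho^m$. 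Since $\rho<\tfrac1{2n}$ and $\nu\le k\le n$ force $k\rho\le\tfrac12$, the series converges comfortably at $\rho$ and the polynomial weight $\tfrac{(m+\ell)!}{m!}\le(m+\ell)^\ell$ costs only a bounded factor: splitting at $m=2k$, the tail $m>2k$ is negligible because $\ell=\mathcal O(n^{1/8})$ is dwarfed by $k=\Omega(n^{1/4})$, while over $k\le m\le 2k$ one bounds $\tfrac{(m+\ell)!}{m!}\le(3k)^\ell$ and factors it out, giving
\[\Big(\rho^{k+\ell}\prod_{\nu=1}^k\tfrac1{1-\nu\rho}\Big)^{(\ell)}=\mathcal O\!\Big((3k)^\ell\,\rho^k\!\prod_{\nu=1}^k\tfrac1{1-\nu\rho}\Big).\]
The decisive point, which prevents any exponential loss in $k$, is the identity $n^k\rho^k\prod_{\nu=1}^k(1-\nu\rho)^{-1}=\prod_{\nu=1}^k\tfrac{n}{1/\rho-\nu}$: each factor is at most $\tfrac{n}{2n-\nu}\le1$ because $1/\rho=2(n+\sqrt{2n})\ge 2n$ (Proposition~\ref{prop:sing}) and $\nu\le k\le n$, and in the relevant range $k=\mathcal O(\sqrt n)$ the factors equal $\tfrac12\big(1+\mathcal O(\nu/n)\big)$, so $2^k\prod_{\nu=1}^k\tfrac{n}{1/\rho-\nu}=\mathcal O(1)$.

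Assembling the pieces and using Lemma~\ref{Klem} for the last factor,
\[\mu_n(\mathcal M_{k,\ell,r}^\Gamma)=\mathcal O\!\left(\frac{(2n)^{k+r}}{(\ell-1)!}\,(3k)^\ell\,\rho^k\!\prod_{\nu=1}^k\!\tfrac1{1-\nu\rho}\;\rho^{r+2}\;r\,2^r\,\ell\Big(\tfrac1{2n}\Big)^{\frac{\ell-1}2}\right).\]
Grouping $(2n)^{k+r}\rho^{k+r+2}\prod_{\nu}(1-\nu\rho)^{-1}=\rho^2\,(2n\rho)^r\,\big(2^k\prod_\nu\tfrac{n}{1/\rho-\nu}\big)=\mathcal O(n^{-2})$ (using $2n\rho<1$ and the previous estimate), one is left with $\mathcal O\!\big(\tfrac{(3k)^\ell r\ell\,2^r}{(\ell-1)!}\,n^{-2}\,(2n)^{-(\ell-1)/2}\big)$. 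For $k=\mathcal O(\sqrt n)$ we have $(3k)^\ell(2n)^{-(\ell-1)/2}=\mathcal O\!\big(C^\ell n^{\ell/2-(\ell-1)/2}\big)=\mathcal O(C^\ell\sqrt n)$ for an absolute constant $C$, so the powers of $n$ collapse to $n^{-3/2}$; and $\tfrac{C^\ell 2^r r\ell}{(\ell-1)!}\le\tfrac{(2C)^\ell\ell^2}{(\ell-1)!}\to0$ because the factorial beats the exponential. Hence $\mu_n(\mathcal M_{k,\ell,r}^\Gamma)=\mathcal O(n^{-3/2})$, in fact $o(n^{-3/2})$.

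The step I expect to be the main obstacle is precisely the estimate of $\big(\rho^{k+\ell}\prod_\nu(1-\nu\rho)^{-1}\big)^{(\ell)}$ and its interplay with the exponentially large prefactor $(2n)^{k+r}$: a crude Cauchy bound for the $\ell$th derivative (taken on a circle of radius comparable to $1/k$) introduces a spurious factor of order $4^k$ that destroys the estimate, so one must retain the exact product $\prod_{\nu=1}^k(1-\nu\rho)^{-1}$ — equivalently the Stirling-number generating function — and cash it in against $n^k\rho^k$ through $\prod_\nu\tfrac{n}{1/\rho-\nu}$. Once that is done the bookkeeping is routine, the hypotheses being calibrated so that $k=\Omega(n^{1/4})$ leaves enough room to absorb the $\ell$-dependent overhead, $\ell=\mathcal O(n^{1/8})$ keeps that overhead sub-polynomial, and $1/(\ell-1)!$ soaks up the stray powers of $2$.
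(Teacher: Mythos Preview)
Your argument is correct and clean for the regime $k=\mathcal O(\sqrt n)$, but the lemma must hold for all $k=\Omega(n^{1/4})$, and in its applications (e.g.\ the proof of Theorem~\ref{thm:st}) $k$ ranges up to $n$. The gap is created at the very first step: by bounding $\binom{n-p}{k+r}\binom{k+r}{k}k!\,r!=(n-p)_{k+r}\le n^{k+r}$ you discard the factor $\prod_{\nu=0}^{k+r-1}\big(1-\tfrac{p+\nu}{n}\big)$, and this factor is precisely what kills the exponential growth when $k$ is large. Concretely, after your simplification you are left with $2^k\prod_{\nu=1}^k\tfrac{n}{1/\rho-\nu}=\prod_{\nu=1}^k\tfrac{2n\rho}{1-\nu\rho}$; for $k=cn$ with $c\in(0,1)$ one computes $\sum_{\nu=1}^k\log\tfrac{2n\rho}{1-\nu\rho}\sim 2n\!\int_0^{c/2}\!\log\tfrac1{1-u}\,du>0$, so this product blows up like $e^{\Theta(n)}$ and your bound fails. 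Likewise, for $k\gg\sqrt n$ the estimate $(3k)^\ell(2n)^{-(\ell-1)/2}=\mathcal O(C^\ell\sqrt n)$ is simply false.

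The paper meets exactly this difficulty by keeping the full falling factorial $(n)_k$ and treating $k>4\sqrt n$ separately: it bounds the $\ell$th derivative by a Cauchy estimate on a circle of radius $n^{-3/2}$ (gaining $n^{3\ell/2}$ uniformly in $k$, instead of your $k$-dependent $(3k)^\ell$) and then shows that
\[
\Big(1+\tfrac{2}{\sqrt n}\Big)^{k}\prod_{\nu=1}^{k}\frac{1-\tfrac{\nu}{n}}{1-\tfrac{\nu}{2n}\big(1+\tfrac{2}{\sqrt n}\big)}=\mathcal O(1)
\]
by pairing the numerator $1-\nu/n$ (coming from $(n)_k$) against the denominator. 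Your Stirling-number identity is a perfectly good replacement for the Cauchy step, but to close the argument for large $k$ you must retain the falling factorial and carry out an analogous cancellation; without it the proof covers only $n^{1/4}\lesssim k\lesssim\sqrt n$.
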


\begin{proof}
Turning back to \eqref{limratM} we see that the last factor is covered by Lemma~\ref{Klem}. In
order to estimate the penultimate factore, let $f(z) = z^{k+\ell}\prod_{\nu=1}^k \nicefrac1{(1-\nu z)}$ and consider a circle of radius $n^{-3/2}$
centered at $\rho$. Since $f(z)$ is a power series with non-negative coefficients, Cauchy's
estimate gives 
\[
\left|\frac{f^{(\ell)}(\rho)}{\ell!}\right|\le f\(\rho+\frac1{n^{\nicefrac32}}\) 
n^{\nicefrac{3\ell}2}. 
\]
Now, performing the substitution $z=\rho+\frac1{n^{3/2}}$ into $f(z)$ on the right-hand
side, using the monotonicity of $f$ and the inequality $\rho<\nicefrac1{2n}$ (\emph{cf.}
Proposition~\ref{prop:sing}), we obtain
\begin{align*} 
\left|\frac{f^{(\ell)}(\rho)}{\ell!}\right|&=\(\rho+\frac1{n^{\nicefrac32}}\)^{k+\ell}
n^{\nicefrac{3\ell}2} \prod_{\nu=1}^k\frac1{1-\nu\(\rho+\frac1{n^{\nicefrac32}}\)} \\
&\le 
\(\frac1{2n}\)^{k+\ell}  \(1+\frac2{\sqrt n}\)^{k+\ell} 
n^{\nicefrac{3\ell}2} \prod_{\nu=1}^k\frac1{1-\frac{\nu}{2n}\(1+\frac2{\sqrt n}\)}.
\end{align*} 
Thanks to~\eqref{eq:M}, we get  
\begin{align*}
\lim_{z\to\rho} \frac{M'(z)}{A'(z)} 
& \leq \rho^{r+2} \binom{n-p}{k+r}\binom{k+r}{k} 2^{k+r} k!r!
\frac{f^{(\ell)}(\rho)}{(\ell-1)!} \lim_{z\to\rho}\frac{K'(z)}{A'(z)}  \\
&\le \ell
\(\frac1{2n}\)^{k+\ell+r+2} 2^{k+r} (n)_k n^r \(1+\frac2{\sqrt n}\)^{k+\ell} 
n^{\nicefrac{3\ell}2}\prod_{\nu=1}^k\frac1{1-\frac{\nu}{2n}\(1+\frac2{\sqrt n}\)}
\lim_{z\to\rho}\frac{K'(z)}{A'(z)}
 \\
&\le \ell
e^2 2^{-\ell-2}\cdot n^{\nicefrac{\ell}2-2} \(1+\frac2{\sqrt n}\)^{k}
\prod_{\nu=1}^k\frac{1-\frac \nu n}{1-\frac{\nu}{2n}\(1+\frac2{\sqrt n}\)}
\lim_{z\to\rho}\frac{K'(z)}{A'(z)}.
\end{align*} 
The last product is bounded by 1 and if $k=\Ord (\sqrt n)$, then $\(1+\frac2{\sqrt n}\)^{k}$ is
bounded as well. In this case we get 
\begin{equation} \label{MAbound}
\lim_{z\to\rho} \frac{M'(z)}{A'(z)} = \Ord\(\ell 2^{-\ell} n^{\nicefrac{\ell}2-2} \lim_{z\to\rho}\frac{K'(z)}{A'(z)} \)
\end{equation} 
If $k>4\sqrt n$ then it suffices to show that $\(1+\frac2{\sqrt n}\)^k
n^{\nicefrac{3\ell}2}\prod_{\nu=1}^k\frac1{1-\frac{\nu}{2n}\(1+\frac2{\sqrt n}\)}$ is bounded. We
proceed as follows: First observe that 
\[
\frac{1-\frac \nu n}{1-\frac{\nu}{2n}\(1+\frac2{\sqrt n}\)} < 1-\frac{\nu}{2n}\(1-\frac2{\sqrt n}\)
\]
and thus we can write 
\begin{align*} 
\(1+\frac2{\sqrt n}\)^{k}
\prod_{\nu=1}^k\frac{1-\frac \nu n}{1-\frac{\nu}{2n}\(1+\frac2{\sqrt n}\)} &= \(1+\frac2{\sqrt
n}\)^{4\lfloor\sqrt n\rfloor} \prod_{\nu=1}^{4\lfloor\sqrt n\rfloor} \frac{1-\frac \nu
n}{1-\frac{\nu}{2n}\(1+\frac2{\sqrt n}\)} 
\\ 
&\qquad\times  
\(1+\frac2{\sqrt n}\)^{k-4\lfloor\sqrt n\rfloor} 
\prod_{\nu=4\lfloor\sqrt n\rfloor+1}^{k} \frac{1-\frac \nu n}{1-\frac{\nu}{2n}\(1+\frac2{\sqrt
n}\)} \\ 
&\le e^8  \left[\(1+\frac2{\sqrt n}\)
\(1-\frac{2}{\sqrt n}\(1-\frac2{\sqrt n}\)\)\right]^{k-4\lfloor\sqrt n\rfloor} \\
&= e^8  \(1+\frac8{n^{\nicefrac32}}\)^{k-4\lfloor\sqrt n\rfloor} = \Ord(1).
\end{align*} 

Finally, if we let $n$ tend to infinity and apply Lemma~\ref{Klem}, we get 
\[\mu_n(\mathcal{M}_{k,\ell,r}^{\Gamma}) = \mathcal{O}\left(r\ell^2 2^{r-\frac{3\ell-1}2}
n^{-\nicefrac32}\right) = \mathcal{O}\left(\frac1{n^{\nicefrac32}}\right).\qedhere\]
\end{proof}

\begin{lem}
\label{lem:small_k}
The limiting ratio of trees with fewer than $n^{\nicefrac14}$ different labels appearing on the first level leaves is $\mathcal{O}\left(\frac1{\sqrt{n}}\right)$.
\end{lem}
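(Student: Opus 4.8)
The plan is to bound the limiting ratio of the family $\mathcal A^{<n^{1/4}}$ of trees having at most $n^{\nicefrac14}-1$ distinct labels on the first level, by splitting according to where the ``missing'' variety of labels is hidden. A tree with few distinct first-level labels but large size must push its complexity into the non-leaf subtrees hanging off the root; the key observation is that such a tree, once we record the set of $k<n^{1/4}$ literals $\{\alpha_1,\dots,\alpha_k\}$ occurring on the first level and the multiset of non-leaf subtrees, fits (up to the usual over-counting that only helps since we want an upper bound) into a union of the families $\mathcal M_{k,\ell,r}^{\Gamma}$ studied above, with $\Gamma=\emptyset$ (so $p=0$) and $r$ measuring how many genuinely new labels appear one level deeper. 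More precisely, I would first dispose of the trees whose root has only a bounded number of non-leaf subtrees: if the root has $\ell$ non-leaf subtrees and $\ell$ stays bounded (say $\ell\le \ell_0$), then a leaf-heavy large tree must have one of those $\ell$ subtrees be itself large, and by the same decomposition applied recursively — or directly by noting such a configuration forces a $\land$-rooted subtree of the type counted by $J(z)$ with $r$ new labels — its generating function is $\Ord(z\cdot \hat B(z)^{\ell-1}\cdot(\text{large subtree}))$, whose limiting ratio is $\Ord(\hat B(\rho))=\Ord(1/\sqrt n)$ by Proposition~\ref{prop:sing}.

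The heart of the argument is then a summation over $k$ and $\ell$: for each admissible $k<n^{\nicefrac14}$ and each $\ell$, the trees in question are contained in $\bigcup_r \mathcal M_{k,\ell,r}^{\emptyset}$ where $r\le\ell$ (a non-leaf subtree cannot introduce more than $\ell$-many ``new'' labels that matter for the decomposition, since anything beyond that gets absorbed into later levels and handled by induction on depth — alternatively, one truncates at $r=\ell$ and treats the tail separately). Here the hypotheses of Lemma~\ref{lem:Mklr} are met: $k=\Omega(n^{\nicefrac14})$ is replaced by the complementary range $k=\Ord(n^{\nicefrac14})$, but one checks the bound $\mu_n(\mathcal M_{k,\ell,r}^{\Gamma})=\Ord(n^{-\nicefrac32})$ in Lemma~\ref{lem:Mklr} in fact only used $k=\Ord(\sqrt n)$ and $\ell=\Ord(n^{\nicefrac18})$, both of which hold here. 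Summing the bound $\Ord(r\ell^2 2^{r-(3\ell-1)/2}n^{-\nicefrac32})$ over $0\le r\le\ell$ and then over $\ell\ge 1$ gives a geometric series in $\ell$ (the factor $2^{-(3\ell-1)/2}$ dominates the polynomial $\ell^3$ and the $2^r\le 2^\ell$), so the sum over $\ell$ converges to $\Ord(n^{-\nicefrac32})$; multiplying by the at most $n^{\nicefrac14}$ choices of $k$ and by the $\Ord(n^{\nicefrac14})$-type combinatorial prefactor already absorbed into Lemma~\ref{lem:Mklr} still leaves $\Ord(n^{-\nicefrac54})=\Ord(n^{-1/2})$, comfortably within the claimed bound; but in fact the dominant contribution is not from this regime at all.

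The genuinely dominant term — and what forces the bound to be exactly $\Ord(1/\sqrt n)$ rather than something smaller — comes from the case $\ell=1$ with $r=0$: a tree whose root ($\lor$, say) has a bounded set of first-level literal leaves and exactly one non-leaf subtree $t'$ ($\land$-rooted), where $t'$ carries the bulk of the size. Its generating function has the shape $2z\cdot(\text{first-level leaf part})\cdot \hat B(z)$ up to corrections, and $\mu_n$ of such a family is governed by $\hat B(\rho)=1/\sqrt{2n}+\Ord(1/n)$ by Proposition~\ref{prop:sing}, which is precisely $\Theta(1/\sqrt n)$. So one assembles: $\mu_n(\mathcal A^{<n^{1/4}})\le$ (the $\ell=1$ contribution, $\Ord(1/\sqrt n)$) $+$ (the $\ell\ge 2$, $k<n^{1/4}$ contribution bounded via Lemma~\ref{lem:Mklr} by $\Ord(n^{-\nicefrac54})$) $=\Ord(1/\sqrt n)$. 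The main obstacle I anticipate is making the reduction ``few distinct first-level labels on a large tree $\Rightarrow$ membership in some $\mathcal M_{k,\ell,r}^{\emptyset}$'' fully rigorous without double-counting blowing up the combinatorial prefactor beyond what Lemma~\ref{lem:Mklr} can absorb — in particular, correctly bounding $r$ (the number of ``new'' labels one level down) by $\ell$ and justifying that the deeper structure needs no separate treatment because it is either absorbed into $\hat B(z)^{\ell-1}$ (which is exactly what $\hat B$ counts: all $\land$-rooted associative trees) or falls under the inductive case $\ell=1$.
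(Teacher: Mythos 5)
Your proposal has a genuine gap at its central step. The claimed containment of the family of trees with $k<n^{\nicefrac14}$ distinct first-level labels in $\bigcup_{r\le \ell}\mathcal{M}^{\emptyset}_{k,\ell,r}$ is false. Membership in $\mathcal{M}^{\Gamma}_{k,\ell,r}$ requires that at least one non-leaf subtree of the root introduce at most $r\le\ell$ \emph{new} labels on its own first level, and in the paper this structural fact is only ever derived from a \emph{semantic} hypothesis (the tree is a tautology, or computes a function $\ge f_0$): one assigns the putative new variables to $\False$ one at a time and derives a contradiction. For an arbitrary tree that merely has few distinct labels on the first level there is no such constraint -- every one of the $\ell$ non-leaf subtrees may introduce arbitrarily many new labels at depth two -- so the reduction you describe (and which you yourself flag as the main obstacle) cannot be repaired by ``truncating at $r=\ell$'' or by ``induction on depth''; the set you want to bound is simply not contained in the union you propose to bound it by. Since everything downstream (the application of Lemma~\ref{lem:Mklr} and the summation over $k,\ell,r$) rests on this containment, the argument does not go through.

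The paper's proof is much more elementary and makes no use of the $\mathcal{M}^{\Gamma}_{k,\ell,r}$ machinery: it writes down directly the generating function of trees with \emph{exactly} $k$ distinct first-level labels,
\[
G_k(z)=\binom{n}{k}2^k k!\, z\prod_{m=0}^{k}\frac{z}{1-mz-\hat B(z)},
\]
obtained by choosing the $k$ literals and their order of first appearance and decomposing the first level as a sequence of already-seen labels and non-leaf subtrees between consecutive first occurrences; Lemma~\ref{lem:limiting_ratio} and the estimates of Proposition~\ref{prop:sing} then give $\mu_n$ of this family as $\mathcal{O}(k/n)$, and summing over $1\le k\le \lfloor n^{\nicefrac14}\rfloor$ yields $\mathcal{O}(n^{\nicefrac12}/n)=\mathcal{O}(1/\sqrt n)$. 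This also shows that your identification of the dominant contribution is off: the $1/\sqrt n$ comes from $k$ of order $n^{\nicefrac14}$, not from an $\ell=1$ configuration ``governed by $\hat B(\rho)$'' -- note that the limiting ratio of a family with generating function $2zP(z)\hat B(z)$, $P$ a polynomial, is $2\rho P(\rho)\lim_{z\to\rho}\hat B'(z)/A'(z)=\rho P(\rho)$, in which the factor $\hat B(\rho)\sim 1/\sqrt{2n}$ does not appear at all. (The lemma in any case asserts only an upper bound, so no lower-bound discussion is needed.)
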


\begin{proof}
The generating function of trees with exactly $k$ different labels appearing on the first level (with no occurrence of a variable and its negation) is given by
\[G_k(z) = \begin{pmatrix}n\\k\end{pmatrix}2^k k! z \prod_{m=0}^{k}\frac{z}{1-mz-\hat{B}(z)},\]
and therefore, their limiting ratio is given by
\begin{align*}
\lim_{z\to\rho} \frac{G'_k(z)}{A'(z)}
&= \begin{pmatrix}n\\k\end{pmatrix}2^k k! \rho
\prod_{m=0}^{k}\frac{\rho}{1-m\rho-\hat{B}(\rho)} \sum_{m=0}^k \frac1{1-m\rho-\hat{B}(\rho)} \cdot
\lim_{z\to\rho} \frac{\hat{B}'(z)}{A'(z)}\\
&\leq \frac1{4n} \prod_{m=0}^{k-1} \left(1-\frac{m}{n}\right) \prod_{m=0}^{k}\frac1{1-\frac{m}{2n}-\frac1{\sqrt{2n}}} \left(\sum_{m=0}^k \frac1{1-\frac{m}{2n}-\frac1{\sqrt{2n}}}\right)\\
&\leq \frac1{4n} \left(\frac1{1-\frac{k}{2n}-\frac1{\sqrt{2n}}}\right)^{k+1} (k+1) \frac1{1-\frac{k}{2n}-\frac1{\sqrt{2n}}}
= \mathcal{O}\left(\frac{k}{n}\right).
\end{align*}
Summing up over all $k$ from 1 to $\lfloor n^{1/4}\rfloor$ yields the result. 
\end{proof}

\begin{lem}
\label{lem:big_l}
The limiting ratio of the family of all trees where the root has more than $n^{\nicefrac18}$ non-leaf subtrees is $\Theta\left(\frac{n^{\nicefrac18}}{2^{n^{\nicefrac18}}}\right)$.
\end{lem}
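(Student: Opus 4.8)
The plan is to write down the generating function for trees whose root has exactly $\ell$ non-leaf subtrees, sum over $\ell>n^{1/8}$, and extract the limiting ratio via Lemma~\ref{lem:limiting_ratio}. A tree with an $\land$-root (the $\lor$-case is symmetric and contributes the same, giving an overall factor $2$) and exactly $\ell$ non-leaf subtrees is built from a root, an arbitrary sequence of first-level leaves (generating function $\frac1{1-2nz}$), and $\ell$ non-leaf subtrees, each with generating function $\hat B(z)$, interleaved among the leaves. Counting the interleaving positions, the generating function of trees (both root types) with exactly $\ell\ge1$ non-leaf subtrees is something like
\[
H_\ell(z) = 2z\,\binom{?}{?}\cdots
\]
more precisely $H_\ell(z)=2z\,\hat B(z)^\ell\,\bigl(\text{sequence-of-leaves with }\ell+1\text{ slots}\bigr)$, i.e. $H_\ell(z)=2z\,\hat B(z)^\ell/(1-2nz)^{\ell+1}$ up to the degree-two correction already familiar from Proposition~\ref{prop:A_alpha}. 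Then $\sum_{\ell>n^{1/8}}H_\ell(z)$ is a geometric-type tail.

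Next I would apply Lemma~\ref{lem:limiting_ratio} termwise: $\mu_n$ of the family with exactly $\ell$ non-leaf subtrees equals $\lim_{z\to\rho}H_\ell'(z)/A'(z)$. Since only the factor $\hat B(z)^\ell$ and the sequence factor vary with $\ell$, differentiating and using $\lim_{z\to\rho}\hat B'(z)/A'(z)=\tfrac12$ and $\lim_{z\to\rho}1/A'(z)=0$ (the latter kills all terms where the derivative does not hit $\hat B$), one gets a closed form of the shape $\mu_n(\{\ell\text{ non-leaf subtrees}\}) = c_\ell(n)\,\hat B(\rho)^{\ell-1}$ where $c_\ell(n)$ is a polynomially-in-$\ell$ and $n$ bounded prefactor coming from $2z\cdot\ell\cdot(\text{value at }\rho)$ and from the sequence factor $1/(1-2n\rho)^{\ell+1}$ evaluated at $\rho$. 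Here I would use Proposition~\ref{prop:sing}: $\hat B(\rho) = \tfrac1{\sqrt{2n}}+\Ord(\tfrac1n)$, so $\hat B(\rho)^{\ell-1}\asymp (2n)^{-(\ell-1)/2}$, and $2n\rho = 1 - \sqrt{2/n}+\Ord(1/n)$, so $1/(1-2n\rho)^{\ell+1}\asymp (n/2)^{(\ell+1)/2}$. Multiplying, the two exponential factors in $n$ partly cancel and one is left with a bounded-in-$n$ geometric ratio for each fixed $\ell$, but crucially the ratio between consecutive $\ell$ is bounded away from $1$, so that summing over $\ell > n^{1/8}$ the tail is dominated by its first term, of order $n^{O(1)}\cdot q^{n^{1/8}}$ for some constant $q<1$, which one rewrites as $\Theta\bigl(n^{1/8}/2^{n^{1/8}}\bigr)$ after absorbing constants.

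To nail the $\Theta$ (not just $\Ord$) I would also produce a matching lower bound: pick a single explicit subfamily inside the family of trees with exactly $\lceil n^{1/8}\rceil$ non-leaf subtrees — say all first-level leaves labelled $x_1$ and all non-leaf subtrees of a fixed small shape — whose limiting ratio is already $\Theta(n^{1/8}/2^{n^{1/8}})$, so the full family's ratio is bounded below by the same order. The upper bound comes from the tail sum above.

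The main obstacle I expect is not the asymptotics of any individual term but controlling the sum over $\ell$ uniformly: one must check that the per-$\ell$ prefactor $c_\ell(n)$ grows only polynomially in $\ell$ (it does, being essentially $\ell$ times a sequence-factor that is itself $(n/2)^{(\ell+1)/2}$), and that after the cancellation $\hat B(\rho)^{\ell-1}(1-2n\rho)^{-(\ell+1)}$ the effective base is a genuine constant $q<1$ independent of $n$ in the relevant range — this requires being a little careful with the $\Ord(1/\sqrt n)$ corrections in $\hat B(\rho)$ and $2n\rho$, exactly as in the $e^8$-type manipulations at the end of the proof of Lemma~\ref{lem:Mklr}. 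Interchanging the limit $z\to\rho$ with the (finite, but $n$-dependent) sum over $\ell$ is legitimate since each summand is a rational or algebraic function with the single dominant singularity $\rho$, so Lemma~\ref{lem:limiting_ratio} applies to the sum directly.
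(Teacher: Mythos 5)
Your proposal follows essentially the same route as the paper: the generating function $H_\ell(z)=2z\,\hat B(z)^\ell/(1-2nz)^{\ell+1}$, the termwise limiting ratio via Lemma~\ref{lem:limiting_ratio} using $\lim_{z\to\rho}\hat B'(z)/A'(z)=\nicefrac12$ and $\lim_{z\to\rho}1/A'(z)=0$, the cancellation $\hat B(\rho)^{\ell-1}(1-2n\rho)^{-(\ell+1)}\sim n\,2^{-\ell}$ giving a per-$\ell$ ratio $\sim \ell/2^{\ell+1}$, and summation of the tail over $\ell>n^{\nicefrac18}$. The one phrase to tighten is ``of order $n^{O(1)}q^{n^{1/8}}$ for some constant $q<1$\ldots after absorbing constants'': the base of an exponential cannot be absorbed into constants, so to land on $\Theta\bigl(n^{\nicefrac18}/2^{n^{\nicefrac18}}\bigr)$ you must record that your own cancellation yields $q=\nicefrac12$ exactly, after which the tail sum $\sum_{\ell\ge n^{1/8}}\ell 2^{-\ell-1}$ gives both bounds and the separate lower-bound subfamily is unnecessary.
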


\begin{proof}
The generating function of the trees with exactly $\ell$ non-leaf subtrees is
\[H_{\ell}(z) = 2z\frac{\hat{B}^{\ell}(z)}{(1-2nz)^{\ell+1}}.\]
Therefore, the limiting ratio of trees with exactly $\ell$ non-leaf subtrees is given by
\begin{align*}
\lim_{z\to\rho}\frac{H'_{\ell}(z)}{A'(z)} 
&= \frac1{n} \frac{\ell \hat{B}^{\ell-1}(\rho)}{(1-2n\rho)^{\ell+1}}\cdot \lim_{z\to\rho} \frac{\hat{B}'(z)}{A'(z)}
\sim \frac1{2n} \frac{\ell \left(\frac1{\sqrt{2n}}\right)^{\ell-1}}{\left(\sqrt{\frac2{n}}\right)^{\ell+1}}\\
&\sim \frac1{2n} \frac{\ell \left(\frac1{\sqrt{2n}}\right)^{\ell-1}}{\left(\sqrt{\frac2{n}}\right)^{\ell+1}}
\sim \frac{\ell}{2^{\ell+1}}, \text{ as }\nti.
\end{align*}
This implies that the limiting ratio of trees with more than $n^{\nicefrac18}$ non-leaf subtrees is given by
\[\sum_{\ell\geq n^{\nicefrac18}} \frac{\ell}{2^{\ell+1}} =
\Theta\left(\frac{n^{\nicefrac18}}{2^{n^{\nicefrac18}}}\right).\qedhere\]
\end{proof}

\section{Tautologies}
\label{part:tauto}

This section deals with tautologic trees and states the first statement
of Theorem~\ref{thm:general} about the probability of constant functions. 
This is not only a
particular case, but also the first step to prove the whole theorem. The first subsection is
devoted to prove the lower bound, the second one to prove the upper bound and the last one to
prove that \emph{almost all tautologies have a very simple shape}.

\subsection{A non-negligible family of tautologies.}
\label{alpha}
In this section, we define a set of ``simple'' tautologies and find a lower bound for the limiting
ratio of this family of tautologies.

\begin{df}
A {\bf simple tautology} (resp. a simple contradiction) realized by the variable $x$ is an $\lor$-rooted
(resp. $\land$-rooted) tree such that both of the labels $x$ and $\bar{x}$ appear on the first level of the tree.
The set of simple tautologies will be denoted by $\mathcal{S}_n$, its complement by
$\overline{\mathcal{S}_n}$.
\end{df}

\begin{lem}
\label{lem:alpha}
There exists a constant $\alpha$ such that, for all $n\geq 0$, 
\[0 < \alpha \leq \mathbb{P}_n(\True).\] 
\end{lem}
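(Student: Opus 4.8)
The plan is to bound $\mathbb{P}_n(\True)$ from below by the limiting ratio of the subfamily $\mathcal{S}_n$ of simple tautologies, since every simple tautology computes the constant function $\True$, and hence $\mathbb{P}_n(\True)\geq \mu_n(\mathcal{S}_n)$ provided the latter limit exists. It therefore suffices to show that $\liminf_{n\to\infty}\mu_n(\mathcal{S}_n)>0$, and in fact to produce an explicit positive constant lower bound valid for all $n\geq 0$ (small values of $n$ being handled separately if needed, or absorbed into the constant).

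First I would set up the generating function for $\mathcal{S}_n$. A simple tautology realized by a fixed variable $x$ is an $\lor$-rooted tree with at least one leaf labelled $x$ and at least one leaf labelled $\bar x$ on the first level. Using the decomposition of the first level as an ordered sequence of subtrees — each being either a $\land$-rooted subtree (generating function $\hat B(z)$), or one of the $2n$ leaf labels (generating function $2nz$), split off the two distinguished labels $x$ and $\bar x$ — together with an inclusion-exclusion over the choice of realizing variable, one gets an explicit rational-in-$\hat B(z)$ expression for $S_n(z)$, the generating function of $\mathcal{S}_n$. The singularity is again $\rho$ of square-root type, so by Lemma~\ref{lem:limiting_ratio} we have $\mu_n(\mathcal{S}_n)=\lim_{z\to\rho} S_n'(z)/A'(z)$, which reduces, after factoring out $\lim_{z\to\rho}\hat B'(z)/A'(z)=\tfrac12$ (the only genuinely singular piece), to an explicit rational expression in $\rho$, $\hat B(\rho)$, and $n$.

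Next I would estimate that expression using Proposition~\ref{prop:sing}: $\rho\sim \tfrac1{2n}$ and $\hat B(\rho)\sim \tfrac1{\sqrt{2n}}$. The heuristic is that having a leaf labelled by a fixed literal $x$ on the first level costs a factor of order $2nz\big|_{z=\rho}\sim 1$ relative to the typical subtree weight, so requiring both $x$ and $\bar x$ for \emph{some} $x$ — of which there are $n$ choices — should give a limiting ratio of order $1$. The computation should yield $\mu_n(\mathcal{S}_n)\to c$ for some explicit constant $c\in(0,\tfrac12)$ as $n\to\infty$ (one expects something like $c$ related to $1-e^{-\text{const}}$ coming from the Poisson-type count of distinct realizing variables suggested by the remark after Proposition~\ref{prop:first_level_leaves}). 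Taking $\alpha$ to be, say, half of this limiting constant, and checking the finitely many (or uniformly bounded) small-$n$ cases directly, gives the claim.

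The main obstacle I anticipate is purely bookkeeping: writing $S_n(z)$ correctly with the right inclusion-exclusion over the realizing variable so as not to over- or under-count trees where several variables appear together with their negations, and then extracting the limit $\lim_{z\to\rho}S_n'(z)/A'(z)$ cleanly. The analytic input (square-root singularity, transfer, the asymptotics of $\rho$ and $\hat B(\rho)$) is all already available; no new singularity analysis is needed. A secondary subtlety is ensuring the bound is uniform in $n$ down to small $n$ rather than merely asymptotic — but since $\mathcal{S}_n$ is nonempty for every $n\geq 1$ and every $m\geq 3$ (cf.\ the a-periodicity argument in Section~\ref{part:existence}), $\mu_n(\mathcal{S}_n)>0$ for each fixed $n$, so combining this with the asymptotic lower bound furnishes a single constant $\alpha>0$ working for all $n$.
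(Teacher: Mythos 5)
Your overall strategy coincides with the paper's: lower-bound $\mathbb{P}_n(\True)$ by the limiting ratio of the family $\mathcal{S}_n$ of simple tautologies and show that this ratio is bounded below by a positive constant uniformly in $n$. The divergence is in how that positivity is established, and this is where your proposal has a genuine gap: the entire quantitative content of the lemma is deferred to a computation that you describe as ``purely bookkeeping'' but which, along the route you propose, is not. An inclusion--exclusion over the realizing variable does not truncate favourably here. Writing $E_i$ for the event that both $x_i$ and $\bar x_i$ appear on the first level, one finds (using Proposition~\ref{prop:first_level_leaves} and the $\Gamma(2,\nicefrac12)$ limit law for the number of first-level leaves, which is of order $\sqrt n$) that $\sum_i \mu_n(E_i)$ tends to a constant close to $3$ while $\sum_{i<j}\mu_n(E_i\cap E_j)$ tends to a strictly larger constant; hence the second Bonferroni bound is negative and useless, and one is forced to control the full alternating series. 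The complementary route --- summing the generating functions $G_k(z)$ of trees with $k$ distinct, consistent first-level labels over \emph{all} $k\le n$ and showing the total stays bounded away from the ratio of all $\lor$-rooted trees --- requires uniform asymptotics in $k$ around the dominant range $k=\Theta(\sqrt n)$; the crude bound $\mathcal{O}(k/n)$ of Lemma~\ref{lem:small_k} summed over all $k$ is useless. Neither difficulty is addressed, and the claimed conclusion ``$\mu_n(\mathcal{S}_n)\to c>0$'' is asserted from a heuristic rather than derived.

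The paper sidesteps all of this with one structural idea that your proposal is missing: restrict to the subfamily $\mathcal{R}$ of $\lor$-rooted trees whose $\lfloor\sqrt n\rfloor$ leftmost root-children are leaves (plus one non-leaf subtree). Its generating function is an explicit product, its limiting ratio is computed directly to be $e^{-\sqrt 2}/8+o(1)>0$, and --- conditionally on membership in $\mathcal{R}$ --- the labels of those $\lfloor\sqrt n\rfloor$ leaves are exchangeable and uniform, so the birthday paradox shows that a fraction bounded away from zero of them contain a complementary pair $x,\bar x$, i.e.\ are simple tautologies. This converts the delicate global computation of $\mu_n(\mathcal{S}_n)$ into an easy generating-function evaluation plus an elementary probabilistic estimate. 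Your handling of small $n$ (positivity of $\mu_n(\mathcal{S}_n)$ for each fixed $n$, combined with the asymptotic bound) is fine and matches the paper, but as it stands the asymptotic half of your argument is a plan whose hardest step is unexecuted and whose most natural execution (truncated inclusion--exclusion) would fail.
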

\begin{proof}
Let $\mc{R}$ be the family of trees such that their root is labelled by $\lor$, containing
at least $\lfloor \sqrt{n}\rfloor$ leaves on the first level and a single $\land$--rooted tree attached to the root.
Furthermore, the $\lfloor \sqrt{n}\rfloor$ leftmost children of the root are leaves.
The generating function $R(z)$ satisfies
\[R(z) = \frac{z\cdot (2nz)^{\lfloor \sqrt{n}\rfloor}}{1-2nz} \cdot \hat{B}(z) \cdot \frac{1}{1-2nz}.\]
The generating function $R(z)$ has the same dominant singularity as $\hat{A}(z)$ and thus the same as $A(z)$.
By using Lemma~\ref{lem:limiting_ratio}, we get
\[\frac{R'(z)}{A'(z)} = \frac{\rho (2n\rho)^{\lfloor \sqrt{n}\rfloor}}{(1-2n\rho)^2} \cdot \lim_{z\to \rho}\frac{\hat{B}'(z)}{A'(z)} = \frac{e^{-\sqrt{2}}}{8} + o(1).\]
Thus, the family $\R$ contains a positive ratio of all trees. In order to conclude, the last idea
consists of using the birthday paradox (see e.g.~\cite{FGT92} and the references therein)
in order to prove that a positive ratio of trees from $\R$ is obtained by simple tautologies.
The probability that among the $\lfloor \sqrt{n}\rfloor$ leftmost leaves, of the first level, there is no repetition of variables is
$n(n-1)\cdot (n-\lfloor \sqrt{n}\rfloor+1) / n^{\lfloor \sqrt{n}\rfloor}$. Thus
by using Stirling formula this probability tends to $\nicefrac1e$.
Consequently, for $n$ sufficiently large, with probability larger than $\nicefrac12$ there is at least
one repetition of variables among the $\lfloor \sqrt{n}\rfloor$ leftmost leaves of the first level.
Let us take into account the two first (from left to right) occurrences of the same variable,
with probability $\nicefrac12$ they are labelled by two opposite literals, and thus the whole tree is a simple tautology.

Finally, when $n$ tends to infinity, the probability to be a simple tautology is positive. 
Furthermore, the previous results also hold for small value of $n$, which concludes the proof.
\end{proof}

The following investigations aim at deriving a 
numerical lower bound for the probability of the set of simple tautologies. 
In order to get such a result, we utilized a computer algebra software to perform some routine
calculations. In these instances, we will omit the details.

Let $\E^k$ be the set of trees rooted by $\lor$, with exactly $k$ leaves at depth~1 and at most 5
non-leaf subtrees.
Let us take $k$ in $M:=\{\lfloor\sqrt{n}\rfloor,\dots, 15\lfloor\sqrt{n}\rfloor\}$. 
For two generating functions $f, g \in \IR [[z]]$, we write $f \prec g$ (resp. $f \succ g$) if $[z^{r}]f \leqslant [z^{r}]g$ (resp. $[z^{r}]f \geqslant [z^{r}]g$)
for all $r \in \IN$.
The generating function of $\E = \bigcup_{k\in M} \E^k$ satisfies
\[E(z) \succ \sum_{j=1}^5\sum_{k=\lfloor\sqrt{n}\rfloor}^{15\lfloor\sqrt{n}\rfloor} z \cdot (2nz)^k \cdot \frac{(k+1)^j \hat{B}^j(z)}{j!}.\]
The latter generating function is not exactly $E(z)$ because of the approximation 
$(k+1)\cdots(k+j) \sim (k+1)^j$. Using the approximated form to compute the limiting ration, we
obtain after some computations: $\mu_n(\E) \geqslant 0.36618 + o(1)$ for $n$ tending to infinity.

Let us now introduce two subfamilies of $\E^k$. The first family $\E^k_1$ contains all trees of
$\E^k$ 
such that
\begin{enumerate}
\item[(a)] the set $\L$ of all labels appearing among the $\lfloor\sqrt{n}\rfloor$ 
leftmost leaves at depth~1 is of cardinality at least~$\nicefrac{\sqrt{n}}{2}$, 
\item[(b)] and the $k-\lfloor\sqrt{n}\rfloor$ rightmost leaves of depth~1 are labelled by literals whose negation
does not belong to $\L$.
\end{enumerate}
The family $\E^k_2$ contains all trees of $\E^k$ such that
\begin{enumerate}
\item[(a')] the set $\L'$ of all labels appearing among the $\lfloor\sqrt{n}\rfloor$ leftmost leaves at depth~1 is of cardinality at most $\nicefrac{\sqrt{n}}{2}$
\item[(b)] and the $k-\lfloor\sqrt{n}\rfloor$ rightmost leaves of depth~1 are labelled by literals whose negation
does not belong to $\L'$.
\end{enumerate}

The following lemma comes from the fact that a tautology which is not simple fulfills either
condition (a) or (a') and always satisfies condition (b) since a variable and its negation cannot appear in the first level of the tree. 
Therefore, a non-simple tautology is either in $\E^k_1$ or in $\E^k_2$. The same applies to trees
which do not even represent tautologies. Thus we obtain the following result: 
\begin{lem}
We have $\E \cap \overline{\mathcal{S}_n} \subseteq \bigcup_{k\in M} (\E^k_1 \cup \E^k_2)$.
\end{lem}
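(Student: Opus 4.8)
The statement to prove is:

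\begin{lem}
We have $\E \cap \overline{\mathcal{S}_n} \subseteq \bigcup_{k\in M} (\E^k_1 \cup \E^k_2)$.
\end{lem}

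\textbf{Proof plan.} The plan is to take an arbitrary tree $t \in \E \cap \overline{\mathcal{S}_n}$ and show that it lies in $\E^k_1 \cup \E^k_2$ for the appropriate value of $k \in M$. First I would note that, by definition of $\E = \bigcup_{k\in M}\E^k$, the tree $t$ belongs to $\E^k$ for some $k \in M$; in particular $t$ is $\lor$-rooted, has exactly $k$ leaves at depth $1$, and has at most $5$ non-leaf subtrees. Since $k \geq \lfloor\sqrt n\rfloor$, the first $\lfloor\sqrt n\rfloor$ leftmost depth-$1$ leaves are well defined, and we may speak of the set $\L$ (equivalently $\L'$) of labels they carry. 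This set has cardinality either at least $\nicefrac{\sqrt n}{2}$ or at most $\nicefrac{\sqrt n}{2}$, which is exactly the dichotomy between condition (a) and condition (a'); so $t$ satisfies one of these two conditions automatically, and it only remains to check that $t$ satisfies condition (b) in both cases.

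The key point for condition (b) is that $t \in \overline{\mathcal{S}_n}$, i.e.\ $t$ is not a simple tautology. Recall that a simple tautology realized by a variable $x$ is an $\lor$-rooted tree in which both $x$ and $\bar x$ appear on the first level. I would argue as follows: suppose for contradiction that one of the $k - \lfloor\sqrt n\rfloor$ rightmost depth-$1$ leaves carries a literal whose negation lies in $\L$ (respectively $\L'$). That negated literal, being in $\L$, is the label of one of the $\lfloor\sqrt n\rfloor$ leftmost depth-$1$ leaves. Hence both a literal and its negation appear among the depth-$1$ leaves of $t$, so $t$ would be a simple tautology, contradicting $t \in \overline{\mathcal{S}_n}$. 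Therefore condition (b) holds, and $t \in \E^k_1$ if $|\L| \geq \nicefrac{\sqrt n}{2}$, or $t \in \E^k_2$ if $|\L'| \leq \nicefrac{\sqrt n}{2}$. Since $k \in M$, this places $t$ in $\bigcup_{k\in M}(\E^k_1 \cup \E^k_2)$, as desired.

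The only subtlety worth spelling out --- and what I would treat as the ``main obstacle'', though it is a mild one --- is the boundary case in the dichotomy, where $|\L| = \nicefrac{\sqrt n}{2}$ exactly (when $\sqrt n$ is such that this is an integer): there both (a) and (a') hold simultaneously, so $t$ lies in $\E^k_1 \cap \E^k_2$, which is harmless for an inclusion statement. I would also remark explicitly that the argument does not use that $t$ is a tautology at all: the same reasoning shows that \emph{any} tree in $\E$ that is not a simple tautology lies in $\bigcup_{k\in M}(\E^k_1\cup\E^k_2)$, which is the slightly stronger fact the surrounding text invokes (``The same applies to trees which do not even represent tautologies''). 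This completes the proof.
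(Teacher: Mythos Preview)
Your proposal is correct and follows essentially the same approach as the paper: the dichotomy on $|\L|$ handles conditions (a)/(a'), and the assumption $t\notin\mathcal{S}_n$ forces condition (b) since otherwise a literal and its negation would both appear at depth~1. You also correctly observe, as the paper does, that being a tautology plays no role in the argument.
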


Let us now estimate the sizes of the sets $\E^k_1$ and $\E^k_2$. We get an upper bound by
multiple counting: Let $E^k_1(z)$ (resp. $E^k_2(z)$) be the generating functions of $\E^k_1$ (resp.
$\E^k_2$). Then 
\begin{equation} \label{Eone}
E^k_1(z) \prec \sum_{j=1}^5 z \cdot (2nz)^{\sqrt{n}} \cdot \left(\left(2n-\frac{\sqrt{n}}{2}\right)z\right)^{k-\sqrt{n}} \cdot \frac{(k+5)^j\hat{B}^j(z)}{j!}.
\end{equation} 
Some multiple counting is done for the leftmost $\lfloor\sqrt{n}\rfloor$ leaves at depth~1, 
because we did not restrict to labellings such that $|\mathcal L|\geq \lceil\nicefrac{\sqrt{n}}2\rceil$. 
Further over-counting appears by forbidding only
$\lceil\nicefrac{\sqrt{n}}2\rceil$ labels instead of $|\mathcal L|$ for the rightmost $k-\lfloor\sqrt n\rfloor$ leaves. 
Using the function on the right-hand side of
\eqref{Eone} for the computation of the limiting ratio, we get $0.24457$ 
as an asymptotic upper bound for the limiting ratio of the family $\bigcup_{k\in
M}\E^k_1$. 

Furthermore, 
\begin{equation} \label{Etwo}
E^k_2(z) \prec \sum_{j=1}^5 \binom{n}{\lfloor\nicefrac{\sqrt{n}}2\rfloor} 2^{\sqrt{n}/2} \cdot z \cdot \left(\frac{\sqrt{n}}{2}z\right)^{\sqrt{n}} \cdot (2nz)^{k-\sqrt{n}} \cdot
 \frac{(k+5)^j\hat{B}^j(z)}{j!}.
\end{equation} 
Again we do some multiple counting in the same fashion as before. 
The function on the right-hand side of \eqref{Etwo} permits us to conclude that the limiting ratio
of the family $\bigcup_{k\in M} \E^k_2$ is of order $o(1)$, as $\nti$.

\begin{prop}\label{prop:alpha}
The limiting ratio of the set of simple tautologies is greater than $\alpha+o(1)$ where 
$\alpha:=0.12161.$
\end{prop}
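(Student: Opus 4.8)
The plan is to combine the three estimates already set up just before the statement into a single inequality. We have the superset relation $\E\cap\overline{\mathcal{S}_n}\subseteq\bigcup_{k\in M}(\E^k_1\cup\E^k_2)$ from the previous lemma, hence the limiting ratio of the simple tautologies inside $\E$ satisfies
\[
\mu_n(\E\cap\mathcal{S}_n)\;=\;\mu_n(\E)-\mu_n(\E\cap\overline{\mathcal{S}_n})\;\geq\;\mu_n(\E)-\mu_n\Big(\bigcup_{k\in M}\E^k_1\Big)-\mu_n\Big(\bigcup_{k\in M}\E^k_2\Big).
\]
First I would record the three numerical inputs obtained above: $\mu_n(\E)\geq 0.36618+o(1)$, the asymptotic upper bound $\mu_n(\bigcup_{k\in M}\E^k_1)\leq 0.24457+o(1)$ coming from \eqref{Eone}, and $\mu_n(\bigcup_{k\in M}\E^k_2)=o(1)$ coming from \eqref{Etwo}. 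Plugging these in gives $\mu_n(\E\cap\mathcal{S}_n)\geq 0.36618-0.24457+o(1)=0.12161+o(1)$. Since $\E\cap\mathcal{S}_n\subseteq\mathcal{S}_n$, monotonicity of limiting ratios yields $\mu_n(\mathcal{S}_n)\geq 0.12161+o(1)$, which is exactly the claim with $\alpha=0.12161$.

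A couple of points need a careful word. One must make sure that each of the three families in question indeed has a limiting ratio, i.e.\ that the relevant generating functions have the same square-root singularity $\rho$ as $A(z)$ so that Lemma~\ref{lem:limiting_ratio} applies; this holds because each bounding series is a polynomial in $z$, $\hat B(z)$ and $(2nz)$ (or $((2n-\tfrac{\sqrt n}2)z)$ etc.), and $\hat B(z)$ has its unique dominant singularity at $\rho$ of square-root type by Proposition~\ref{prop:sing}, while the extra factors are entire; the finite unions over $k\in M$ are harmless. One also has to note that the inequality $E(z)\succ\cdots$ goes in the direction that makes $\mu_n(\E)$ at least the limiting ratio of the displayed lower-bound series, whereas $E^k_1(z)\prec\cdots$ and $E^k_2(z)\prec\cdots$ go in the direction that makes the respective limiting ratios at most those of the displayed upper-bound series — so the subtraction is legitimate and the net bound is a genuine lower bound for $\mu_n(\mathcal{S}_n)$.

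The only real obstacle is purely computational: verifying that the three constants $0.36618$, $0.24457$ and $o(1)$ are what the stated generating-function expressions actually produce when one forms $\lim_{z\to\rho}(\,\cdot\,)'(z)/A'(z)$ and substitutes the asymptotics $\rho\sim\tfrac1{2n}$, $2n\rho\to 1$ (so $(2n\rho)^{\lfloor\sqrt n\rfloor}\to e^{-\sqrt2}$, etc.), $\hat B(\rho)\sim\tfrac1{\sqrt{2n}}$, and $\lim_{z\to\rho}\hat B'(z)/A'(z)=\tfrac12$ from Proposition~\ref{prop:sing}. These are exactly the "routine calculations" the authors say were delegated to computer algebra, so in the write-up I would simply assemble the inequality above, invoke the three already-announced numerical bounds, and conclude; the subtraction $0.36618-0.24457$ giving $0.12161$ is the whole content once the ingredients are in place.
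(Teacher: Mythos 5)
Your proof is correct and follows exactly the paper's own argument: both use the inclusion $\E\cap\mathcal{S}_n\supseteq\E\setminus\bigcup_{k\in M}(\E^k_1\cup\E^k_2)$ together with the three numerical bounds $\mu_n(\E)\geq 0.36618+o(1)$, $\mu_n(\bigcup_k\E^k_1)\leq 0.24457+o(1)$ and $\mu_n(\bigcup_k\E^k_2)=o(1)$, then subtract and invoke monotonicity of the limiting ratio. Your added remarks on the applicability of Lemma~\ref{lem:limiting_ratio} and on the directions of the coefficientwise inequalities are sound and consistent with the paper.
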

\begin{proof}
The previous lemma is equivalent to $\E \cap \mathcal{S} \supseteq \E\setminus \bigcup_{k\in M}
(\E^k_1 \cup \E^k_2)$. Thus, for $\nti$ we have 
\[
\mu_n(\mathcal{S}_n) \ge \mu_n(\mathcal{S}_n\cap \E) \geq \mu_n(\E) -
\left(\sum_{k=\sqrt{n}}^{15\sqrt{n}}\mu_n(\E^k_1) +  \mu_n(\E^k_2)\right) \geq 0.36618 - 0.24457 +
o(1).\qedhere
\]
\end{proof}

\subsection{A non-negligible family of non-tautologies.}
In this section, we derive an upper bound for the probability of the set of tautologies. 

\begin{df}\label{family_G}
Let $\check{\mathcal{G}}_k$ be the family of $\lor$-rooted trees with exactly $k$ leaves on the
first level, labelled by $k$ different literals $\alpha_1,\ldots,\alpha_k$ such that each variable can only appear positive or negative and whose non-leaf subtrees are all contradictions.
\end{df}

Note that an $\lor$-rooted tree in $\check{\mathcal{G}}$ computes the function
$\alpha_1\lor\ldots\lor\alpha_k$, and is therefore neither a tautology nor a contradiction.

\begin{lem}
\label{lem:beta}
There exists a constant $\beta$ such that, for all $n\geq 0$, 
\[\mathbb{P}_n(\True)\leq \beta < \frac12.\] 
\end{lem}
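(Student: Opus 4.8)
The plan is to establish the upper bound $\mathbb P_n(\True)\le\beta<\frac12$ by exhibiting a family of trees of limiting ratio bounded away from zero which contains no tautologies, and then use the obvious symmetry $\mathbb P_n(\True)=\mathbb P_n(\False)$ together with the fact that the total probability mass is $1$. The family I would use is $\check{\mathcal G}=\bigcup_k\check{\mathcal G}_k$ from Definition~\ref{family_G}: an $\lor$-rooted tree with $k$ first-level leaves carrying pairwise different literals $\alpha_1,\dots,\alpha_k$ (no variable appearing both positive and negative) whose every non-leaf subtree is a contradiction. As observed after the definition, such a tree computes $\alpha_1\lor\cdots\lor\alpha_k$, which is neither $\True$ nor $\False$; and the $\land$-rooted mirror family $\hat{\mathcal G}$ similarly contains no contradictions. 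Hence $\mu_n(\check{\mathcal G}\cup\hat{\mathcal G})$ is a lower bound for $1-\mathbb P_n(\True)-\mathbb P_n(\False)=1-2\mathbb P_n(\True)$, so it suffices to show this ratio is bounded below by a positive constant, uniformly in $n$, which then gives $\beta=\frac12-\frac12\liminf_n\mu_n(\check{\mathcal G}\cup\hat{\mathcal G})$ after checking small $n$ separately (or just bounding $\mathbb P_n(\True)$ crudely there).

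First I would write down the generating function of $\check{\mathcal G}_k$. A contradiction subtree hanging at an $\lor$-root is an $\land$-rooted contradictory tree; but rather than control ``contradiction'' exactly, I would follow the strategy already used for the lower bound and work with an inequality $\prec$: restrict the non-leaf subtrees to belong to some conveniently countable subfamily of contradictions whose generating function is easy to write (for instance $\land$-rooted trees all of whose leaves are labelled $x_1$ and $\bar x_1$ with both present, or even a single such subtree, mimicking the family $\mathcal R$ of Lemma~\ref{lem:alpha}), thereby \emph{under}-counting $\check{\mathcal G}_k$. Concretely one gets a factor $2z$ for the root is wrong here since the root is fixed to $\lor$, so a factor $z$; a factor $\binom{n}{k}2^k k!$ for the choice and order of $\alpha_1,\dots,\alpha_k$; the first-level leaf structure is a shuffle of the $k$ distinct literals with a sequence of non-leaf subtrees interleaved. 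As in the proof of Lemma~\ref{lem:alpha}, I would fix the relative positions (e.g. all $k$ leaves leftmost, then the non-leaf subtrees) to avoid combinatorial bookkeeping at the cost of only a constant factor loss, yielding something of the shape
\[
\check G_k(z)\succ \binom{n}{k}2^k k!\, z\,(z)^{k}\cdot\frac{C(z)}{1-2nz}
\]
where $C(z)$ is the (easy, subdominant-to-$\hat A$) generating function of the chosen sub-family of contradictions and the $\frac1{1-2nz}$ absorbs a tail of arbitrary leaves. The key point is that $C(z)$ is analytic at $\rho$ with $C(\rho)>0$, so $\check G_k(z)$ shares the dominant singularity $\rho$ of $A(z)$ and Lemma~\ref{lem:limiting_ratio} applies.

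Then I would compute $\lim_{z\to\rho}\check G_k'(z)/A'(z)$ using Proposition~\ref{prop:sing}: $2n\rho=1-\frac1{\sqrt{2n}}\cdot(1+o(1))$ so $(2n\rho)^k\to e^{-k/\sqrt{2n}}$ type factors, $\binom nk 2^k k!\,\rho^k=\prod_{\nu=0}^{k-1}(1-\nu/n)\cdot(2n\rho)^k\cdot(\text{const})$, and summing over $k$ ranging like $c\sqrt n$ turns the sum into a Riemann integral — exactly the phenomenon flagged in the remark after Proposition~\ref{prop:first_level_leaves} (the $\Gamma(2,\tfrac12)$ limit law for first-level leaf counts). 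This produces an explicit positive constant $c_0=\liminf_n\mu_n(\check{\mathcal G})>0$, and by symmetry the same for $\hat{\mathcal G}$; since the two families are disjoint (different root connectives) we get $1-2\mathbb P_n(\True)\ge 2c_0+o(1)$, i.e. $\mathbb P_n(\True)\le\frac12-c_0+o(1)<\frac12$. Taking $\beta$ slightly larger than $\frac12-c_0$ and enlarging it if necessary to dominate the finitely many small $n$ finishes the proof. The main obstacle is purely computational: choosing the auxiliary contradiction family $C(z)$ and the range of $k$ so that the resulting sum-to-integral estimate is clean and yields a usable explicit constant — there is no conceptual difficulty once the under-counting set-up of Lemma~\ref{lem:alpha} is reused, but care is needed that the chosen sub-family of contradictions really is disjoint from (or harmlessly overlapping with) what is counted in $\hat{\mathcal G}$ so that the two ratios genuinely add.
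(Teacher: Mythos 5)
Your proposal is correct and follows essentially the same route as the paper: the paper's proof also uses the family $\check{\mathcal G}_k$ of Definition~\ref{family_G} with $k$ ranging over $\{\lfloor\sqrt n\rfloor,\dots,2\lfloor\sqrt n\rfloor\}$, shows each $\check{\mathcal G}_k$ has limiting ratio $\Theta(1/\sqrt n)$ (using the lower bound on the limiting ratio of tautologies/contradictions from Proposition~\ref{prop:alpha} in place of your hand-built subfamily $C(z)$), and concludes via the symmetry $\mathbb P_n(\True)=\mathbb P_n(\False)$. The only inessential differences are that the paper does not need the $\land$-rooted mirror family and keeps the full contradiction generating function rather than an under-counting subfamily.
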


\begin{proof}
The generating function of the tree family defined in Definition~\ref{family_G} is given by
\[
\check{G}_k(z) = \begin{pmatrix}n\\k\end{pmatrix} 2^k k! z^{k+1} \frac1{(1-T(z))^{k+1}},
\]
and its limiting ratio is given by
\[
\lim_{z\to\rho}\frac{\check{G}'(z)}{A'(z)} \sim \begin{pmatrix}n\\k\end{pmatrix} 2^k k! \rho^{k+1}
\frac{k+1}{(1-T(\rho))^{k+1}} \lim_{z\to\rho}\frac{T'(z)}{A'(z)}.\]
But Proposition~\ref{prop:alpha} tells us that $\lim_{z\to\rho}\frac{T'(z)}{A'(z)} \geq \frac{\alpha}{2}$. Moreover, we know that $T(\rho)> 0$ and therefore
$\frac{1}{(1-T(\rho))^{k+1}}> 1$. Therefore,
\begin{align*}
\lim_{z\to\rho}\frac{G'(z)}{A'(z)} 
&\geq \begin{pmatrix}n\\k\end{pmatrix} 2^k k! (k+1) \left(\frac1{2n}\right)^{k+1} \frac{\alpha}{2}\\
&=\alpha \cdot \frac{n(n-1)\ldots(n-k+1)}{n^k} \frac{k+1}{4n}\\
&\geq \alpha \cdot \left(1-\frac{k-1}{n}\right)^{k-1} \frac{k+1}{4n}.
\end{align*}
If $k=\Theta(\sqrt{n})$, then the right-hand side of the above inequality is of order
$\Theta(\nicefrac1{\sqrt{n}})$, and the limiting ratio of $\check{\mathcal{G}}_k$ is thus larger than
$\nicefrac{c_k}{\sqrt{n}}$ where $c_k$ is a positive constant. Thus the limiting ratio of the family
$\bigcup_{k=\lfloor\sqrt{n}\rfloor}^{2\lfloor\sqrt{n}\rfloor} \check{\mathcal{G}}_k$ is bounded
from below by a positive constant $c$. We therefore have proved Lemma~\ref{lem:beta}, and $\beta =
\nicefrac12 - c$.
\end{proof}

\subsection{Almost every tautology is simple.}
In general, it is not easy to describe precisely the shape of a tautologic tree, but according
to our distribution on trees, almost every tautology is ``simple''.

\begin{thm}
\label{thm:st}
Asymptotically almost every tautology is a simple tautology, \emph{i.e.}\
\[
\mathbb{P}_n(\True) = \mu_n(\mathcal{S}_n) + o(1), \text{ as }\nti.
\]
\end{thm}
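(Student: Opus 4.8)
The plan is to show that the tautologies which are \emph{not} simple contribute only $o(1)$ to $\mathbb P_n(\True)$, so that $\mathbb P_n(\True)=\mu_n(\mathcal S_n)+o(1)$. Since every tautologic tree is $\lor$-rooted, a non-simple tautology has no variable appearing together with its negation among its first-level leaves; its "tautological strength" must therefore come from the interaction of the first-level leaves with the non-leaf subtrees, or from the non-leaf subtrees alone. The strategy is to cover the set of non-simple tautologies by a union of the auxiliary families $\mathcal M_{k,\ell,r}^{\Gamma}$ of Section~\ref{part:useful} together with the exceptional families controlled in Lemmas~\ref{lem:small_k}, \ref{lem:big_l}, and then sum the bounds.

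First I would set up the case distinction according to the parameters $k$ (number of distinct labels on the first level), $\ell$ (number of non-leaf subtrees of the root), and $r$. By Lemma~\ref{lem:small_k}, trees with $k<n^{1/4}$ have limiting ratio $\mathcal O(n^{-1/2})=o(1)$, so we may assume $k=\Omega(n^{1/4})$; by Lemma~\ref{lem:big_l}, trees with $\ell>n^{1/8}$ have limiting ratio $\Theta(n^{1/8}2^{-n^{1/8}})=o(1)$, so we may assume $\ell=\mathcal O(n^{1/8})$. It remains to treat a non-simple tautology $t$ with $k=\Omega(n^{1/4})$ and $\ell=\mathcal O(n^{1/8})$. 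The key combinatorial observation is that if $t$ is a tautology with first-level literals $\alpha_1,\dots,\alpha_k$ (none the negation of another), then for $t$ to be forced true, at least one non-leaf subtree $S$ of the root must be a contradiction \emph{when restricted to the assignment making all $\alpha_i$ false}, i.e.\ every leaf of $S$ on its own first level must carry a label in $\{\bar\alpha_1,\dots,\bar\alpha_k\}$ — plus possibly finitely many further "new" literals $\beta_1,\dots,\beta_r$ and their negations that also have to be neutralised, which forces $r\le\ell$ because each new $\beta_j$ must be killed by a separate non-leaf sub-subtree. This is precisely the defining condition of $\mathcal J_{k,r}^{\Gamma}$, and hence $t\in\mathcal M_{k,\ell,r}^{\Gamma}$ for suitable $\Gamma$ (here $\Gamma=\emptyset$, $p=0$). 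So the non-simple tautologies with $k=\Omega(n^{1/4})$, $\ell=\mathcal O(n^{1/8})$ are contained in $\bigcup_{k,\ell,r}\mathcal M_{k,\ell,r}^{\emptyset}$ with $r\le\ell$.

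Then I would apply Lemma~\ref{lem:Mklr}: under exactly these hypotheses ($k=\Omega(n^{1/4})$, $\ell=\mathcal O(n^{1/8})$, $r\le\ell$) it gives $\mu_n(\mathcal M_{k,\ell,r}^{\emptyset})=\mathcal O(n^{-3/2})$. Summing over the at most $2n$ admissible values of $k$, the at most $n^{1/8}$ values of $\ell$, and the at most $\ell+1$ values of $r$ (and, if one wants a fixed $\Gamma=\emptyset$, there is no sum over $\Gamma$), the total is $\mathcal O(n\cdot n^{1/8}\cdot n^{1/8}\cdot n^{-3/2})=\mathcal O(n^{-1/4})=o(1)$. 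Combining this with the two exceptional estimates yields $\mathbb P_n(\True)-\mu_n(\mathcal S_n)=o(1)$, and since $\mathcal S_n\subseteq\{\text{tautologies}\}$ the difference is nonnegative, so in fact $\mathbb P_n(\True)=\mu_n(\mathcal S_n)+o(1)$, as claimed. The main obstacle I anticipate is the combinatorial bookkeeping in the second step: making precise that a non-simple tautology \emph{must} embed a sub-structure of type $\mathcal M_{k,\ell,r}^{\emptyset}$ with $r\le\ell$ — one has to argue that the "new" literals $\beta_j$ needed to neutralise the distinguished non-leaf subtree genuinely cost one non-leaf subtree each (so $r\le\ell-1<\ell$ or $r\le\ell$), and that the over-counting inherent in the inequality~\eqref{eq:M} is harmless because we only need an upper bound. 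Once that embedding is established, the rest is a mechanical summation of the estimates already proved in Section~\ref{part:useful}.
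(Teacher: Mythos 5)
Your overall architecture is exactly the paper's: cover the non-simple tautologies by $\bigcup_{k,\ell}\bigcup_{r=0}^{\ell-1}\mathcal{M}^{\emptyset}_{k,\ell,r}$, dispose of small $k$ via Lemma~\ref{lem:small_k} and large $\ell$ via Lemma~\ref{lem:big_l}, apply Lemma~\ref{lem:Mklr} to the remaining range, and sum to get $\mathcal{O}(n^{-1/4})=o(1)$. The summation and the use of the auxiliary lemmas are all correct and match the paper.

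The genuine gap is in the step you yourself flag as the main obstacle: the embedding of a non-simple tautology into some $\mathcal{M}^{\emptyset}_{k,\ell,r}$ with $r<\ell$. Your justification --- that ``at least one non-leaf subtree $S$ of the root must be a contradiction [presumably: a tautology] when restricted to the assignment making all $\alpha_i$ false'' --- is false: a disjunction of non-tautologies can be a tautology (already $x\lor\bar x$ shows the phenomenon), so no single $A_i$ need become constant under that restriction. Likewise, the claim that each new literal $\beta_j$ ``must be killed by a separate non-leaf sub-subtree'' (i.e.\ by grandchildren of the root) is not the mechanism that bounds $r$; the bound has nothing to do with the internal structure of one $A_i$. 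The correct argument is global across all $\ell$ subtrees: suppose every $A_i$ had at least $\ell$ distinct new variables on its first level. One then falsifies the $A_i$ one at a time --- at step $s$, some not-yet-falsified $A_{(s)}$ is still not a contradiction under the current partial assignment, and since only $s-1<\ell$ new variables have been assigned so far while $A_{(s)}$ has at least $\ell$ new first-level variables, a fresh one remains; setting the corresponding first-level literal to $\False$ kills the $\land$-rooted $A_{(s)}$. After $\ell$ steps all $A_i$ compute $\False$, so $t$ reduces to $\alpha_1\lor\dots\lor\alpha_k$, which is not a tautology because no $\alpha_i$ clashes with another --- contradiction. Hence some $A_i$ has at most $\ell-1$ new first-level variables, which is what places $t$ in $\mathcal{M}^{\emptyset}_{k,\ell,r}$ with $r\le\ell-1$. (A minor additional point: your assertion that every tautology is $\lor$-rooted is not literally true --- an $\land$ of $\lor$-rooted tautologies is an $\land$-rooted tautology --- but such trees have no leaf on the first level and are negligible by Proposition~\ref{prop:noleaf}.)
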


\begin{proof}
To prove Theorem~\ref{thm:st}, we will consider the family of non-simple tautologies, study the
structure of its elements and show that its limiting ratio tends to zero when $n$ tends to
infinity.

Let us consider $\mathcal{N}_n=\mathcal{T}_n\setminus \mathcal{S}_n$ the set of tautologies which
are not simple. Let $t\in\mathcal{N}_n$, with $\ell$ non-leaf subtrees $A_1,\ldots,A_\ell$ and
$k$ different labels $\alpha_1,\ldots,\alpha_k$ appearing in the first level. Since $t$ is not a
simple tautology, the set $L = \{\alpha_1,\ldots,\alpha_k\}$ cannot contain both a variable
and its negation. For each $i\in\{1,\ldots,\ell\}$, the tree $A_i$ has some of its leaves in its
first level labelled by labels in $L$ and others carrying new labels.
What will be called a ``new variable'' in the following is the label of a leaf in the \emph{first
level} of one of the $A_i$ if it does not belong to $L$. 

Let us show that there exists at least one $i\in\{1,\ldots,\ell\}$ such that $A_i$ has at most
$\ell-1$ new variables. 

Let us assume that for all $i\in\{1,\ldots,\ell\}$, $A_i$ has at least $\ell$ new variables in the
first level. We assign one of those new variables, say $\nu_1$ and belonging to $A_1$, to $\False$.
Then $A_1$ computes $\False$. But, there must exist $A_{(2)}\in\{A_2,\ldots,A_{\ell}\}$ which is
not a contradiction for this assignment, because if $A_1 = A_2 = \ldots = A_\ell \equiv \False$ for
$\nu_1 = \False$, then the whole tree $t$ would compute $\alpha_1\lor\ldots\lor\alpha_k$ and would
thus not be a tautology.

Let us iterate this algorithm: After step $s-1\leq\ell$, we have assigned $\nu_1 = \nu_2 = \ldots =
\nu_{s-1} = \False$ and at least $s-1$ trees of $\{A_1,\ldots A_\ell\}$ compute $\False$. At step 
$s$ note that at least one of the remaining subtrees must still not be a contradiction: Let us
call this tree $A_{(s)}$. It has at least $\ell$ new variables and we have assigned $s-1 \leq
\ell$ variables to $\False$ so far. Therefore we still have free new variables among the new
variables of $A_{(s)}$, and we can assign one, denoted by $\nu_s$, to $\False$. 

After the $\ell^{\text{th}}$ step, we have found an assignment of variables different from those
in $L$ such that all trees of $\{A_1,\ldots,A_\ell\}$ compute $\False$. Thus $t$ computes $\alpha_1\lor\ldots\lor\alpha_k$, which is impossible since $t$ is a tautology.

Therefore, there exist at least one $i\in\{1,\ldots,\ell\}$ such that $A_i$ has at most $\ell-1$ new
variables in the first level. 

It means that $\mathcal{N}_n \subseteq \bigcup_{k=0}^n \bigcup_{\ell=0}^{\infty}
\bigcup_{r=0}^{\ell-1} \mathcal{M}^{\emptyset}_{k,\ell,r}$ (\emph{cf.}\ Section~\ref{part:useful}).
Let us decompose this union into three distinct unions:
\begin{align*} 
&\bigcup_{k=0}^n \bigcup_{\ell=0}^{\infty} \bigcup_{r=0}^{\ell-1}\mathcal{M}^{\emptyset}_{k,\ell,r}
\\
&\qquad =\left(\bigcup_{k=0}^{\lfloor n^{\nicefrac14}\rfloor} \bigcup_{\ell=0}^{\infty} \bigcup_{r=0}^{\ell-1}
\mathcal{M}^{\emptyset}_{k,\ell,r}\right)
\cup \left(\bigcup_{k=\lfloor n^{\nicefrac14}\rfloor}^{n} \bigcup_{\ell=\lfloor
n^{\nicefrac18}\rfloor}^{\infty}
\bigcup_{r=0}^{\ell-1} \mathcal{M}^{\emptyset}_{k,\ell,r}\right)
\cup \left(\bigcup_{k=\lfloor n^{\nicefrac14}\rfloor}^{n}
\bigcup_{\ell=0}^{\lfloor n^{\nicefrac18}\rfloor}
\bigcup_{r=0}^{\ell-1}\mathcal{M}^{\emptyset}_{k,\ell,r}\right).
\end{align*} 
Thanks to Lemma~\ref{lem:small_k}, the first term has a limiting ratio tending to zero as $n$
tends to infinity; Lemma~\ref{lem:big_l} guarantees that the second term has also a limiting ratio
tending to zero, and by Lemma~\ref{lem:Mklr}, the third term satisfies
\[
\mu_n\left(\left(\bigcup_{k=\lfloor n^{\nicefrac14}\rfloor}^{n} \bigcup_{\ell=0}^{\lfloor
n^{\nicefrac18}\rfloor}
\bigcup_{r=0}^{\ell-1}\mathcal{M}^{\emptyset}_{k,\ell,r}\right)\right)
= \mathcal{O}\left((n-n^{\nicefrac14})(n^{\nicefrac18})^2\frac1{n^{\nicefrac32}}\right) =
\mathcal{O}\left(\frac1{n^{\nicefrac14}}\right)
\]
and is thus also tending to zero when $n$ tends to infinity. Thus, $\mu_n(\mathcal{N}_n\setminus
\mathcal{S}_n) = o(1)$ and Theorem~\ref{thm:st} is proved.
\end{proof}

\section{Probability of functions larger than a fixed $f_0$}
\label{part:larger}

In this section we investigate the probability of all the functions that are ``larger'' than a fixed
given Boolean function. The result will enable us to prove the second statement of
Theorem~\ref{thm:general} about literal functions, proved in Section~\ref{part:simple_x}.

\begin{df}
\label{df:larger}
Let $f$ and $g$ be two Boolean functions of $n$ variables, we say that $g\geq f$ if and only if, $g(x_1,\ldots,x_n) \geq f(x_1,\ldots,x_n)$ for all $(x_1,\ldots,x_n)\in \{0,1\}^n$.
\end{df}
In the following we fix a non-constant Boolean function $f_0$ and estimate $\mathbb{P}_n(f\geq f_0)$. 
We first consider the case $f_0 = x_1\land \ldots \land x_p$ and then generalize the result to any
Boolean function. 

\begin{prop}
\label{prop:comparison}
Choose $n_0$ and a non-constant function $f_0\in\mathcal{F}_{n_0}$. Moreover, denote the natural
extension of $f_0$ to $\mathcal{F}_n$ for $n>n_0$, \emph{i.e.} the function
$(x_1,\dots,x_{n_0},x_{n_0+1},\dots,x_n)\mapsto f_0(x_1,\dots,x_{n_0})$, by $f_0$ as well. 
Denote by $F$ a random Boolean function having law $\mathbb P_n$, 
\emph{i.e.} $\mathbb P(F= f) = \mathbb P_n(f)$ for all Boolean function $f$.
Then 
\[
\mathbb{P}(F\geq f_0)  = \mathbb{P}_n(\True)\ (1+o(1)),  \text{as $n$ tends to infinity.}
\]
\end{prop}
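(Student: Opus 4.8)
The plan is to prove matching bounds. The lower bound is immediate: since $\True\geq f_0$ pointwise we have $\{F=\True\}\subseteq\{F\geq f_0\}$, hence $\mathbb{P}(F\geq f_0)\geq\mathbb{P}_n(\True)$. As Lemmas~\ref{lem:alpha} and~\ref{lem:beta} give $\mathbb{P}_n(\True)=\Theta(1)$, the multiplicative form $(1+o(1))$ is the same as the additive form, so everything reduces to the upper bound $\mathbb{P}(F\geq f_0,\,F\neq\True)=\sum_{g\geq f_0,\,g\neq\True}\mathbb{P}_n(g)=o(1)$ as $\nti$; equivalently, the limiting ratio of the family of trees computing some $g\geq f_0$ with $g\neq\True$ tends to $0$.

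First I would reduce to a conjunction. Since $f_0$ is non-constant it has a satisfying assignment $a$; let $c$ be the corresponding full minterm, the conjunction of the $n_0$ literals fixing the variables of $f_0$ to $a$, so that $c\leq f_0$ and $\{g\geq f_0\}\subseteq\{g\geq c\}$, whence $\mathbb{P}(F\geq f_0)\leq\mathbb{P}(F\geq c)$. The model is invariant under flipping any subset of variables ($x_i\leftrightarrow\bar x_i$), a symmetry that preserves $\mathbb{P}_n$ and fixes $\True$; applying it turns $c$ into $x_1\land\cdots\land x_p$ with $p=n_0$. So it suffices to treat $f_0=x_1\land\cdots\land x_p$, after which the sandwich $\mathbb{P}_n(\True)\leq\mathbb{P}(F\geq f_0)\leq\mathbb{P}(F\geq c)=\mathbb{P}_n(\True)(1+o(1))$ gives the general statement.

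For $f_0=x_1\land\cdots\land x_p$, a tree $t$ computes some $g\geq f_0$ precisely when $t$ becomes a tautology after substituting $x_1=\cdots=x_p=\True$. I would split the trees computing some $g\geq f_0$ with $g\neq\True$ into families of vanishing limiting ratio. Trees with \emph{no} leaf on the first level lie in $\mathcal{A}^{(0)}$, hence contribute $O(n^{-3/2})$ by Proposition~\ref{prop:noleaf}. If the root is $\land$, then $g\geq f_0$ forces every first-level leaf to be one of $x_1,\dots,x_p$ (a literal $\lambda$ satisfies $\lambda\geq x_1\cdots x_p$ only if $\lambda=x_j$ with $j\le p$), so such a tree has a first-level leaf in $\Gamma:=\{x_1,\dots,x_p\}$ and its ratio is $O(1/\sqrt n)$ by Proposition~\ref{prop:A_alpha}; the same bound covers $\lor$-rooted trees carrying some $x_j$, $j\le p$, on the first level. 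There remains the core family: $\lor$-rooted trees with at least one first-level leaf, none of them a positive $x_j$ with $j\le p$, and computing $g\neq\True$.

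For the core family I would run the greedy argument of Theorem~\ref{thm:st}, now relative to $\Gamma$. Writing $\alpha_1,\dots,\alpha_k$ for the distinct first-level literals (no variable together with its negation, as $t$ is not a simple tautology, and none equal to an $x_j$) and $A_1,\dots,A_\ell$ for the $\land$-rooted non-leaf subtrees, call a variable \emph{new} if it lies outside $\{x_1,\dots,x_p\}$ and outside the variables of the $\alpha_i$ and occurs on the first level of some $A_i$; I claim some $A_i$ has at most $\ell-1$ new variables. Otherwise, setting new variables to $\False$ one at a time kills the $A_i$ greedily (each being $\land$-rooted), and since $f_0$ does not depend on new variables the relation $g\geq f_0$ survives the restriction and forces $\bigvee_i\alpha_i\geq x_1\cdots x_p$; but on the subcube $x_1=\cdots=x_p=\True$ every $V_0$-literal among the $\alpha_i$ is negative, hence $\False$, so this would make a disjunction of distinct non-$V_0$ literals (none with both signs) identically $\True$, which is impossible. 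Thus the core family sits inside $\bigcup_{r\le\ell-1}\mathcal{M}_{k,\ell,r}^{\Gamma}$ with $\Gamma=\{x_1,\dots,x_p\}$ and $|\Gamma|=p=O(1)$, and I would finish exactly as in Theorem~\ref{thm:st}: Lemma~\ref{lem:small_k} disposes of $k<n^{\nicefrac14}$, Lemma~\ref{lem:big_l} of $\ell>n^{\nicefrac18}$, and Lemma~\ref{lem:Mklr} bounds each remaining $\mathcal{M}_{k,\ell,r}^{\Gamma}$ by $O(n^{-\nicefrac32})$, so the sum over the $O(n\cdot n^{\nicefrac18}\cdot n^{\nicefrac18})$ relevant triples is $O(n^{-\nicefrac14})=o(1)$. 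The main obstacle is this structural claim: choosing ``new variable'' so that the greedy killing both succeeds and produces a contradiction with $g\geq f_0$ (rather than with $g=\True$), and picking $\Gamma=\{x_1,\dots,x_p\}$ — only the positive protected literals, so $\Gamma$ contains no variable with its negation yet both signs of the $f_0$-variables are excluded from being new — which is exactly what lets the Theorem~\ref{thm:st} apparatus be reused verbatim.
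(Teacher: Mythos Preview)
Your proof is correct and follows essentially the same route as the paper: reduce to a conjunction of literals via the sandwich $\mathbb{P}_n(\True)\leq\mathbb{P}(F\geq f_0)\leq\mathbb{P}(F\geq c)$, dispose of $\mathcal{A}^{(0)}$ and of trees with a first-level leaf in $\Gamma=\{x_1,\dots,x_p\}$ using Propositions~\ref{prop:noleaf} and~\ref{prop:A_alpha}, and on the remaining $\lor$-rooted core run the greedy new-variable argument to land in $\bigcup_{r\le\ell-1}\mathcal{M}^{\Gamma}_{k,\ell,r}$, then finish with Lemmas~\ref{lem:small_k}, \ref{lem:big_l}, \ref{lem:Mklr} exactly as in Theorem~\ref{thm:st}. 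The only cosmetic differences are that the paper phrases the reduction as $f_0=(\gamma_1\land\cdots\land\gamma_p)\lor g_0$ rather than via a minterm, and it bounds the $\land$-rooted family by a direct computation giving $O(n^{-3/2})$ instead of your coarser (but sufficient) $O(n^{-1/2})$ via Proposition~\ref{prop:A_alpha}.
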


The remaining part of this section is devoted to the proof of Proposition~\ref{prop:comparison}. 
We prove this proposition in two steps: First, assume that $f_0$ is a conjunction of literals, and
then we extend the proof to obtain the general result. 

\subsection{Case I: $\bs{f_0}$ is a conjunction of literals}

Let  $f_0 = \gamma_1\land \ldots \land \gamma_p$, where the $\gamma_i$'s are literals.
First, let us remark that $\mathbb{P}(F\geq f_0) \geq \mathbb{P}_n(\True) \geq \alpha >0$.
Let us consider an associative tree $t$ computing a Boolean function which is larger than $f_0$
and not a tautology.

The family of trees with no leaf on the first level has a limiting ratio which is asymptotically
equal to $\nicefrac{1}{n\sqrt{2n}}$ when $n$ tends to infinity (\emph{cf.}\ 
Proposition~\ref{prop:noleaf}). It is thus negligible compared with $\mathbb{P}(\True)$. 
Thus, we can assume that $t$ has at least one leaf on the first level.

Consider first the case where $t$ is rooted by an $\lor$:
\begin{itemize}
\item Let us first assume that there exists one leaf on the first level of $t$, labelled by one of
the $\gamma_i$. The family of trees with at least one leaf on the first level and with a label from
the set $\Gamma = \{\gamma_1,\ldots,\gamma_p\}$ has a limiting ratio equivalent to $p\sqrt{\nicefrac{2}{n}}$,
in view of Proposition~\ref{prop:A_alpha}. 
The limiting ratio of such trees is thus negligible compared with the limiting ratio of the set of
tautologies. We can thus neglect this family.

\item Let us assume that $t$ has no leaf on the first level labelled by a literal chosen in
$\Gamma = \{\gamma_1,\ldots,\gamma_p\}$. Let us denote by $k$ the number of different labels, denoted by
${\alpha_1,\ldots,\alpha_k}$, appearing on the first level of $t$ and by $\ell$ the number of its
non-leaf subtrees, denoted by $A_1,\ldots,A_{\ell}$. Observe that since $t$ is not a tautology,
the labels appearing on the first level of $t$ cannot contain a variable and its negation. The
subtrees $A_1,\ldots, A_{\ell}$ have themselves leaves on their first level (\emph{i.e.}\ on the second
level of $t$), and those leaves are labelled either by ``old variables'', \emph{i.e.}\ by literal chosen
from $\Ord = \{\alpha_1,\ldots,\alpha_k,\gamma_1,\ldots,\gamma_p\}$ and their negations, or by
``new variables'', i.e by other literals. Assume that for all $i\in\{1,\ldots,\ell\}$, $A_i$ has at
least $\ell$ different new variables appearing on its first level. Thus, since each $A_i$ is
rooted by an $\land$, we can find an assignment of the variables $\{x_1,\ldots,x_n\}\setminus
\Ord$ such that all $A_i$ compute $\False$ for this assignment. Assign then all
the leaves on the first level of $t$ to $\False$ and $\gamma_1,\dots,\gamma_p$ to $\True$. Then $t$ computes
$\False$ while $\gamma_1\land \ldots\land \gamma_p$ takes the value $\True$ for this assignment. 
But this is impossible!

Thus, there exists at least one $A_i$ which has fewer than $\ell$ new variables on its first level.
This means that $t$ belongs to the set $\bigcup_{r=0}^{\ell-1}\mathcal{M}^{\Gamma}_{k,\ell,r}$.

The limiting ratio of such trees is thus less than the limiting ratio of $\bigcup_{k,\ell\geq
0}\bigcup_{r=0}^{\ell-1}\mathcal{M}^{\Gamma}_{k,\ell,r}$, and thanks to the results proved
Section~\ref{part:useful}, we have 
\[
\mu_n\left(\bigcup_{k,\ell\geq 0}\bigcup_{r=0}^{\ell-1}\mathcal{M}^{\Gamma}_{k,\ell,r}\right) 
= \mathcal{O}\left((n-n^{\nicefrac14})(n^{\nicefrac18})^2\frac1{n^{\nicefrac32}}\right) =
\mathcal{O}\left(\frac1{n^{\nicefrac14}}\right)
\]
and this family is also negligible in in front of tautologies. 
\end{itemize}

If $t$ is rooted by an $\land$, then its first level leaves have labels chosen from the set $\Gamma = \{\gamma_1,\ldots,\gamma_p\}$. The family of trees with first level leaves labelled in $\{\gamma_1,\ldots,\gamma_p\}$ has generating function
\[H_{\Gamma}(z) = 2nz + 2z\frac{(\hat{B}(z)+pz)^2}{1-\hat{B}(z)-pz}\]
and its limiting ratio is asymptotically equal to $\nicefrac1{n\sqrt{2n}}$. It is thus negligible
compared with the set of tautologies. We have thus proved that
\[\mathbb{P}(F\geq \gamma_1\land \ldots\land \gamma_p) \sim \mathbb{P}_n(\True),\]
as $n$ tends to infinity.

\subsection{Case II: $\bs{f_0}$ is any non-constant Boolean function} 

All Boolean functions $f_0$ can be written as $f_0 = (\gamma_1 \land \ldots \land\gamma_p)\lor
g_0$ for some integer $p\geq 1$, some literals $\{\gamma_1,\ldots,\gamma_p\}$, 
and some Boolean function $g_0$. Thus,
\[\mathbb{P}_n(\True)\leq \mathbb{P}(F\geq f_0) \leq \mathbb{P}(F\geq \gamma_1 \land \ldots
\land \gamma_p) \sim \mathbb{P}_n(\True)\]
and Proposition~\ref{prop:comparison} is proved. \hfill \qedsymbol

\section{Literals}
\label{part:simple_x}
The aim of this section is to estimate the probability of a literal Boolean function \emph{i.e. }a function of the shape
$((x_1,\ldots,x_n)$ $\mapsto x)$ where $x$ is a literal among
$\{x_1,\bar{x}_1,\ldots,x_n,\bar{x}_n\}$, therefore proving the second statement of Theorem~\ref{thm:general}. 
As for tautologies, we will prove that a typical tree computing this function has a very simple shape.

\begin{df}
A tree $t$ is a {\bf simple $x$ tree} if it is rooted by an $\lor$ (resp. $\land$), with one
single leaf on the first level, labelled by $x$ and with one non-leaf subtree which is a tautology
(resp. a contradiction). We denote by $\mathcal{X}$ the family of such trees and and by $X_m$ the
number of simple $x$ trees of size $m$.
\end{df}

\begin{lem}
\label{lem:SX}
For $n$ tending to infinity, the limiting ratio of the set simple $x$ trees satisfies
\[\mu_n(\mathcal{X}) \sim \frac{\mathbb{P}_n(\True)}{n^2}.\]
\end{lem}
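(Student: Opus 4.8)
The plan is to compute the generating function of $\mathcal X$ directly and then apply Lemma~\ref{lem:limiting_ratio}, exactly in the style of the proofs of Proposition~\ref{prop:noleaf} and Proposition~\ref{prop:A_alpha}. A simple $x$ tree is, by definition, an $\lor$-rooted tree with exactly one leaf on the first level (carrying the label $x$) and exactly one non-leaf subtree, which must be a tautology, together with the symmetric $\land$-rooted version whose non-leaf subtree is a contradiction. If $T(z)$ denotes the generating function of $\lor$-rooted tautologic trees (equivalently, of $\land$-rooted contradictions, by symmetry), then the symbolic method gives
\[
X(z) = 2\,z\cdot z \cdot T(z) = 2z^2 T(z),
\]
where one factor $z$ is the root, the other factor $z$ is the single first-level leaf labelled $x$, and the factor $2$ accounts for the two possible root connectives; note there is no sequence factor here since the number of non-leaf subtrees and the number of leaves are both fixed to one, so no multiple-counting issue arises.

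Next I would invoke Lemma~\ref{lem:limiting_ratio}: since $T(z)$ (being a sub-series of $\hat A_{\True}(z)+\check A_{\True}(z)$) has the same square-root singularity $\rho$ as $A(z)$, so does $X(z)$, and therefore
\[
\mu_n(\mathcal X) = \lim_{z\to\rho}\frac{X'(z)}{A'(z)}
= \lim_{z\to\rho}\frac{4z\,T(z) + 2z^2 T'(z)}{A'(z)}
= 2\rho^2 \lim_{z\to\rho}\frac{T'(z)}{A'(z)},
\]
because the term $4zT(z)/A'(z)$ vanishes in the limit (as $A'(z)\to\infty$, the standard observation already used in the excerpt). Now $\lim_{z\to\rho} T'(z)/A'(z)$ is precisely the limiting ratio of $\lor$-rooted tautologies among all associative trees; since every tautology is either $\lor$-rooted or $\land$-rooted and the two types are in bijection by swapping connectives, this limit equals $\tfrac12\mu_n(\mathcal T_n) = \tfrac12\mathbb P_n(\True)$. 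Plugging in $\rho = \tfrac1{2n}+\mathcal O(n^{-3/2})$ from Proposition~\ref{prop:sing} gives $2\rho^2 \sim \tfrac1{2n^2}$, hence
\[
\mu_n(\mathcal X) \sim 2\rho^2\cdot\frac{\mathbb P_n(\True)}{2} = \rho^2\,\mathbb P_n(\True) \sim \frac{\mathbb P_n(\True)}{4n^2}.
\]

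Here I see a discrepancy with the stated constant: the computation above yields $\mathbb P_n(\True)/(4n^2)$ rather than $\mathbb P_n(\True)/n^2$, so either the statement absorbs the constant $4$ into the asymptotic (if ``$\sim$'' is read loosely, or if the normalisation of $\mathbb P_n(\True)$ here refers only to the $\lor$-rooted half, making $\lim T'/A' = \mathbb P_n(\True)$ rather than $\tfrac12\mathbb P_n(\True)$), or there is a convention I am missing about how the root contributes. The safe route, and the one I would actually write, is to carry the constant honestly: establish $\mu_n(\mathcal X) = c\,\mathbb P_n(\True)/n^2\,(1+o(1))$ for an explicit positive constant $c$, which is all that is needed downstream since $\mathcal X$ will only be used to produce a $\Theta(1/n^2)$ lower bound for $\mathbb P_n(x)$. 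The one genuinely non-routine point is the identification $\lim_{z\to\rho} T'(z)/A'(z) = \Theta(\mathbb P_n(\True))$: this requires knowing that $T(z)$ inherits the square-root singularity at $\rho$ with a non-degenerate Puiseux expansion, which follows from the Drmota--Lalley--Woods analysis of Section~\ref{part:existence} applied to $\check A_{\True}(z)$, together with the fact (also from Section~\ref{part:existence}) that $\mathbb P_n(\True) = \lim_m [z^m](\hat A_{\True}+\check A_{\True})/[z^m]A > 0$; everything else is the bookkeeping of differentiating a product and discarding the term killed by $A'(z)\to\infty$.
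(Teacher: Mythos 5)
Your overall strategy is exactly the paper's (write down the generating function of $\mathcal X$ and apply Lemma~\ref{lem:limiting_ratio}), but the factor-of-$4$ discrepancy you flag at the end is not a convention issue: it comes from two separate factor-of-$2$ errors in your computation, and the constant does matter downstream, since the theorem of Section~\ref{part:simple_x} asserts the exact equivalence $\mathbb{P}_n(x)\sim\mu_n(\mathcal X)$ and independently computes $\mu_n(\mathcal T_x)\sim 4\rho^2\,\mathbb{P}_n(\True)$; these only match if $\mu_n(\mathcal X)\sim 4\rho^2\,\mathbb{P}_n(\True)\sim \mathbb{P}_n(\True)/n^2$.

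First, the generating function is $X(z)=4z^2T(z)$, not $2z^2T(z)$. Associative trees are \emph{plane}, so the root of a simple $x$ tree has two distinguishable children --- the leaf labelled $x$ and the non-leaf subtree --- which can occur in either order. You correctly noted there is no sequence factor, but the factor $4=2\times 2$ accounts for the root label \emph{and} for this ordering of the two children; you only kept the first $2$.

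Second, your identification $\lim_{z\to\rho}T'(z)/A'(z)=\tfrac12\mathbb{P}_n(\True)$ rests on the claim that $\lor$-rooted and $\land$-rooted tautologies are in bijection by swapping connectives. That bijection does not preserve tautologies: by De Morgan duality, swapping $\land$ and $\lor$ sends an $\lor$-rooted tautology to an $\land$-rooted \emph{contradiction}. In fact the two root types are far from equinumerous among tautologies: an $\land$-rooted tautology can have no leaf on its first level (a literal is never a tautology), so by Proposition~\ref{prop:noleaf} the limiting ratio of $\land$-rooted tautologies is $\mathcal O(n^{-3/2})=o(\mathbb{P}_n(\True))$ (equivalently, Theorem~\ref{thm:st} shows almost every tautology is simple, hence $\lor$-rooted). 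Therefore $\lim_{z\to\rho}T'(z)/A'(z)=\mathbb{P}_n(\True)(1+o(1))$, where $T$ is the generating function of $\lor$-rooted tautologies, equal by duality to that of $\land$-rooted contradictions, which is what actually appears as the non-leaf subtree in both halves of $\mathcal X$. Correcting both points gives $\mu_n(\mathcal X)=4\rho^2\,\mathbb{P}_n(\True)(1+o(1))\sim\mathbb{P}_n(\True)/n^2$ as claimed; the rest of your argument (square-root singularity of $T$ inherited from the Drmota--Lalley--Woods analysis, discarding the term divided by $A'(z)\to\infty$) is fine.
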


\begin{proof}
The generating function associated with the set of simple $x$ trees 
is given by the following generating function:
\[X(z) = 4z^2T(z),\]
where the $4$ factor contains the choice of the label of the root 
and the order of its first and second child.
Therefore,
\[\mu_n(\mathcal{X}) = \lim_{z\to\rho} \frac{X'(z)}{A'(z)}  = 4\rho^2 \lim_{z\to\rho} \frac{T'(z)}{A'(z)}.\]
Observing that $\frac{T'(z)}{A'(z)}$ tends to $\mathbb{P}_n(\True)$ when $z$ tends to $\rho$ and that
$\rho\sim\nicefrac1{2n}$ when $n$ tends to infinity (\emph{cf.}~Proposition~\ref{prop:sing}) permits to
complete the proof.
\end{proof}

\begin{thm}
For $n$ tending to infinity,
\[\mathbb{P}_n(x) \sim \mu_n(\mathcal{X}).\]
\end{thm}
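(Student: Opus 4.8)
The plan is to show that the family $\mathcal X$ of simple $x$ trees captures, up to a negligible error, all trees computing the literal function $x$. Since Lemma~\ref{lem:SX} already gives $\mu_n(\mathcal X)\sim \mathbb P_n(\True)/n^2$, and since $\mathbb P_n(\True)$ is of constant order by Theorem~\ref{thm:st} and Proposition~\ref{prop:alpha}, it suffices to prove that the limiting ratio of trees computing $x$ that are \emph{not} simple $x$ trees is $o(1/n^2)$, or at least $o(\mu_n(\mathcal X))$. The structure of the argument should parallel the proof of Theorem~\ref{thm:st}: classify the trees computing $x$ by root label, number $k$ of distinct first-level labels, and number $\ell$ of non-leaf subtrees, then bound each class.

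First I would treat the root label. If $t$ computes $x$ and is rooted by $\land$, then every first-level leaf label $\alpha$ must satisfy $x\le\alpha$ as functions, forcing $\alpha=x$; so all first-level leaves carry the label $x$, and the non-leaf subtrees, intersected, must compute a function $\ge x$. Dually for an $\lor$-root: here one first-level leaf can be $x$ itself, and then (to get exactly $x$ and not something larger) the remaining structure must not push the function above $x$. The key reduction is a disjointness/assignment argument: if $t$ is $\lor$-rooted, computes $x$, and has $\ell$ non-leaf subtrees $A_1,\dots,A_\ell$ each with at least $\ell$ ``new'' variables on its first level (variables not among the first-level labels of $t$ nor equal to $x$), then, exactly as in the proof of Theorem~\ref{thm:st}, we can iteratively assign new variables to $\False$ so that all the $A_i$ evaluate to $\False$; setting the first-level leaves appropriately then makes $t$ compute a function that disagrees with $x$, a contradiction. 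Hence at least one $A_i$ has at most $\ell-1$ new variables, which places $t$ in $\bigcup_{r=0}^{\ell-1}\mathcal M_{k,\ell,r}^{\Gamma}$ for a suitable small $\Gamma$ (here $\Gamma=\{x\}$ or $\Gamma=\emptyset$, so $p\le 1$).

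Then I would invoke the machinery of Section~\ref{part:useful} exactly as in Theorem~\ref{thm:st}: split the union over $k$ at $n^{1/4}$ and over $\ell$ at $n^{1/8}$. Lemma~\ref{lem:small_k} handles $k<n^{1/4}$ (ratio $O(1/\sqrt n)$), Lemma~\ref{lem:big_l} handles $\ell>n^{1/8}$ (ratio exponentially small), and Lemma~\ref{lem:Mklr} handles the bulk, giving $O((n-n^{1/4})(n^{1/8})^2 n^{-3/2})=O(n^{-1/4})$. The trees with no first-level leaf are $O(1/(n\sqrt{2n}))$ by Proposition~\ref{prop:noleaf}. Combining, the limiting ratio of non-simple trees computing $x$ is $O(n^{-1/4})$, hence $o(1)$; together with $\mathbb P_n(x)\ge \mu_n(\mathcal X)$ (simple $x$ trees all compute $x$), we conclude $\mathbb P_n(x)\sim\mu_n(\mathcal X)$.

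The main obstacle — and the only place the argument genuinely differs from the tautology case — is the $\lor$-rooted analysis when one first-level leaf \emph{is} labelled $x$: then $t$ automatically satisfies $t\ge x$, so I must show that the generic such tree computes something strictly larger than $x$ (hence contributes to $\mathbb P_n(f\ge x)\setminus\{x\}$ rather than to $\mathbb P_n(x)$), and that the exceptional trees which \emph{do} compute exactly $x$ but are not simple $x$ trees form a negligible family. This is where I would lean on Proposition~\ref{prop:comparison}: $\mathbb P_n(F\ge x)=\mathbb P_n(\True)(1+o(1))$ while the simple-$x$ contribution to $\{F=x\}$ is already of order $\mathbb P_n(\True)/n^2$, so the budget for the remaining ``exactly $x$'' trees is tight and must be controlled by the same $\mathcal M_{k,\ell,r}^{\{x\}}$ estimates, checking that the extra leaf labelled $x$ only improves (shrinks) the bounds. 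One should also verify the easy lower bound direction and that the hypotheses $k+p+r<n$, $r=O(\sqrt n)$, $r\le\ell$ of Lemmas~\ref{Klem} and~\ref{lem:Mklr} are met with $p\le 1$, which is routine.
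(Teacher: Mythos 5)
Your overall strategy --- reduce to showing that the trees computing $x$ which are not simple $x$ trees have negligible limiting ratio --- is reasonable, and you correctly state the target at the outset: the error must be $o(1/n^2)$, since $\mu_n(\mathcal X)=\Theta\bigl(\mathbb P_n(\True)/n^2\bigr)$. But the argument you then run does not meet that target. Transplanting the tautology decomposition (Lemmas~\ref{lem:small_k}, \ref{lem:big_l} and~\ref{lem:Mklr}) yields, as you yourself compute, a bound of order $O(n^{-1/4})$ on the exceptional family; your conclusion ``hence $o(1)$'' is far short of the required $o(n^{-2})$. Even the refinement sketched in your last paragraph --- forcing one extra first-level leaf labelled $x$ --- only buys a factor $O(1/\sqrt n)$ (Proposition~\ref{prop:A_alpha} with $\gamma=1$), so the $\mathcal M_{k,\ell,r}^{\{x\}}$ machinery cannot get below roughly $O(n^{-3/4})$ and is simply the wrong tool here. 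A second, structural point: your $\lor$-rooted ``new variables'' case analysis addresses a configuration that cannot occur. If an $\lor$-rooted tree computes $x$, every child computes a function $\le x$, and a literal that is $\le x$ must equal $x$ (dually for an $\land$-root, every first-level leaf is forced to be labelled $x$). So there are no first-level leaves carrying labels $\alpha_1,\dots,\alpha_k\ne x$, and the iterative falsification argument has nothing to act on.

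That forced structure is exactly what the paper exploits, and it is where the factor $n^{-2}$ actually comes from. After discarding the $O(n^{-5/2})$ family with no first-level leaf (Proposition~\ref{prop:noleaf} plus symmetry over the $2n$ literals), an $\land$-rooted tree computing $x$ is precisely: a root, at least one first-level leaf labelled $x$, and all remaining children being either leaves labelled $x$ or non-leaf trees computing functions $\ge x$ (dually $\le x$ for an $\lor$-root, with the same generating function by symmetry). This gives the generating function $T_x(z)=\frac{2z^2}{(1-(L_x(z)-z))(1-L_x(z))}-2z^2$, where $L_x$ counts trees computing functions $\ge x$; the two mandatory nodes contribute $\rho^2\sim 1/(4n^2)$, and Proposition~\ref{prop:comparison} --- which you cite but never actually deploy --- gives $\lim_{z\to\rho}L_x'(z)/A'(z)\sim\mathbb P_n(\True)$, whence $\mu_n(\mathcal T_x)\sim 4\rho^2\,\mathbb P_n(\True)\sim\mu_n(\mathcal X)$. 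The theorem then follows by sandwiching with the trivial lower bound $\mathbb P_n(x)\ge\mu_n(\mathcal X)$; no separate $o(n^{-2})$ estimate on an exceptional family is ever needed. To repair your write-up you would have to replace the $\mathcal M_{k,\ell,r}$ bounds by this generating-function computation, i.e.\ by an argument showing that the non-leaf subtrees contribute a factor $\mathbb P_n(\True)(1+o(1))$.
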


\begin{proof}
Thanks to Lemma~\ref{lem:SX} and Proposition~\ref{prop:alpha}, we know that
\[\mathbb{P}_n(x) \geq \frac{\mathbb{P}_n(\True)}{2n^2}\geq \frac{\alpha}{2n^2}\]
when $n$ tends to infinity.
Let $t$ be a tree computing $x$. Let us assume that it is rooted by an $\land$ (the case of an
$\lor$-rooted tree would be treated in the very same way). The family of trees computing $x$ with no
leaf on the first level has the same limiting ratio for all
$x\in\{x_1,\bar{x}_1,\ldots,x_n,\bar{x}_n\}$. Therefore, if we denote this family by
$\mathcal{A}^{(0)}_x$, its limiting ratio satisfies (\emph{cf.}~Proposition~\ref{prop:noleaf})
\[2n\mu_n(\mathcal{A}^{(0)}_x) \leq \mu_n(\mathcal{A}^{(0)}) \sim \frac1{n\sqrt{2n}}.\]
Thus, the limiting ratio of trees with no leaf on the first level, computing $x$ has order
$\mathcal{O}(n^{-\nicefrac52})$. This family is negligible in comparison with the family of simple
$x$ trees. Thus we can focus on the family $\mathcal T_x$ of trees having at least one leaf on the
first level.

The leaves on the first level of $t$ have to be labelled by $x$ since $t$ computes $x$. And the
non-leaf subtrees calculate functions that are larger than $x$ (in the sense of
Defintion~\ref{df:larger}). Thus, a tree computing $x$ is almost surely an $\land$-rooted (resp. $\lor$-rooted) tree with leaves on the first level labelled by $x$ and with non-leaf subtrees larger than $x$ (resp. smaller than $x$).

Let us denote by $L_x(z)$ the generating function of trees larger than $x$. In view of Proposition~\ref{prop:comparison}, we have 
\[\lim_{z\to\rho}\frac{L'_x(z)}{A'(z)} \sim \mathbb{P}_n(\True), \text{ when } \nti.\]
Note also that, by symmetry, the family of trees smaller than $x$ has the same generating
function. Thus, the above described family of trees has the same limiting ratio as that of all
trees computing $x$ and its generating function is given by
\[T_x(z) = \frac{2z^2}{(1-(L_x(z)-z))(1-L_x(z))} - 2z^2. \]
The limiting ratio is then 
\[
\mu_n(\mathcal{T}_x) 
\sim 
2\rho^2 \left(\frac1{(1-(L_x(\rho)-\rho))^2(1-L_x(\rho))} + \frac1{(1-(L_x(\rho)-\rho))(1-L_x(\rho))^2}\right)
\lim_{z\to\rho}\frac{L'_x(z)}{A'(z)} 
\sim 4\rho^2\ \mathbb{P}_n(\True), 
\]
as $\nti$. Thus,
\[
\mathbb{P}_n(x) \sim \mu_n(\mathcal{X}) \sim \frac{\mathbb{P}_n(\True)}{n^2}, \text{ as }\nti.
\qedhere
\]
\end{proof}

\section{General case: minimal trees and expansions.}
\label{part:general}

In this last section we prove the last statement of Theorem~\ref{thm:general}. We use different
\emph{expansions} of trees, as it was done in other random Boolean tree models (\emph{cf.}~\cite{FGGG12}
for implication random trees and \cite{GGKM15} for and/or trees). The first subsection defines the
expansions, the second subsection states an asymptotic lower bound for $\mathbb{P}_n(f)$, and the
third subsection states an asymptotic upper bound and thus completes the proof of
Theorem~\ref{thm:general}.

\subsection{Expansions}
\begin{df}
Let $t$ be an associative tree. The tree given by adding a new subtree $t_e$ to an internal node $\nu$ of $t$ is called an {\bf expansion} of $t$.
An expansion is {\bf valid} if the expanded tree computes the same function as $t$.
\begin{itemize}
\item The expansion is called a {\bf tautology expansion} (resp. a contradiction expansion) if the
added tree $t_e$ is a tautology (resp. a contradiction) and if $\nu$ is labelled by a $\land$
(resp. $\lor$). Obviously, such an expansion is valid.
\item It is called a {\bf $\boldsymbol{\hat{B}}$-expansion} if the added tree $t_e$ is not a single leaf.
\end{itemize}
\end{df}

Given a family of trees $\mathcal{T}$, we denote by $E(\mathcal{T})$ the set of trees obtained by
a single tautology expansion of a tree in $\mathcal{T}$, by $E^k(\mathcal{T})$ the set of trees
obtained by $k$ successive tautology expansions done at (not necessarily distinct) vertices of a
tree in $\mathcal{T}$, and by $E^{\geq k}(\mathcal{T})$ the set of all trees obtained by at least
$k$ successive tautology expansions done at (not necessarily distinct) vertices of a tree in
$\mathcal{T}$. Finally, we set $E^*(\mathcal{T}):=\bigcup_{k\ge 1} E^k(\mathcal{T})$. 

If the considered expansions are $\hat{B}-$expansions, we change the above notation by 
replacing $E$ by $E_{\hat{B}}$.

\begin{rmq}
Whatever type of expansion (tautology or $\hat{B}$) we consider, note that nesting expansions
(adding $t_e$ to $t$, then expanding $t_e$, and so on) does not generate new structures, since
this can always be realized by a single expansion. Therefore, requiring that the expansions are
done at the vertices of the original tree is no restriction. 
\end{rmq}

\begin{rmq}
\label{rmk:inclusions}
For every family $\mathcal{T}$ of trees the inclusion $E^*(\mathcal{T}) \subseteq
E_{\hat{B}}^*(\mathcal{T})$ holds.
\end{rmq}

\subsection{Tautology expansions}
\begin{prop}
\label{prop:several_exp}
For $n$ tending to infinity, the limiting ratio of $E^*(\mathcal{M}_f)$ is asymptotically equal to the limiting
ratio of $E(\mathcal{M}_f)$. Furthermore, 
\[\mu_n(E(\mathcal{M}_f)) = \Theta\left(\frac1{n^{L(f)}}\right), \text{ as }\nti.
\]
Since every tree in $E^*(\mathcal{M}_f)$ computes $f$, this implies 
\[\mathbb{P}_n(f) \geq \mu_n(E^*(\mathcal{M}_f)) = \Theta\left(\frac1{n^{L(f)}}\right).\]
\end{prop}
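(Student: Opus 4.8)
The plan is to work directly with the generating functions of $E(\mathcal{M}_f)$ and of the larger families $E^k(\mathcal{M}_f)$, exploiting that a minimal tree is a fixed finite object: there are only $|\mathcal{M}_f|$ of them and neither this number nor $L(f)$ depends on $n$ (extra variables never help compute $f$), so an expansion merely grafts a tautology or a contradiction subtree onto one of the at most $L(f)$ internal nodes of such a tree. Let $T(z)$ be the generating function of $\vee$-rooted tautologies; by the AND/OR-plus-negation symmetry this also counts $\wedge$-rooted contradictions, so every valid constant-expansion contributes exactly one factor $T(z)$. Two properties of $T$ will be used throughout: (i) a $\vee$-rooted tautology is never a single leaf, hence $T(z)\prec\hat{B}(z)$ coefficientwise and therefore $T(\rho)<\tfrac{1}{\sqrt{2n}}$ by Proposition~\ref{prop:sing}; and (ii) $\lim_{z\to\rho}T'(z)/A'(z)$ is pinched between two positive constants — it is at least $\mu_n(\mathcal{S}_n)\ge 0.12+o(1)$ (Proposition~\ref{prop:alpha}) and at most $\mathbb{P}_n(\True)<\tfrac12$ (Lemma~\ref{lem:beta}); in fact Theorem~\ref{thm:st} gives $\lim_{z\to\rho}T'(z)/A'(z)=\mathbb{P}_n(\True)+o(1)$. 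All generating functions occurring below are algebraic, obtained from the basic classes by finitely many subtree substitutions, so they share the unique square-root singularity $\rho$ of $A(z)$ and Lemma~\ref{lem:limiting_ratio} applies; in particular the limiting ratios in the statement exist.

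First I would prove $\mu_n(E(\mathcal{M}_f))=\Theta(n^{-L(f)})$. Fixing a minimal tree, one of its internal nodes, and a position among that node's children, grafting a constant subtree there is injective and produces a subfamily of $E(\mathcal{M}_f)$ with generating function exactly $z^{L(f)}T(z)$; on the other hand, a union bound over the constantly many triples (minimal tree, internal node, position) gives $E_{\mathcal{M}_f}(z)\prec C_f\,z^{L(f)}T(z)$ for a constant $C_f$ depending only on $f$. Since $\tfrac{d}{dz}\big(z^{L(f)}T(z)\big)=L(f)z^{L(f)-1}T(z)+z^{L(f)}T'(z)$ and $A'(z)\to\infty$ as $z\to\rho$, Lemma~\ref{lem:limiting_ratio} yields that the limiting ratios of both the lower and the upper family equal a constant times $\rho^{L(f)}\lim_{z\to\rho}T'(z)/A'(z)$; as $\mu_n(E(\mathcal{M}_f))$ is sandwiched between them, with $\rho\sim\tfrac1{2n}$ and property (ii) this gives $\mu_n(E(\mathcal{M}_f))=\Theta(\rho^{L(f)})=\Theta(n^{-L(f)})$.

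Next I would control $\bigcup_{k\ge 2}E^k(\mathcal{M}_f)$. A tree in $E^k(\mathcal{M}_f)$ is obtained by grafting $k$ constant subtrees at internal nodes of a single minimal tree; distributing $k$ among its at most $L(f)$ internal nodes and choosing the relative positions of the new children at a node of degree $d$ (a factor $\binom{d+j}{j}$ when $j$ of them land there) shows, after a union bound, that $[z^m]E^k_{\mathcal{M}_f}(z)\le P_f(k)\,[z^m]\big(z^{L(f)}T(z)^k\big)$, where $P_f(k)$ is a polynomial in $k$ (the positional factors sum to a single binomial coefficient in $k$). Applying Lemma~\ref{lem:limiting_ratio} exactly as above,
\[
\mu_n\!\big(E^k(\mathcal{M}_f)\big)\le P_f(k)\,k\,T(\rho)^{k-1}\,\rho^{L(f)}\Big(\lim_{z\to\rho}\tfrac{T'(z)}{A'(z)}\Big).
\]
Since $T(\rho)<\tfrac1{\sqrt{2n}}$, the series $\sum_{k\ge 2}P_f(k)\,k\,T(\rho)^{k-1}$ is $O(T(\rho))=O(n^{-1/2})$, so
\[
\mu_n\!\Big(\bigcup_{k\ge 2}E^k(\mathcal{M}_f)\Big)=O\!\Big(\tfrac{\rho^{L(f)}}{\sqrt{n}}\Big)=o\big(\mu_n(E(\mathcal{M}_f))\big).
\]

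Combining the last two steps, $\mu_n(E^*(\mathcal{M}_f))=\mu_n(E(\mathcal{M}_f))(1+o(1))\sim\mu_n(E(\mathcal{M}_f))=\Theta(n^{-L(f)})$. Finally, tautology and contradiction expansions are valid, so every tree of size $m$ in $E^*(\mathcal{M}_f)$ computes $f$; hence $\mathbb{P}_{m,n}(f)\ge|E^*(\mathcal{M}_f)\cap\mathcal{A}_{m,n}|/A_{m,n}$ for every $m$, and letting $m\to\infty$ gives $\mathbb{P}_n(f)\ge\mu_n(E^*(\mathcal{M}_f))=\Theta(n^{-L(f)})$. I expect the delicate point to be the tail bound of the third paragraph: it would be worthless to know $\mu_n(E(\mathcal{M}_f))=\Theta(n^{-L(f)})$ if the multi-expansion families had the same order, and the whole argument rests on the easy but essential observation that each further expansion costs a factor $T(\rho)=O(n^{-1/2})$, i.e. on $T(z)\prec\hat{B}(z)$ together with Proposition~\ref{prop:sing}; the only real care needed there is to keep the combinatorial factor $P_f(k)$ polynomial in $k$ so that the sum over $k$ converges and stays $O(T(\rho))$.
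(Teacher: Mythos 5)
Your proposal follows essentially the same route as the paper: sandwich $\mu_n(E(\mathcal{M}_f))$ between constant multiples of $\rho^{L(f)}\lim_{z\to\rho}T'(z)/A'(z)$ using the bounded number of grafting positions in a minimal tree, then kill the multi-expansion tail via $\Phi_k(z)\prec m_f z^{L(f)}\,(\text{poly in }k)\,T(z)^k$ together with $T(\rho)\le\hat B(\rho)=O(n^{-1/2})$, which is exactly the paper's argument (the paper just makes your constant $C_f$ and polynomial $P_f(k)$ explicit as $\lfloor 3L(f)/2\rfloor$ and $\binom{\lfloor 3L(f)/2\rfloor+k-1}{k}$). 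The argument is correct and no genuinely different ideas are involved.
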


\begin{proof}
Let $\Phi_k(z)$ be the generating function of $E^k(\mathcal{M}_f))$. Given a tree $t_e$, the number of places where $t_e$ can be added to a given minimal tree $t$ is 
\[P_t = \sum_{i \text{ internal node of } t} (d(i) + 1)\]
where $d(i)$ is the number of children of the internal node $i$. Let $i_t$ denote the number of
internal nodes of $t$ and $|t|$ the size of $t$. Then 
$P_t = i_t + |t| - 1$. Since $t$ is minimal (\emph{i.e.}\ $|t|=L(f)$) and since $1 \leq i_t\leq
\lfloor\frac{L(f)}{2}\rfloor$, we have $L(f) \leq P_t \leq \frac{3L(f)}{2}$ which yields 
\[
m_f z^{L(f)} L(f) T(z) \prec \Phi_1(z) \prec m_f z^{L(f)} \frac{3L(f)}{2} T(z),
\]
and thus 
\begin{equation} \label{Schranken}
m_f L(f) \rho^{L(f)} \lim_{z\to\rho} \frac{T'(z)}{A'(z)}\leq \mu_n(E(\mathcal{M}_f)) \leq
m_f\frac{3L(f)}{2} \rho^{L(f)} \lim_{z\to\rho} \frac{T'(z)}{A'(z)}. 
\end{equation} 
From Section~\ref{part:tauto} we know that $0<\alpha\le \mathbb{P}_n(\True) = \lim_{z\to\rho}
\frac{T'(z)}{A'(z)}\le \beta$ and since $\rho\sim\frac1{2n}$ when $n$ tends to infinity, we get  
\[\mu_n(E(\mathcal{M}_f)) = \Theta\left(\frac1{n^{L(f)}}\right).\]
If we do $k$ successive expansions in a minimal tree, we have at most $\lfloor\nicefrac{3
L(f)}2\rfloor$ different places for the first one, $\lfloor\nicefrac{3 L(f)}2\rfloor+1$ for the
second one, and so on. We thus have the following inequality: 
\[
\Phi_k(z) \prec m_f z^{L(f)} \begin{pmatrix}\lfloor\nicefrac{3 L(f)}2\rfloor+k-1\\k\end{pmatrix} T(z)^k
\]
and thus
\begin{align*}
\mu_n(E^k(\mathcal{M}_f)) = \lim_{z\to\rho}\frac{\Phi'_k(z)}{A'(z)} 
&\leq m_f \rho^{L(f)} \begin{pmatrix}\lfloor\nicefrac{3 L(f)}2\rfloor+k-1\\k\end{pmatrix} k
T(\rho)^{k-1} \lim_{z\to\rho}\frac{T'(z)}{A'(z)}\\
&\leq \beta m_f \rho^{L(f)} \binom{\lfloor\nicefrac{3 L(f)}2\rfloor+k-1}k
kT(\rho)^{k-1}, 
\end{align*}
for all $n\ge 0$. Hence 
\begin{align*} 
\mu_n(E^{k\geq 2}(\mathcal{M}_f)) 
&\le\beta m_f \rho^{L(f)} \lfloor\nicefrac{3
L(f)}2\rfloor\left(\frac{1}{(1-T(\rho))^{\lfloor\nicefrac{3 L(f)}2\rfloor+1}}-1\right)
\end{align*} 
where we used $\sum_{k\geq 2} \binom{C+k-1}{k}kz^{k-1}= \frac{C}{(1-z)^{C+1}}-C$ which is an
immediate consequence of 
\begin{equation}
\label{eq:binome}
\sum_{k\geq 0} \binom{C+k-1}{k}z^k = \frac1{(1-z)^C}.
\end{equation}
Since $T(\rho)\leq \hat{B}(\rho)\leq \nicefrac1{\sqrt{2n}}$ (a tautology cannot be a single leaf)
and $\frac{C}{(1-z)^{C+1}}-C=O(z)$, we obtain
\[\mu_n(E^{\geq 2}(\mathcal{M}_f)) = \mathcal{O}\left(\frac1{n^{L(f)+\nicefrac12}}\right).\]
By \eqref{Schranken} the same calculations yield the lower bound.
\end{proof}

\subsection{Irreducible trees}

\begin{df}
Let $t$ be a tree computing $f$. If $t$ cannot be obtained by a tautology expansion of a smaller tree computing $f$, then $t$ is called {\bf irreducible}. We denote by $\mathcal{I}_f$ the set of irreducible trees of $f$ which are not minimal trees of $f$.
\end{df}

Take a tree computing $f$ and simplify it according to tautology expansions until it is
irreducible. The simplified tree is either in $\mathcal{M}_f$ or in $\mathcal{I}_f$. Thus
\begin{equation}
\label{eq:irr}
\mathbb{P}_n(f) \leq \mu_n(E^*(\mathcal{M}_f)) + \mu_n(E^*(\mathcal{I}_f)).
\end{equation}

\begin{prop}
\label{prop:irreductibles}
We have the following asymptotic result: $\mu_n(E^*(I_f)) = o\left(\nicefrac1{n^{L(f)}}\right)$.
\end{prop}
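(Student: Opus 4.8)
The plan is to show that an irreducible tree $t\in\mathcal I_f$, which by definition is not obtained from a smaller $f$-computing tree by a tautology expansion but is strictly larger than a minimal tree, must nonetheless contain a great deal of ``extra'' structure beyond a minimal tree, and that this extra structure forces $t$ to lie in one of the auxiliary families $\mathcal M_{k,\ell,r}^\Gamma$ studied in Section~\ref{part:useful}, up to a bounded-size combinatorial core. Concretely, first I would fix a minimal tree $t_0\in\mathcal M_f$ and argue that since $|t|>L(f)$ and $t$ is not a tautology expansion of anything, any way of ``reducing'' $t$ towards an $f$-computing tree must go through a non-tautology modification; following the reduction argument used for non-simple tautologies in the proof of Theorem~\ref{thm:st}, this yields the existence of a subtree of $t$ (rooted one level below the root) which has few ``new'' variables on its first level relative to a bounded set $\Gamma$ of ``old'' literals. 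This is exactly the combinatorial certificate that places $t$ inside $\bigcup_{k,\ell\ge 0}\bigcup_{r=0}^{\ell-1}\mathcal M_{k,\ell,r}^\Gamma$ for some $\Gamma$ of bounded size depending only on $f_0$ (hence on $L(f)$).

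Once that containment is established, I would split the union $\bigcup_{k}\bigcup_{\ell}\bigcup_{r\le\ell-1}\mathcal M_{k,\ell,r}^\Gamma$ into the same three regimes as in the proof of Theorem~\ref{thm:st}: small $k$ (at most $n^{1/4}$ distinct first-level labels), covered by Lemma~\ref{lem:small_k}; large $\ell$ (more than $n^{1/8}$ non-leaf subtrees at the root), covered by Lemma~\ref{lem:big_l}; and the remaining regime $k=\Omega(n^{1/4})$, $\ell=\mathcal O(n^{1/8})$, $r\le\ell$, covered by Lemma~\ref{lem:Mklr}, which gives each such piece a limiting ratio $\mathcal O(n^{-3/2})$. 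Summing over the $\mathcal O(n\cdot n^{1/4})$ choices of $(k,\ell,r)$ in that last regime gives $\mathcal O(n^{-1/4})$, and the first two regimes contribute $o(1)$. The subtlety compared with the tautology case is that here we need a bound of order $o(n^{-L(f)})$, not just $o(1)$, so I must exploit that $t$ is built on top of an actual minimal tree: the minimal tree contributes a factor $\rho^{L(f)}=\Theta(n^{-L(f)})$ to the generating function, and the irreducibility forces the ``extra'' part to be one of the $\mathcal M_{k,\ell,r}^\Gamma$ structures, so the limiting ratio of $E^*(\mathcal I_f)$ is bounded by $\Theta(n^{-L(f)})$ times the $\mathcal O(n^{-1/4})$ coming from the auxiliary families, i.e.\ $\mathcal O(n^{-L(f)-1/4})=o(n^{-L(f)})$. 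Finally, I would note that passing from $\mathcal I_f$ to $E^*(\mathcal I_f)$ costs only a bounded factor by the same geometric-series estimate as in Proposition~\ref{prop:several_exp} (using $T(\rho)\le 1/\sqrt{2n}\to 0$), so the $o(n^{-L(f)})$ bound is preserved.

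The main obstacle I anticipate is making the first step rigorous: precisely formalising what ``extra structure'' an irreducible non-minimal tree must carry and showing that this indeed lands $t$ inside $\bigcup\mathcal M_{k,\ell,r}^\Gamma$ with a $\Gamma$ whose size is controlled by $L(f)$ alone. One has to be careful that irreducibility only forbids \emph{tautology} expansions, so the certificate is not simply ``$t$ contains a tautology subtree''; rather one runs the $\False$-assignment argument from Theorem~\ref{thm:st} on the non-leaf subtrees hanging at the appropriate node to extract a subtree with at most $\ell-1$ new variables, where the set of ``old'' literals now includes the $\mathcal O(L(f))$ literals appearing in a fixed minimal tree together with the first-level labels. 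Keeping the bookkeeping of which literals count as ``old'' consistent with the definition of $\mathcal M_{k,\ell,r}^\Gamma$, and checking that $p=|\Gamma|=\mathcal O(1)$ does not spoil the estimates in Lemmas~\ref{lem:small_k}--\ref{lem:Mklr} (which were stated with $p$ present but essentially treated as negligible), is where the real work lies.
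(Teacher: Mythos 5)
Your overall strategy (force an irreducible non-minimal tree to carry extra structure landing in the families $\mathcal{M}_{k,\ell,r}^{\Gamma}$ of Section~\ref{part:useful}, then reuse Lemmas~\ref{lem:small_k}, \ref{lem:big_l} and~\ref{lem:Mklr}) points in the right direction, but the step you yourself flag as the obstacle is exactly where the proposal breaks down, and the fix is not the one you sketch. You assume that $t\in\mathcal{I}_f$ sits ``on top of'' a minimal tree $t_0\in\mathcal{M}_f$, so that the generating function factors as $\rho^{L(f)}$ for the core times the contribution of a plugged $\mathcal{M}$-structure. There is no reason for this: irreducibility only excludes being a \emph{tautology} expansion of a smaller $f$-computing tree, and a general tree computing $f$ need not be any kind of expansion of a minimal tree. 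Worse, the ``core'' of $t$ may involve many leaves labelled by inessential variables, and counting those costs a factor of order $n$ per leaf, which would destroy the $\Theta(n^{-L(f)})$ you need. The paper's actual mechanism is to assign all inessential variables to $\True$ and simplify, producing a generalized skeleton $t^{\star}$ (possibly with unary nodes and connective-labelled leaves) whose leaves carry only essential variables of $f$ and whose size is still at least $L(f)$; the number $S_{\ell}$ of such skeletons of size $\ell$ is independent of $n$, and this is what legitimately produces the factor $\rho^{L(f)}$ with $\mathcal{O}(1)$ label choices. Two auxiliary lemmas (the case where the simplification cuts no non-leaf subtree, handled by a shift of the singularity, and the case where it cuts at least $L(f)$ single inessential leaves, handled by the explicit construction of Lemma~\ref{lem:few_leaves}) are needed to reduce to the situation where at least one non-leaf subtree and few single leaves are cut; none of this appears in your plan.

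Even granting a decomposition, your arithmetic conflates two different quantities: the $\mathcal{O}(n^{-1/4})$ from the proof of Theorem~\ref{thm:st} is the limiting ratio of $\bigcup\mathcal{M}_{k,\ell,r}^{\Gamma}$ as a family of whole trees, whereas when such a tree is \emph{plugged into} a core the relevant factor is $M(\rho)\le\hat{B}(\rho)\sim 1/\sqrt{2n}$ (or the limit of $M'(z)/A'(z)$ in exactly one factor of the product). Moreover the cut non-leaf subtree need not lie in $\bigcup\mathcal{M}_{k,\ell,r}^{\Gamma_f}$ at all: it may instead carry an essential variable on its first level, i.e.\ lie in $\mathcal{A}_{\Gamma_f}$ with limiting ratio $\mathcal{O}(1/\sqrt{n})$ by Proposition~\ref{prop:A_alpha}; this alternative is missing from your certificate. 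Finally, the case where the skeleton has size at least $L(f)+1$ must be treated separately, since there the gain comes from the extra power of $\rho$ rather than from the plugged structure. So while the toolbox you invoke is the right one, the proposal does not contain the idea (inessential-variable assignment and the skeleton family) that makes the $n^{-L(f)}$ factor appear, and the claimed reduction to ``minimal tree plus $\mathcal{M}$-structure'' is unjustified.
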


To prove this proposition, we have to understand better the shape of an irreducible tree of $f$. Let $t$ be such a tree. Let us ``simplify'' the tree as follows: 
\begin{itemize}
\item assign all leaves of $t$ which are labelled by inessential variables\footnote{A variable $x$
is an inessential variable of $f$ if the restriction of $f$ on the subset $\{x=\True\}$ is equal to its restriction on the subset $\{x=\False\}$.} 
of $f$ to $\True$, and then 
\item simplify the tree as follows: As soon as a leaf is assigned to $\True$ (resp. $\False$) and
its parent is $\land$ (resp. $\lor$), we cut the leaf. If its parent is $\lor$ (resp. $\land$), we
cut the subtree rooted at this $\lor$ (resp. $\land$), \emph{i.e.}\ at the parent.
\end{itemize}
The obtained tree, denoted by $t^{\star}$ contains no inessential variable and still computes $f$.
It possibly has internal nodes with a single child (called unary nodes), and connectives labelling
some leaves (where $\lor$-leaves compute $\False$ and $\land$-leaves compute $\True$, the other
leaves being labelled by essential variables of $f$. But, since this tree computes $f$, it can be
seen that its size cannot be smaller than $L(f)$. Indeed, a tree with unary nodes and leaves
labelled by connectives can be simplified such that we obtain a proper and/or tree that still computes $f$,
\emph{i.e.}\ with at least $L(f)$ nodes, and this simplification process reduces the number of nodes.
The tree $t^{\star}$ belongs to the following family of trees:
\begin{df}
Let us denote by $\mathcal{S}$ the set of trees with internal nodes labelled by $\land$ and $\lor$
in a stratified way and with leaves labelled by essential variables of $f$ (or their negations) or $\land$ and $\lor$
(again in compliance with the stratification), in which internal nodes can have one child or more. We denote by $S_{\ell}$ the number of such trees of size $\ell$ (the size being the total number of nodes of the tree).
\end{df}

\begin{rmq}
The number $S_{\ell}$ only depends on $\gamma$, the number of essential variables of $f$, and on
$\ell$. 
\end{rmq}

Note that during the simplification process, we have cut either leaves or non-leaf subtrees. We will
prove in the following lemmas that the family of trees such that the simplification process cuts
no non-leaf tree and the family of trees in which we have cut ``many'' single leaves is negligible
in comparison with the family of trees computing $f$ (the limiting ratio of which is at least of
order $\nicefrac1{n^{L(f)}}$).

\begin{lem}
Let $\Gamma\subseteq \{x_1,\ldots, x_n\}$ be of cardinality $\gamma$ and set
$Y=\{x_1,\ldots,x_n\}\setminus \Gamma$. Moreover, let $\mathcal{N}_{\gamma}$ be the family of
trees such that no node labelled by $\lor$ (resp. $\land$) has a leaf labelled by a positive
(resp. negated) variable from $Y$ as a child. Then $\mu_n(\mathcal{N}_{\gamma})=0$,
when $n$ is large enough.
\end{lem}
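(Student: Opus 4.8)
The plan is to estimate the limiting ratio of $\mathcal{N}_\gamma$ by bounding its generating function and showing that, after dividing by $A'(z)$ and passing to the limit $z\to\rho$, the result is asymptotically negligible — in fact $O(n^{-\gamma-1})$ or smaller, hence $0$ after the appropriate normalisation. The point is that forbidding, at every $\lor$-node (resp.\ $\land$-node), a leaf labelled by one of the $2(n-\gamma)$ ``bad'' literals is a severe restriction: on the first level it cuts the number of available leaf-labels from $2n$ down to $\gamma + (n-\gamma) = n + \gamma$ (a $\lor$-root may still carry the $n-\gamma$ negated bad variables and the $2\gamma$ literals from $\Gamma$), and more importantly the same restriction propagates recursively into every subtree. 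So I would first set up the generating function $N(z)$ for $\mathcal{N}_\gamma$ by a grammar analogous to the one for $\hat A$ in Section~\ref{part:existence}, but with the leaf term $2nz$ replaced by $(n+\gamma)z$ at $\lor$-rooted trees and by $(n+\gamma)z$ at $\land$-rooted trees (by symmetry the two are equal). This yields a functional equation of the same algebraic type as \eqref{FG:A}, with $2n$ replaced by $n+\gamma$, so its dominant singularity $\rho_\gamma$ satisfies $\rho_\gamma = \tfrac12\cdot\tfrac1{(n+\gamma)/2 + \sqrt{(n+\gamma)/2}} \sim 1/(n+\gamma)$, which is strictly larger than $\rho \sim 1/(2n)$.

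The key observation is then that $N(z)$ is analytic in a disc strictly larger than $|z|\le\rho$: since $\rho_\gamma > \rho$ for all large $n$, the generating function $N(z)$ has no singularity on the closed disc $|z|\le\rho$, and in particular $N'(\rho)$ is finite. On the other hand $A'(z)\to\infty$ as $z\to\rho$ because $A(z)$ has a square-root singularity at $\rho$ (Proposition~\ref{prop:sing} and the discussion in Section~\ref{part:existence}). Therefore
\[
\mu_n(\mathcal{N}_\gamma) = \lim_{z\to\rho}\frac{N'(z)}{A'(z)} = \frac{N'(\rho)}{\lim_{z\to\rho}A'(z)} = 0,
\]
the numerator being a finite constant and the denominator diverging. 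This is exactly the mechanism already used in the paper to discard negligible families (e.g.\ the ``third term'' in the proof of Proposition~\ref{prop:noleaf}); here it applies a fortiori since $N'(\rho)$ is not merely finite but bounded — in fact one can check $N(\rho) < \hat A(\rho) < 1$ and $N'(\rho) = O(n)$ by crude bounds, although for the statement $\mu_n(\mathcal{N}_\gamma)=0$ mere finiteness of $N'(\rho)$ suffices.

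The one point that needs care — and which I expect to be the main obstacle — is justifying that $\rho_\gamma > \rho$ and hence that $N(z)$ is genuinely analytic past $\rho$; one must make sure the grammar for $\mathcal{N}_\gamma$ really does reduce the leaf weight from $2nz$ to $(n+\gamma)z$ and not to something still asymptotically equal to $2nz$. The subtlety is that an $\lor$-node forbids only the $(n-\gamma)$ \emph{positive} bad variables among its leaf children but allows the $(n-\gamma)$ \emph{negated} bad ones, so naively the leaf alphabet at an $\lor$-node still has size $n-\gamma+\text{(stuff)}$, which could be $\sim n$. The resolution is that the same tree, viewed through its $\land$-nodes, forbids the \emph{negated} bad variables there, so no bad variable — positive or negated — can appear as a leaf child of \emph{any} internal node; hence globally only the $n-\gamma$ literals of one fixed sign per bad variable, plus the $2\gamma$ literals of $\Gamma$, can ever label a leaf whose parent has the ``wrong'' connective, and a short case analysis on the two strata shows the effective per-node leaf alphabet has size at most $n+\gamma$, strictly below $2n$. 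Once this reduction is established, the singularity comparison $\rho_\gamma \sim 1/(n+\gamma) > 1/(2n)\sim\rho$ is immediate from the explicit formula, and the argument closes. I would also remark that, by Lemma~\ref{lem:limiting_ratio}, the limiting ratio exists in the first place because $N(z)$ inherits (trivially, being analytic at $\rho$) the required regularity at $z=\rho$.
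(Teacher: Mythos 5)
Your proposal is correct and follows essentially the same route as the paper: the per-node restriction forces the effective leaf alphabet at every internal node down to $n+\gamma$ literals, so the generating function of $\mathcal{N}_\gamma$ satisfies the same functional equation as $\hat A$ with $2n$ replaced by $n+\gamma$, its singularity $\sim 1/(n+\gamma)$ lies strictly beyond $\rho\sim 1/(2n)$, and analyticity at $\rho$ forces the limiting ratio to vanish. (Only trivial slips: the displayed count should read $2\gamma+(n-\gamma)=n+\gamma$, and the exact location of $\rho_\gamma$ is $\tfrac12\cdot\bigl((n+\gamma)/2+\sqrt{n+\gamma}\,\bigr)^{-1}$; neither affects the argument.)
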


Note that the family $\mathcal{N}_{\gamma}$ contains the family of trees computing a function $f$
having $\Gamma$ as its set of essential variables and such that the simplification process only
cuts leaves and no non-leaf tree. 

\begin{proof}
The family $\mathcal{N}_{\gamma}$ has the same limiting ratio as associative trees in which leaves
are labelled by literals from a set of cardinality $2n-(n-\gamma) = n+\gamma$. Therefore, the
singularity $\nu_n$ of the generating function of this family is of squareroot type and satisfies 
$\nu_n \sim \nicefrac1{n+\gamma}$ and is thus strictly larger than $\rho_n\sim \nicefrac1{2n}$ 
for large enough $n$. This implies the assertion.
\end{proof}

The following lemma ensures that we have cut only a few ``single'' leaves:
\begin{lem}\label{lem:few_leaves}
Let $\ell$ be an integer and $\Gamma\subseteq\{x_1,\ldots, x_n\}$ be of cardinality $\gamma$. Set 
$Y=\{x_1,\ldots,x_n\}\setminus \Gamma$ and let us denote by $\mathcal{N}_{\ell}$ the family of
trees with at least $\ell$ leaves labelled by variables from $Y$ (or their negations) such that 
none of these leaves has an ancestor labelled by $\lor$ (resp. $\land$) which has a child being a leaf labelled by a positive (resp. negated) variable from $Y$. 
Then $\mu_n(\mathcal{N}_{\ell}) = o\left(\nicefrac1{n^{\ell+1}}\right)$.
\end{lem}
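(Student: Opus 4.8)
The plan is to bound the limiting ratio of $\mathcal{N}_\ell$ via a generating function decomposition that isolates the $\ell$ ``special'' leaves and marks, for each of them, the chain of ancestors constrained by the non-adjacency condition. First I would set up the following combinatorial picture: a tree in $\mathcal{N}_\ell$ can be encoded by choosing $\ell$ marked leaves, each labelled by one of the $2(n-\gamma)$ literals over $Y$, together with, for each marked leaf, the branch of ancestors from that leaf up to (at most) the root. Along such a branch, every $\lor$-node (resp.\ $\land$-node) is forbidden to have a child that is a leaf labelled by a positive (resp.\ negated) $Y$-variable; this is exactly the same kind of ``forbidden literals on the first level'' restriction already handled in Section~\ref{part:useful}, so each node on the branch contributes a factor like $z/(1-\hat B(z)-(\text{constant})z)$ rather than $z/(1-\hat B(z))$, and away from the branches the tree is an arbitrary associative tree, contributing the generating function $\hat A(z)$ or $\hat B(z)$. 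The upshot is an upper bound
\[
N_\ell(z) \;\prec\; \bigl(2(n-\gamma)\bigr)^\ell \, z^{\ell}\cdot \Psi(z)^{\ell}\cdot (\text{bounded combinatorial factors})\cdot \hat A(z),
\]
where $\Psi(z)$ packages one special leaf together with its constrained ancestor chain; the key point is that $z^\ell$ times the leaf-label count $(2(n-\gamma))^\ell = O(n^\ell)$, after evaluating at $\rho\sim 1/(2n)$ and applying Lemma~\ref{lem:limiting_ratio}, gives a factor $(2n)^\ell\rho^\ell = O(1)$, and one then needs \emph{one extra} power of $\rho$ beyond that to reach $o(1/n^{\ell+1})$.

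The extra power of $n$ should come from two sources that must be combined carefully. One power of $\rho$ is the overall $z$ marking the root (as in all the generating functions of Section~\ref{part:misc}). The remaining saving should come from the ancestor chains: each constrained chain, when differentiated and evaluated at $\rho$ via Lemma~\ref{lem:limiting_ratio}, behaves like a factor of order $\hat B(\rho) \le 1/\sqrt{2n}$ per node above the leaf, mirroring the behaviour exploited in Lemma~\ref{Klem} and Lemma~\ref{lem:big_l}. Since the $\ell$ marked leaves are required to be genuinely distinct leaves of the tree and the tree must compute a non-degenerate function, their ancestor chains cannot all be length zero, which forces at least a few extra $\hat B(\rho)$-type factors. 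Making this quantitative is where I expect the main obstacle: one has to argue that the configurations where the saving is \emph{not} $o(1/n^{\ell+1})$ — roughly, where all $\ell$ special leaves sit directly at the first level with very short ancestor chains — are themselves negligible, either because the leaf-label restriction (no opposite literal allowed among the first-level children, exactly the mechanism from Section~\ref{part:tauto}) kills them, or because having $\ell$ first-level leaves over the small alphabet $Y$ together with the constraint already drives the ratio down. This is essentially a bookkeeping-on-$\rho$-powers argument in the spirit of Lemmas~\ref{lem:small_k}--\ref{lem:big_l}, but the interaction between ``many special leaves'' and ``short chains'' is the delicate case.

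Concretely, the steps I would carry out are: (1) write the generating function $N_\ell(z)$ (or an over-count $\succ N_\ell(z)$) by the marked-leaf-plus-ancestor-chain decomposition above, being generous about multiple counting as in Lemma~\ref{lem:JM}; (2) factor out $(2(n-\gamma)z)^\ell$ and recognize it as $O(1)$ after evaluation at $\rho$; (3) apply Lemma~\ref{lem:limiting_ratio} and bound each ancestor-chain factor using $\hat B(\rho)\le 1/\sqrt{2n}$ and $1-\hat B(\rho)-cz \ge \tfrac12(1-O(1/\sqrt n))$ exactly as in the proof of Lemma~\ref{Klem}; (4) check that the total power of $n$ gained is at least $n^{-(\ell+1)}$ with a genuinely vanishing prefactor, treating the short-chain/first-level case separately using the impossibility of a repeated-opposite-literal configuration (Section~\ref{part:tauto}) or a direct count as in Lemma~\ref{lem:small_k}; and (5) sum over the finitely relevant range of the auxiliary parameters (number of non-leaf subtrees $\le n^{1/8}$, etc., by Lemma~\ref{lem:big_l}) to absorb it into the $o(\cdot)$. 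I expect step (4) to be the real content and everything else to be routine singularity analysis bookkeeping.
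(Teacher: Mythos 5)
There is a genuine gap, and it sits exactly where you place ``the real content'': your power counting cannot reach $o(1/n^{\ell+1})$. You correctly note that the $\ell$ marked leaves with their $2(n-\gamma)$ possible labels contribute $(2(n-\gamma)\rho)^\ell=O(1)$, but you then look for the missing $n^{-(\ell+1)}$ in the root factor plus a saving of order $\hat B(\rho)\le \nicefrac1{\sqrt{2n}}$ per node of the ancestor chains. Two things go wrong. First, the denominators you write, $1-\hat B(z)-cz$, already discard the $2nz$ leaf term; a child-sequence of an internal node is a sequence of objects counted by $\hat A$, of weight $\nicefrac1{(1-\hat A(\rho))}\sim\sqrt{2n}$, not by $\hat B$, of weight $O(1)$ --- so in your encoding the constrained and unconstrained chain nodes have the same order of magnitude and produce no saving at leading order. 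Second, and decisively, even a correct per-chain-node saving evaporates when the chains are short, and short chains (special leaves sitting just below the root) are the \emph{dominant} configuration, not a negligible edge case; the mechanisms you invoke to dispatch it (the birthday-paradox argument of Section~\ref{part:tauto}, or a count as in Lemma~\ref{lem:small_k}) give bounds of order $\nicefrac1{\sqrt n}$ at best, nowhere near $n^{-(\ell+1)}$.

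The actual source of the decay is different. The constraint forces the parent of each special leaf to be an $\land$ (resp.\ $\lor$) node whose \emph{entire first level} contains a positive (resp.\ negated) $Y$-leaf and avoids all $n-\gamma$ negated (resp.\ positive) $Y$-literals as leaves. The generating function of such a parent-subtree, evaluated at $\rho$, is $\Theta(\nicefrac1n)$ rather than $\Theta(1)$: the flanking child-sequences drop from weight $\Theta(\sqrt n)$ to $O(1)$, so the parent's own factor $z\sim\nicefrac1{2n}$ is no longer compensated. The paper's proof therefore contracts the union of the root-to-special-leaf paths to a skeleton $t_0$ with $\ell$ leaves, replaces each leaf of $t_0$ by one of these constrained subtrees (the factor $(zX(z))^\ell$, each factor being $\Theta(\nicefrac1n)$ at $\rho$), and replaces the edges and internal nodes of $t_0$ by similarly constrained chains and sequences (the factors $V(z)$ and $W(z)$, each $O(1)$ at $\rho$); the product of the $\ell$ groups with the extra internal nodes of $t_0$, after the single factor of $n$ returned by differentiating via Lemma~\ref{lem:limiting_ratio}, gives the stated order. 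Note that in this mechanism the decay is \emph{strongest} when the special leaves are near the root --- the opposite of what your chain-length argument predicts --- which is why your plan cannot be repaired by a separate treatment of the short-chain case.
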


Note that the family $\mathcal{N}_{\ell}$ contains the family of trees computing a function $f$
having $\Gamma$ as its set of essential variables and in which the simplification process cuts at
least $\ell$ single leaves. 

\begin{proof}
Let us consider the family of trees obtained as follows (see Figure~\ref{fig:construction}):
\begin{enumerate}
\item Take a rooted unlabelled tree $t_0$ having $\ell$ leaves and no nodes of arity 1, with 
$\ell+1\leq |t_0|\leq 2\ell-2$;
\item add to each internal node some subtrees (with internal nodes unlabelled and leaves labelled by literals)
which are not single leaves labelled by a variable from $Y$ or its negation,
\item replace each edge by a sequence of internal (unlabelled) nodes with subtrees; 
(with internal nodes unlabelled and leaves labelled by literals) attached to them 
which are not single leaves labelled by a positive (resp. negated) variable from $Y$ if their parent is
$\lor$ (resp. $\land$);
\item choose a label ($\land$ or $\lor$) for the root and deduce the labels of all internal nodes, knowing that a node and its child cannot have the same label (stratification property);
\item finally, replace each leaf of $t_0$ by a tree rooted by $\land$ (resp. $\lor$ according to
the stratification) with at least one literal from $Y$ (resp. from the negations of $Y$) and 
no literal from the negations of $Y$ (resp. from $Y$) on the first level.
\end{enumerate}

\begin{figure}
\begin{center}
\fbox{\includegraphics[width=.35\textwidth]{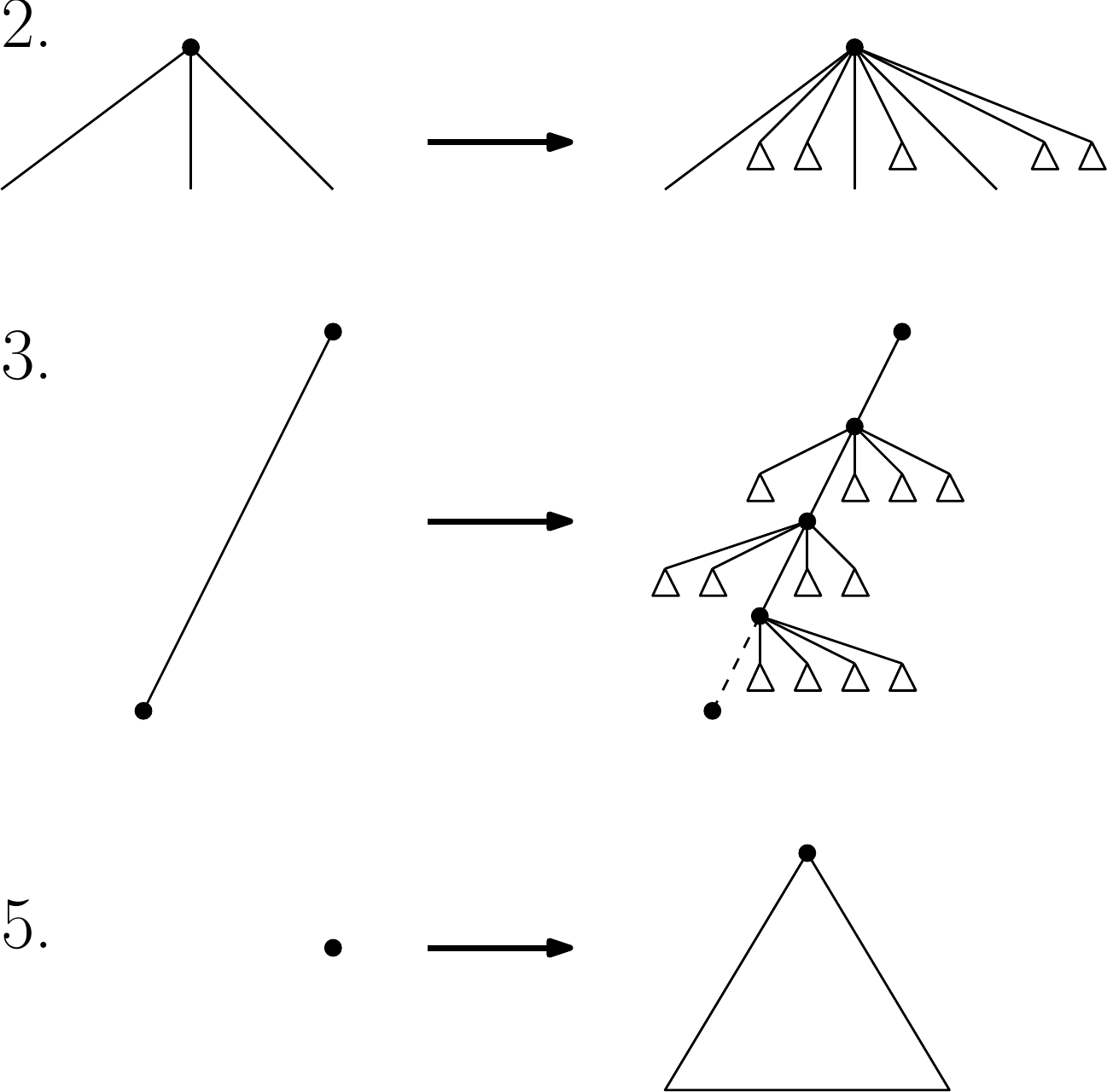}}
\end{center}
\caption{Proof of Lemma~\ref{lem:few_leaves}.}
\label{fig:construction}
\end{figure}
The obtained family contains $\mathcal{N}_{\ell}$ and its generating function is given by
\[
F(z) = 2 C_{\ell} \sum_{r=\ell}^{2\ell-3} z^{r+1-\ell} (zX(z))^{\ell} V(z)^{r} W(z)^{2r+1-\ell},
\]
where 
\begin{itemize} 
\item the index $r$ in the summation represents the number of edges of the tree $t_0$ chosen in the
construction; 
\item the factor $C_{\ell}$ is the number of choices for this $t_0$ and the factor 2 for its root 
label;
\item the factor $z^{r+1-\ell}$ marks the internal nodes of $t_0$;
\item the function
\[
zX(z) = \frac{(n-\gamma)z^2}{[1-(\hat{A}(z)-2(n-\gamma)z)][1-(\hat{A}(z)-(n-\gamma)z))]}-(n-\gamma)z^2
\]
is the generating function of the set of trees rooted by $\land$ with at least one literal from
$Y$ and no literal from the negations of $Y$ on the first level;
\item the function
\[
V(z) = \frac{1}{1-z\left(\left(\frac{1}{1-(\hat{A}(z)-(n-\gamma)z)}\right)^2-1\right)}
\]
is the generating function of the sequences of internal nodes that replace the edges of $t_0$: 
It is the generating function of sequences (possibly empty) of one internal node marked by $z$
and two sequences (one on the left and one on the right of the existing edge - that cannot be both empty)
of subtrees being different from a leaf labelled by a positive variable of $Y$; and 
\item the function 
\[
W(z) = \frac1{1-(\hat{A}-(n-\gamma)z)}
\]
is the generating function of the sequences of trees which are different from a single leaf with a label from $Y$.
Note that such sequences are attached in the 3rd step and they can be placed left from every edge
in $t_0$ ($r$ choices) or to the right of the rightmost child of any internal node of $t_0$
($r+1-\ell$ choices).  
\end{itemize}
To estimate the limiting ratio of this family, observe that the singularity of $F$ is $\rho$ and
that it is a squareroot singularity. Therefore, the limiting ratio of this family can be computed
by Lemma~\ref{lem:limiting_ratio}, \emph{i.e.}\ by $\lim_{z\to\rho}\frac{F'(z)}{A'(z)}$. 
To compute this limiting ratio, let us note that 
$\lim_{z\to\rho}\frac1{A'(z)} = 0$ such that many terms in $F'(z)/A'(z)$ can be neglected
(\emph{cf.}\ last paragraph in the proof of Proposition~\ref{prop:noleaf}). 
We obtain:
\begin{align*}
\lim_{z\to\rho}\frac{F'(z)}{A'(z)} = 
2 C_{\ell} \sum_{r=\ell+1}^{2\ell-2} \rho^{r+1} 
\Bigg(
&\ell X(\rho)^{\ell-1}V(\rho)^{r}W(\rho)^{2r+1-\ell}\lim_{z\to\rho}\frac{X'(z)}{A'(z)}\\
&+ r X(\rho)^{\ell}V(\rho)^{r-1}W(\rho)^{2r+1-\ell}\lim_{z\to\rho}\frac{V'(z)}{A'(z)}\\
&+ (2r+1-\ell) X(\rho)^{\ell}V(\rho)^{r}W(\rho)^{2r-\ell} \lim_{z\to\rho}\frac{W'(z)}{A'(z)}\Bigg).
\end{align*}
Recall that $\rho\sim \nicefrac1{2n}$ and $\hat{A}(\rho)\sim 1 - \nicefrac1{\sqrt{2n}}$, as $\nti$, and
hence $X(\rho)\sim\nicefrac13$, $V(\rho)\sim1$ and $W(\rho)\sim2$, as $\nti$. Moreover,
\[
\lim_{z\to\rho}\frac{W'(z)}{A'(z)} 
= \lim_{z\to\rho}\frac{\hat{A}'(z)-(n-\gamma)}{A'(z)}\frac1{(1-(\hat{A}(\rho)-(n-\gamma)\rho))^2} = \frac1{2(1-(\hat{A}(\rho)-(n-\gamma)\rho))^2},
\]
because $\lim_{z\to\rho}\frac{\hat{A}'(z)}{A'(z)} = \frac12$ and
$\lim_{z\to\rho}\frac{n-\gamma}{A'(z)} = 0$. Thus, $\lim_{n\to\infty} \lim_{z\to\rho}
\frac{W'(z)}{A'(z)} = 2$. Using similar arguments, we can prove that
\[
\lim_{\nti}\lim_{z\to\rho}\frac{V'(z)}{A'(z)} = 8 \quad \text{ and }\quad
\lim_{z\to\rho}\frac{X'(z)}{A'(z)}\sim \frac{8n}{9}, \text{ as }\nti.
\]
All these relations imply
\[
\lim_{z\to\rho}\frac{F'(z)}{A'(z)} \sim \frac{\kappa}{n^{\ell+1}}, \text{ as }\nti, 
\]
where $\kappa$ is a positive constant. 
\end{proof}

We are now ready to prove Proposition~\ref{prop:irreductibles}. The two previous lemmas allow us
to consider only irreducible trees in which the simplification process cuts at least one non-leaf
tree and fewer than $L(f)$ single leaves.
Let us denote by $\mathcal{I}_1$ the set of irreducible trees for which the simplified tree
$t^{\star}$ has size $L(f)$, and by $\mathcal{I}_2$ the set of irreducible trees $t$ of $f$ such
that $t^{\star}$ has size at least $L(f)+1$.

\begin{lem}
\label{lem:1}
We have the following asymptotic result:
$E^{*}(\mathcal{I}_1) = o\left(\nicefrac1{n^{L(f)}}\right)$.
\end{lem}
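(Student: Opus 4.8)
We must show $\mu_n\big(E^{*}(\mathcal I_1)\big)=o\!\left(n^{-L(f)}\right)$. Recall that for $t\in\mathcal I_1$ the simplification process (set every leaf labelled by an inessential variable of $f$ to $\True$, then prune) yields a \emph{minimal} tree $t^\star\in\mathcal M_f$ of size exactly $L(f)$, and that, by the two preceding lemmas, we may restrict attention to the $t$ for which this process cuts \emph{at least one} non-leaf subtree and \emph{fewer than} $L(f)$ single leaves. Fix one cut non-leaf subtree $P$: it hangs off an $\land$-node $\mu$ (the $\lor$-node case being dual), it is $\lor$-rooted, and it collapses to $\True$ once all inessential variables are set to $\True$. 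Since $t$ is irreducible, $P$ cannot itself be a tautology.

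The first step is to describe every tree of $E^{*}(\mathcal I_1)$ by grafting onto a minimal tree $t^\star$ of $f$: (i) at most $L(f)$ single leaves labelled by inessential variables; (ii) arbitrarily many tautology/contradiction subtrees produced by the $E^{*}$-operation; (iii) at least one non-tautological, inessentially collapsing subtree $P$ as above. Translating this grafting into generating functions leads to an estimate of the form
\[\mu_n\big(E^{*}(\mathcal I_1)\big)\ \le\ C\,\rho^{L(f)}\,(1+o(1))\,\mu_n(\mathcal P),\]
where $C=C(f)$ is a constant, the factor $(1+o(1))$ absorbs the grafting of the inessential single leaves (a polynomial in $(n-\gamma)z$, bounded at $z=\rho$ because $(n-\gamma)\rho=O(1)$) and of the tautology subtrees (a factor $\prod(1-T(z))^{-c}=1+O(1/\sqrt n)$ at $\rho$, exactly as in the proof of Proposition~\ref{prop:several_exp}), and $\mu_n(\mathcal P)$ is the limiting ratio of the family $\mathcal P$ of $\lor$-rooted trees eligible to play the role of $P$.

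The crux is to prove $\mu_n(\mathcal P)=o(1)$. The point is that $P$ cannot be attached freely. Since $t$ computes $f$ and $t^\star$ is minimal, there is an assignment under which the node $\mu$ of $t^\star$ evaluates to $\True$ and is \emph{relevant} to the output of $t^\star$ (minimality rules out $\mu$ being always irrelevant, and an $\land$-node can always be forced to $\False$, so the set of such assignments is neither empty nor everything). Comparing the values of $t$ and of $f$ at all these assignments forces $P$ to compute a Boolean function \emph{larger than} a \emph{fixed} non-constant function $h$ (defined on the finitely many variables occurring in $t^\star$ and then extended naturally), and $P$ is not a tautology. Hence $\mathcal P$ is contained in the family of $\lor$-rooted trees computing a non-constant function $\ge h$; by Proposition~\ref{prop:comparison} and $\mathbb P_n(\True)=O(1)$, this family has limiting ratio $\mathbb P(F\ge h)-\mathbb P_n(\True)=\mathbb P_n(\True)\cdot o(1)=o(1)$. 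Since there are only finitely many choices of $(t^\star,\mu)$, one takes the worst $h$. Combining with the display above gives $\mu_n\big(E^{*}(\mathcal I_1)\big)=O\!\left(\rho^{L(f)}\right)\cdot o(1)=o\!\left(n^{-L(f)}\right)$.

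I expect the main obstacle to be this last structural point: exhibiting the fixed non-constant $h$ and checking that the presence inside $t$ of the minimal core, of the grafted single inessential leaves, and of the grafted tautology subtrees does not spoil the comparison of $t$ with $f$ at the relevant assignments. One must also treat with some bookkeeping the cases where several non-leaf subtrees are cut simultaneously, or where a single inessential leaf or a tautology subtree lies inside a cut subtree; such refinements affect the constant $C$ and the hidden $o(1)$, but not the exponent of $n$, so the conclusion is unchanged.
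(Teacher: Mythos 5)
Your overall architecture (graft onto a size-$L(f)$ core, isolate a cut non-leaf subtree $P$, show $P$ lies in a family of limiting ratio $o(1)$, and multiply by $\rho^{L(f)}$) matches the paper's, but your central structural claim --- that some cut subtree $P$ must compute a non-tautological function bounded below by a fixed non-constant $h$ built from the value and pivotality of its attachment node $\mu$ in $t^{\star}$ --- is not established, and it is in fact false in general. The constraint ``$P(\sigma)=\True$ whenever $h(\sigma)=\True$'' only follows if the value of $\mu$ propagates to the root of $t$ at $\sigma$; but the \emph{other} grafted pieces can mask this propagation. Concretely, if a second cut ($\land$-rooted) subtree sits at an $\lor$-node between $\mu$ and the root and happens to evaluate to $\True$ at $\sigma$, or if a grafted single leaf labelled by an inessential variable $y$ sits at an $\land$-node on that path and $\sigma(y)=\False$, then $t(\sigma)=f(\sigma)$ holds regardless of $P(\sigma)$. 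So when several non-leaf subtrees are cut, no individual one need satisfy $P\geq h$, and your displayed inequality $\mu_n(E^{*}(\mathcal I_1))\leq C\rho^{L(f)}(1+o(1))\mu_n(\mathcal P)$ --- which presupposes that every tree of $E^{*}(\mathcal I_1)$ contains an eligible $P$ --- breaks down. This is not a matter of constants: a different mechanism is needed there. Even in the single-cut-subtree case, the masking by grafted inessential leaves means the forced constraint is only $P\geq h\land(\text{a conjunction of at most }L(f)\text{ inessential literals})$, with pivotality computed in $t$ rather than in $t^{\star}$; this is repairable (restrict to the subcube where the grafted leaves are neutral, note there are finitely many shapes of the resulting $h'$ so Proposition~\ref{prop:comparison} applies uniformly), but it is exactly the point you defer to ``bookkeeping.''

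The paper resolves both issues by a case split on the number $q$ of cut non-leaf subtrees. For $q\geq 2$ no structural constraint is needed at all: two extra non-leaf factors already contribute $\hat B(\rho)^{2}=\mathcal{O}(\nicefrac1n)$ to the limiting ratio, which beats $\rho^{L(f)}$ by the required margin --- this would also be the one-line fix for your argument. For $q=1$ the paper proves a weaker but checkable structural property: if the unique cut ($\lor$-rooted) subtree has no essential variable on its first level and does not lie in $\bigcup_{k,\ell}\bigcup_{r<\ell}\mathcal M^{\Gamma_f}_{k,\ell,r}$, then some assignment of the \emph{inessential} variables forces it to compute $\False$, which would allow cutting its $\land$-parent as well and produce a tree of size $<L(f)$ computing $f$ --- a contradiction. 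Hence $P\in\mathcal A_{\Gamma_f}\cup\bigcup\mathcal M^{\Gamma_f}_{k,\ell,r}$, a family of limiting ratio $\mathcal{O}(n^{-\nicefrac14})$ by Proposition~\ref{prop:A_alpha} and Lemma~\ref{lem:Mklr}; this plays the role of your $\mu_n(\mathcal P)=o(1)$ and is immune to the masking problem because it uses an explicit assignment argument rather than a pointwise comparison with $f$. Your route via Proposition~\ref{prop:comparison} is attractive and could probably be completed for $q=1$ along the lines sketched above, but as written the lemma is not proved.
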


\begin{proof}
Let $t\in\mathcal{I}_1$. To obtain $t^{\star}$, we have cut subtrees of $t$ and we can assume that
we have cut at least one large subtree and at most $L(f)$ single leaves. Assume first that we have
cut only one non-leaf subtree rooted at a node $\nu$ during the algorithm. Then, either this tree
contains an essential variable on its first-level leaves, or it belongs to $\bigcup_{k,\ell\geq
0}\bigcup_{r=0}^{\ell-1} \mathcal{M}^{\Gamma_f}_{k,\ell,r}$, where $\Gamma_f$ is the set of essential variables of $f$. 
Otherwise, we could find an assignment of inessential
variables such that we can cut the father of $\nu$ without changing the function computed by the
tree $t$. This new assignment of inessential variables leads to a different simplification of the
tree $t$ that will cut at least the single leaves that were cut before, plus the larger subtree
and its father: We thus obtain a tree of size less than $L(f)$ that computes $f$, 
which is impossible.

Any tree of $\mathcal{I}_1$ is obtained by expanding a tree $s$ of $\mathcal{S}_{L(f)}$ as follows:
\begin{itemize}
\item choose an integer $q\geq 1$ ($q$ represents the number of large trees that were cut during
the process described beforehand: $q\geq 1$ holds because of the remark before Lemma~\ref{lem:1}), 
\item if $q=1$, plug a tree from $\left(\bigcup_{k,\ell\geq
0}\bigcup_{r=0}^{\ell-1} \mathcal{M}^{\Gamma_f}_{k,\ell,r}\right) \cup \mathcal{A}_{\Gamma_f}$ at a
node of $s$ and at most $L(f)$ inessential leaves at other nodes of $s$ (where $\mathcal{A}_{\Gamma_f}$ 
is the set of trees containing at least one first-level leaf labelled by an
essential variable of $f$, \emph{cf.}\ Proposition~\ref{prop:A_alpha}),
\item else plug $q\geq 2$ non-leaf subtrees and at most $L(f)$ inessential leaves at nodes of $s$.
\end{itemize}
We are interested in expansions of trees from $\mathcal{I}_1$. In fact, since we do not impose any
restrictions on the trees we plug at nodes of $s$, we can consider only expansions in the nodes of
$s$, since expansions in the plugged trees are then already counted. 

The generating function of trees obtained by successive expansions of trees from $\mathcal{I}_1$,
denoted by $I_1(z)$ thus satisfies:
\begin{align*}
I_1(z) \prec S_{L(f)}z^{L(f)} \sum_{k\geq 0} \Bigg(
&\begin{pmatrix}3L(f)+1+k\\L(f)+1,k,2L(f)\end{pmatrix} M(z)(2(n-\gamma)z)^{L(f)} \hat{B}(z)^k \\
&+ \sum_{q\geq 2} \begin{pmatrix}3L(f)+q+k\\L(f)+q, k,2L(f)\end{pmatrix} \hat{B}(z)^{q+k} (2(n-\gamma)z)^{L(f)}\Bigg),
\end{align*}
where $k$ counts the number of successive expansions done into the irreducible tree, and where
$M(z)$ is the generating function of $\left(\bigcup_{k,\ell\geq
0}\bigcup_{r=0}^{\ell-1} \mathcal{M}^{\Gamma_f}_{k,\ell,r}\right) \cup \mathcal{A}_{\gamma}$. The
multinomial coefficient represents the number of choices for the places where we plug trees in $s$
and where we then do the expansions (the orders of the ``pluggings'' and of the expansions do not
matter, but expansions are done after the ``pluggings'').

The coefficient $\begin{pmatrix}3L(f)+1+k\\L(f)+1,k,2L(f)\end{pmatrix}$ counts the different ways to plug one tree from
$\left(\bigcup_{k,\ell\geq
0}\bigcup_{r=0}^{\ell-1} \mathcal{M}^{\Gamma_f}_{k,\ell,r}\right) \cup \mathcal{A}_{\gamma}$, $L(f)$ inessential leaves and $k$ $\hat B$-expansions into $s\in\mathcal S_{L(f)}$. Let us start from $s$, the first plugging can be done at the left of each edge of $s$, or at the right of every rightmost edge, or at every leaf. Note that $s\in\mathcal S_{L(f)}$ has at most $L(f)$ leaves and $L(f)$ edges. The number of different places for the first plugging is thus at most $3L(f)$. The second plugging can then be made in $3L(f)+1$ different places, and so on, and so forth. The fact that we first do the $L(f)+1$ plugging and then the $k$ expansions gives the multinomial coefficient.
The second binomial coefficient is given by the same reasonning with $L(f)+q$ pluggings and $k$ expansions.

Thanks to Proposition~\ref{prop:A_alpha} and Section~\ref{part:useful}, we know that the limiting
ratio of $\left(\bigcup_{k,\ell\geq
0}\bigcup_{r=0}^{\ell-1} \mathcal{M}^{\Gamma_f}_{k,\ell,r}\right) \cup \mathcal{A}_{\Gamma_f}$ 
has order $\mathcal{O}\left(\frac1{\sqrt{n}}\right)$ and that $M(\rho)\leq \hat{B}(\rho)\sim\frac1{\sqrt{2n}}$. 
Thus, when $n$ tends to infinity,
\begin{align}
\mu_n(E^*(\mathcal{I}_1))
\leq    & S_{L(f)}\rho^{L(f)}\hat{B}(\rho)\sum_{k\geq 0}
            \Bigg(
            \begin{pmatrix}3L(f)+1+k\\L(f)+1,k,2L(f)\end{pmatrix} \left(\hat{B}(\rho)^{k} + \frac{k}{2} \hat{B}(\rho)^{k-1}\right)\notag\\
        &\hspace{3cm}   +\sum_{q\geq 2} \begin{pmatrix}3L(f)+q+k\\L(f)+q,k,2L(f)\end{pmatrix} \frac{q+k}{2}\hat{B}(\rho)^{q+k-2}\Bigg).\label{eq:somme}
\end{align}
The first factor $S_{L(f)}\rho^{L(f)}\hat{B}(\rho)$ behaves as $\frac1{n^{L(f)+\nicefrac12}}$ when $n$ tends to infinity. Let us prove that the second term of the sum behaves as $\mathcal{O}(1)$ when $n$ tends to infinity.
Let us first focus on
\begin{align*}
\sum_{k\geq 0}\begin{pmatrix}3L(f)+1+k\\L(f)+1,k,2L(f)\end{pmatrix} \hat{B}(\rho)^{k}
&= \frac{(3L(f)+1)!}{(L(f)+1)!(2L(f))!} \sum_{k\geq 0}\begin{pmatrix}3L(f)+1+k\\k\end{pmatrix} \hat{B}(\rho)^{k}\\
&= \frac{(2L(f)+2)!}{(L(f)+1)!(2L(f))!} \frac1{(1-\hat{B}(\rho))^{3L(f)+2}}
\end{align*}
in view of \eqref{eq:binome}.
Very similar calculations lead to
\[\sum_{k\geq 0}\begin{pmatrix}3L(f)+1+k\\L(f)+1,k\end{pmatrix}\frac{k}{2} \hat{B}(\rho)^{k-1}
= \frac{(3L(f)+1)!}{2(3L(f)+1)!(2L(f)!)}\frac{3L(f)+2}{(1-\hat{B})^{3L(f)+3}}.\]
Moreover, using~\eqref{eq:binome} again,
\begin{align*}
\sum_{k\geq 0}\sum_{q\geq 2}& \begin{pmatrix}3L(f)+q+k\\L(f)+q,k,2L(f)\end{pmatrix} \frac{q}{2}\hat{B}(\rho)^{q+k-2}\\
&= \sum_{q\geq 2}\frac{(3L(f)+q)!}{(L(f)+q)!(2L(f))!}\frac{q}{2}\hat{B}(\rho)^{q-2} \sum_{k\geq 0} \begin{pmatrix}3L(f)+q+k\\k\end{pmatrix}\hat{B}(\rho)^{k}\\
&= \frac{1}{2(1-\hat{B}(\rho))^{3L(f)+3}}\sum_{q\geq 0}\begin{pmatrix}3L(f)+q+2\\L(f)+q+2\end{pmatrix}(q+2)\left(\frac{\hat{B}(\rho)}{(1-\hat{B}(\rho))}\right)^{q}\\
&= \frac{1}{2(1-\hat{B}(\rho))^{3L(f)+3}}\sum_{q\geq 0}
\left[\prod_{j=2}^{L(f)+2}\left(\frac{2L(f)+q+j}{q+j}\right)\right]
\begin{pmatrix}2L(f)+q+1\\q+1\end{pmatrix}(q+2)\left(\frac{\hat{B}(\rho)}{1-\hat{B}(\rho)}\right)^{q}\\
&\leq \frac{(1+2L(f))^{L(f)+1}}{2(1-\hat{B}(\rho))^{3L(f)+3}}\sum_{q\geq 0}
\begin{pmatrix}2L(f)+q+1\\q+1\end{pmatrix}(q+2)\left(\frac{\hat{B}(\rho)}{1-\hat{B}(\rho)}\right)^{q}\\
&= \frac{(1+2L(f))^{L(f)+1}}{2(1-\hat{B}(\rho))^{3L(f)+3}} \left(\frac{2L(f)+1}{\left(1-\frac{\hat{B}(\rho)}{1-\hat{B}(\rho)}\right)^{2L(f)+2}} + \frac1{\left(1-\frac{\hat{B}(\rho)}{1-\hat{B}(\rho)}\right)^{2L(f)+1}}\right),
\end{align*}
since $\frac{\hat{B}(\rho)}{1-\hat{B}(\rho)}$ is smaller than $1$ for large enough $n$. Similar calculations can be done for the last term of the sum~\eqref{eq:somme}, and we eventually get
\[\mu_n(E^*(\mathcal{I}_1)) = \mathcal{O}\left(\frac{1}{n^{L(f)+\nicefrac12}}\right).\qedhere\]
\end{proof}

\begin{lem}
\label{lem:2}
We have the following asymptotic result:
$E^{*}(\mathcal{I}_2) = o\left(\nicefrac1{n^{L(f)}}\right).$
\end{lem}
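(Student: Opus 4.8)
The plan is to mimic the strategy of Lemma~\ref{lem:1}, but now handling the case where the simplified tree $t^\star$ has size at least $L(f)+1$. The key gain over $\mathcal{I}_1$ is that the "skeleton" from which we expand is larger, so the base factor will be $S_\ell\,\rho^\ell$ for some $\ell\ge L(f)+1$ rather than $S_{L(f)}\rho^{L(f)}$; since $\rho\sim 1/(2n)$, each extra unit of size in the skeleton costs a factor of order $1/n$, which is exactly what we need to beat $1/n^{L(f)}$. First I would recall from Lemmas of the preceding discussion that we may restrict to irreducible trees in which the simplification process cuts at least one non-leaf subtree and fewer than $L(f)$ single leaves (the complementary families are negligible by the two lemmas on $\mathcal{N}_\gamma$ and $\mathcal{N}_\ell$). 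So any $t\in\mathcal{I}_2$ is obtained by: taking a skeleton $s\in\mathcal{S}_\ell$ with $\ell\ge L(f)+1$; plugging at its nodes at least one subtree from $\left(\bigcup_{k,\ell'\ge 0}\bigcup_{r=0}^{\ell'-1}\mathcal{M}^{\Gamma_f}_{k,\ell',r}\right)\cup \mathcal{A}_{\Gamma_f}$ (the cut non-leaf subtrees, arguing exactly as in Lemma~\ref{lem:1} that a cut non-leaf subtree either contains an essential variable on its first level or lies in $\bigcup\mathcal{M}^{\Gamma_f}_{k,\ell',r}$, else the tree would not be minimal-respecting); plus at most $L(f)$ inessential single leaves; and finally performing $k\ge 0$ further $\hat B$-expansions.

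Next I would write down the generating function $I_2(z)$ as a $\prec$-bound, in complete analogy with the display for $I_1(z)$ in the proof of Lemma~\ref{lem:1}, but with an extra summation over $\ell\ge L(f)+1$ weighted by $S_\ell z^\ell$ and with multinomial coefficients of the form $\binom{3\ell+q+k}{\ell+q,\,k,\,2\ell}$ governing the number of places in $s$ where the $q$ pluggings, the $\le L(f)$ inessential leaves, and the $k$ expansions are inserted. Applying Lemma~\ref{lem:limiting_ratio} (the singularity is again $\rho$, of square-root type, since everything is built from $\hat B$, $\hat A$ and the auxiliary functions already shown to have this property) and using $M(\rho)\le\hat B(\rho)\sim 1/\sqrt{2n}$ together with $\mu_n$ of the plugged family being $\mathcal{O}(1/\sqrt n)$, the same geometric-series manipulations via Equation~\eqref{eq:binome} that appear in Lemma~\ref{lem:1} show that, for each fixed $\ell$, the contribution is $\mathcal{O}\!\left(S_\ell\,\rho^\ell\,\hat B(\rho)\right)=\mathcal{O}\!\left(n^{-\ell-1/2}\right)$ with a constant that grows only polynomially in $\ell$ (the multinomial sums contribute a factor bounded by $(c\ell)^{c\ell}$-type expressions which are dominated once multiplied by $\rho^\ell$ and $\hat B(\rho)$, since $\rho\hat B(\rho)\to 0$).

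The last step is to sum over $\ell\ge L(f)+1$. Because $S_\ell$ grows at most like $C^\ell$ for some constant $C$ depending only on $\gamma$ (the number of stratified labelled trees of size $\ell$ on a bounded alphabet has exponential growth rate equal to $1/\rho_{\mathcal S}$, where $\rho_{\mathcal S}$ is a fixed positive number independent of $n$), and because $\rho\sim 1/(2n)\to 0$, for $n$ large enough we have $S_\ell\rho^\ell \le (C\rho)^\ell \le 2^{-\ell}$ uniformly, so the tail $\sum_{\ell\ge L(f)+1}S_\ell\rho^\ell\,\hat B(\rho)\cdot\text{poly}(\ell)$ converges and is dominated by its first term, giving
\[
\mu_n\!\left(E^*(\mathcal{I}_2)\right)=\mathcal{O}\!\left(\frac{1}{n^{L(f)+1/2}}\right)=o\!\left(\frac{1}{n^{L(f)}}\right),
\]
as claimed. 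The main obstacle I anticipate is the bookkeeping in the multinomial coefficients: one must check that the combinatorial factors counting the insertion positions in $s$, after being multiplied by the geometrically small quantities $\rho^\ell$ and powers of $\hat B(\rho)$, do not overwhelm the $n^{-\ell-1/2}$ decay — i.e. that the "per-$\ell$" constants are at worst polynomial in $\ell$ (not super-exponential) so that the sum over $\ell$ converges. This is exactly the same type of estimate as in Lemma~\ref{lem:1}, just carried out uniformly in $\ell$, so it is routine but requires care.
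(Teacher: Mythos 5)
Your proposal follows essentially the same route as the paper's proof: decompose $\mathcal{I}_2$ over skeletons $s\in\mathcal{S}_\ell$ with $\ell\geq L(f)+1$, bound the generating function by $\sum_{\ell\geq L(f)+1}S_\ell z^\ell$ times multinomial factors counting the pluggings, inessential leaves and expansions, and observe that the extra skeleton node supplies the extra factor $\rho\sim\nicefrac1{2n}$ needed to beat $n^{-L(f)}$. Two cosmetic blemishes: your stated exponent $L(f)+\nicefrac12$ should be $L(f)+\nicefrac32$ given your own per-$\ell$ estimate $\mathcal{O}(n^{-\ell-\nicefrac12})$ (the paper obtains $\mathcal{O}(n^{-L(f)-1})$ by allowing $q\geq 0$), and a genuinely super-exponential per-$\ell$ constant of type $(c\ell)^{c\ell}$ would \emph{not} be absorbed by $\rho^\ell$ at fixed $n$ --- fortunately the multinomial sums grow only exponentially in $\ell$, so the geometric decay of $S_\ell\rho^\ell$ wins and the conclusion stands.
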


\begin{proof}
The generating function $I_2(z)$ of trees obtained by successive expansions of a tree from
$\mathcal{I}_2$ satisfies
\[I_2(z)\leq \sum_{\ell\geq L(f)+1} S_{\ell} z^{\ell} \sum_{k\geq 0} \sum_{q\geq 0}\begin{pmatrix}2\ell+L(f)+q+k\\L(f)+q,k,2\ell\end{pmatrix}\hat{B}(z)^{k+q} (2(n-\gamma)z)^{L(f)}\]
where $S_{\ell}$ is the number of trees, where $\ell$ is the size of the seminal tree in $\mathcal I_2$, and where the multinomial coefficient counts the numbers of different ways to plug first $L(f)$ leaves and $q$ non-leaf trees and then $k$ expansions in this seminal tree.
Thus, calculations in the same vein as those done in the proof of Lemma~\ref{lem:1} yield
\begin{align*}
\mu_n(E^*(\mathcal{I}_2))
&\leq \sum_{\ell\geq L(f)+1} S_{\ell}\rho^{\ell}\sum_{k\geq 0}\sum_{q\geq 0}\begin{pmatrix}2\ell +L(f)+q+k\\L(f)+q,k,2\ell\end{pmatrix}\frac{k+q}{2} \frac1{\sqrt{n}^{k+q-1}}\\
&=\mathcal{O}\left(\frac1{n^{L(f)+1}}\right).\qedhere
\end{align*}
\end{proof}

Lemmas~\ref{lem:1} and~\ref{lem:2} directly induce Proposition~\ref{prop:irreductibles}, and we are now able to complete the proof of Theorem~\ref{thm:general}.

\begin{proof}[Proof of Theorem~\ref{thm:general}]
The probabilities of the set of tautologies and that of literals are treated in
Sections~\ref{part:tauto} and~\ref{part:simple_x}, respectively. We now have to prove that for all Boolean function $f$,
$\mathbb{P}_n(f) = \Theta\left(\nicefrac1{n^{L(f)}}\right)$. 
In view of Proposition~\ref{prop:several_exp}, we have 
$\mathbb{P}_n(f) = \Omega\left(\nicefrac1{n^{L(f)}}\right)$, 
and \eqref{eq:irr} together with Proposition~\ref{prop:irreductibles} gives
$\mathbb{P}_n(f) = \mathcal{O}\left(\nicefrac1{n^{L(f)}}\right)$
which completes the proof of Theorem~\ref{thm:general}.
\end{proof}

\section{Conclusion and further work}

This article presents the first attempt to discuss the size definition in quantitative logics:
while the formula size is commonly used in the literature,
we believe that in the case of non binary trees, 
the tree size considered in this article is at least as natural.

We proved that this change of size/complexity notion does not affect the first order behaviour 
of the distribution induced on the set of Boolean functions by the uniform distribution on and/or
trees of a given (large) size and labelled on $n$ variables.
However, we have also exhibited how the large typical and/or tree has a very different 
shape when changing the size notion. Trees having more leaves are more and more likely 
as the number $n$ of variables increases. 
This change of typical shape has forced us to develop original, 
more intricate proofs for this new model.

Note that our main result only gives a $\Theta$--asymptotic whereas for the formula size associative trees model (see~\cite{GGKM15})
a stronger equivalent result can be obtained. We strongly believe that the following holds: for all Boolean function, there exists a constant $\lambda_f>0$ such that, when $n\to +\infty$, 
\[\mathbb P_n (f) \sim \frac{\lambda_f}{n^{L(f)}},\]
but this requires a much deeper understanding of the structure of tautologies 
and certainly even more technical calculations.

Considering non binary tree is a way to take into account the associativity of the logical connectives $\land$ and $\lor$.
Note that there is no reason - except technicality - justifying that we consider plane trees instead of non plane trees while the connectives $\land$ and $\lor$ are commutative.
Non plane associative trees have been studied in the formula size model (see~\cite{GGKM15}),
and we believe that the same could be done for the tree size model, although once again,
the technical level of the computations would considerably increase since in non plane models, 
the generating functions are not known explicitly.

\bibliographystyle{plain} 
\bibliography{boolean}

\end{document}